\newtheorem{theorem}{Theorem}[section]
\newtheorem{lemma}[theorem]{Lemma}
\newtheorem{proposition}{Proposition}[section]
\theoremstyle{definition}
\newtheorem{definition}[theorem]{Definition}
\newtheorem{example}[theorem]{Example}
\theoremstyle{remark}
\numberwithin{equation}{section}
\newcommand{\ud}{\,\mathrm{d}}
\newcommand{\p}{\ensuremath{\partial}}
\newcommand{\n}{\ensuremath{\nonumber}}
\newcommand{\eps}{\ensuremath{\varepsilon}}
\newcommand\be{\begin{equation}}
\newcommand\ee{\end{equation}}
\newcommand\bea{\begin{eqnarray}}
\newcommand\eea{\end{eqnarray}}
\newcommand\bi{\begin{itemize}}
\newcommand\ei{\end{itemize}}
\newcommand\ben{\begin{enumerate}}
\newcommand\bena{\begin{enumerate}[(a)]}
\newcommand\een{\end{enumerate}}
\newcommand\bp{\begin{proof}}
\newcommand\ep{\end{proof}}
\title{Higher Regularity Theory for a \\ Mixed-Type Parabolic Equation}
\author{Sameer Iyer \footnote{Department of Mathematics, University of California, Davis, Davis, CA 95616, \url{sameer@math.ucdavis.edu}} \qquad Nader Masmoudi \footnote{NYUAD Research Institute, New York University Abu Dhabi, PO Box 129188, Abu Dhabi, United Arab Emirates. \\ \footnotesize \normalfont
 Courant Institute of Mathematical Sciences, New York University, 251 Mercer Street, New York, NY 10012, USA.  \url{masmoudi@cims.nyu.edu}}}
\tikzset{
    %Define standard arrow tip
    >=stealth',
    %Define style for boxes
    punkt/.style={
           rectangle,
           rounded corners,
           draw=black, very thick,
           text width=6.5em,
           minimum height=2em,
           text centered},
    % Define arrow style
    pil/.style={
           ->,
           thick,
           shorten <=2pt,
           shorten >=2pt,}
}
\begin{document}

\maketitle

\begin{abstract} In this paper, we study the higher regularity theory of a mixed-type parabolic problem. We extend the recent work of \cite{DMR} to construct solutions that have an arbitrary number of derivatives in Sobolev spaces. To achieve this, we introduce a counting argument based on a quantity called the ``degree".  In the second part of this paper, we apply this existence theory to the Prandtl system near the classical Falkner-Skan self-similar profiles in order to supplement the stability analysis of \cite{IM22} with a rigorous construction argument.
\end{abstract}

\setcounter{tocdepth}{2}
\tableofcontents

\section{Introduction}

\subsection{Setup and Main Results for Burgers}

In this work, we are interested in two problems: the Airy equation and the stationary Prandtl system. The Airy equation reads 
\begin{subequations} \label{modelpb0}
\begin{align}
&\rho \p_t \Omega - \p_\rho^2 \Omega = G, \qquad (t, \rho) \in (0, 1) \times \mathbb{R}\\
&\Omega|_{t = 0} = \Xi_{L}, \qquad \rho > 0 \\
&\Omega|_{t = 1} = \Xi_{R}, \qquad \rho < 0, \\
&\Omega|_{\rho \rightarrow \pm \infty} = 0. 
\end{align}
\end{subequations}
Above, the source term $G$ will be considered an abstract source term. The functions $(\Xi_L(\cdot), \Xi_R(\cdot))$ are the given data. Such a problem is of ``mixed-type": for $\rho > 0$, the equation is forward parabolic ($+t$ is the timelike direction). However, when $\rho < 0$ the equation is backwards parabolic ($-t$ is the timelike direction). The given data reflects this due to the fact that $\Xi_L$ is prescribed at $t = 0, \rho > 0$ to initiate the forward evolution, whereas $\Xi_R$ is prescribed at $t = 1, \rho < 0$ to initiate the backwards evolution.

This model has strong solutions that are $C^1_{\rho}$ across the ``interface", $\{\rho = 0\}$. However, one cannot expect to \textit{generically} upgrade these strong solutions in such a way that $\p_t \Omega$ is also $C^1_{\rho}$ across $\{\rho = 0\}$ (even by prescribing smooth data). This observation was made in the very interesting recent work of \cite{DMR}, which we use as a starting point in the current paper. Indeed, the analysis of \cite{DMR} informally states that one needs to ``project away" from two directions of instability in order to take one $\p_t$: 
\begin{theorem}[Dalibard-Marbach-Rax, \cite{DMR}, Informal Statement] There exists a codimension-two manifold of $(\Xi_L, \Xi_R, G)$ which guarantees that the unique strong solution, $\Omega$ to \eqref{modelpb0} enjoys enhanced $\p_t$ regularity: $\Omega \in H^1((0, 1), H^1(\mathbb{R}))$.
\end{theorem}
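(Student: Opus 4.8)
The strategy is to differentiate \eqref{modelpb0} in $t$ and observe that $\Theta:=\p_t\Omega$ solves a problem of \emph{exactly the same mixed type}, then to read off the data that is implicitly imposed on $\Theta$ at the two parabolic boundaries and to identify the two obstructions to its admissibility. Granting the strong solution theory asserted above (a unique strong solution, $C^1$ in $\rho$ across $\{\rho=0\}$, lying in a suitable weighted Sobolev class), one differentiates in $t$: $\Theta=\p_t\Omega$ formally satisfies $\rho\,\p_t\Theta-\p_\rho^2\Theta=\p_t G$ on $(0,1)\times\R$ with $\Theta|_{\rho\to\pm\infty}=0$. The forward branch of this problem needs data for $\Theta$ at $t=0$ on $\{\rho>0\}$ and the backward branch needs data at $t=1$ on $\{\rho<0\}$; neither is prescribed directly, but both are \emph{forced} by \eqref{modelpb0}: evaluating the original equation at $t=0$ and at $t=1$ and solving for $\p_t\Omega$ (legitimate precisely on the half-lines where the data lives) gives
\[
\Theta|_{t=0}=\frac{\p_\rho^2\Xi_L+G|_{t=0}}{\rho}\ \ (\rho>0),\qquad
\Theta|_{t=1}=\frac{\p_\rho^2\Xi_R+G|_{t=1}}{\rho}\ \ (\rho<0).
\]

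These right-hand sides carry a $1/\rho$ singularity at the interface, so for the induced data to lie in the space to which the base theory applies I would impose the two scalar conditions
\[
\big(\p_\rho^2\Xi_L+G|_{t=0}\big)\big|_{\rho=0^+}=0,\qquad
\big(\p_\rho^2\Xi_R+G|_{t=1}\big)\big|_{\rho=0^-}=0 .
\]
They are evaluated at different time slices and involve disjoint pieces of the data, hence are independent affine constraints and cut out a codimension-two (affine) submanifold $\mathcal{M}$ of admissible triples $(\Xi_L,\Xi_R,G)$. On $\mathcal{M}$, a Hardy-type inequality shows that dividing the numerators by $\rho$ costs only one $\rho$-derivative near the interface (the behaviour as $\rho\to\pm\infty$ being harmless, since $1/\rho$ decays there and $G$ is assumed to decay), so the induced data for $\Theta$ is admissible.

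I would then apply the base well-posedness to the $\Theta$-problem to obtain a strong solution $\widetilde\Theta$, and identify $\widetilde\Theta=\p_t\Omega$ by a uniqueness argument: set $\widehat\Omega(t,\cdot):=\Xi_L+\int_0^t\widetilde\Theta(s)\ud s$ on $\{\rho>0\}$ (and the analogous backward primitive from $t=1$ on $\{\rho<0\}$), then check, using the equation for $\widetilde\Theta$ and the definition of its endpoint data, that $\widehat\Omega$ is a strong solution of \eqref{modelpb0} with the same $(\Xi_L,\Xi_R,G)$, so $\widehat\Omega=\Omega$. Hence $\p_t\Omega=\widetilde\Theta$ lies in the base solution class, in particular in $L^2((0,1);H^1(\R))$; combined with $\Omega\in L^2((0,1);H^1(\R))$ from the base theory, this gives $\Omega\in H^1((0,1);H^1(\R))$.

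The main difficulty is choosing the functional setting so that these steps fit together: the base theory must be run in a weighted Sobolev scale that is (a) strong enough to differentiate a solution in $t$ and evaluate its equation at the endpoints $t=0,1$, and (b) \emph{self-reproducing} under the map ``divide the endpoint residual by $\rho$'' once the codimension-two vanishing holds, so that the induced data for $\Theta$ is again admissible for the \emph{same} theory. Pinning down such a scale, and proving the accompanying Hardy-type estimates with the correct weights both at $\rho=0$ and at $\rho=\pm\infty$, is where the real work lies; the identification $\widetilde\Theta=\p_t\Omega$ is then routine. Iterating this bookkeeping to take $k$ derivatives $\p_t$, and tracking how many obstructions accumulate, is precisely what the ``degree'' counting of this paper is designed to organize.
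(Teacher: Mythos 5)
Your reduction to a mixed-type problem for $\Theta:=\p_t\Omega$ with the induced endpoint data
$\Theta|_{t=0}=(\p_\rho^2\Xi_L+G_L)/\rho$, $\Theta|_{t=1}=(\p_\rho^2\Xi_R+G_R)/\rho$ is the right starting point, and your two pointwise conditions at $\rho=0$ are indeed part of the full constraint set (they are \eqref{con:a}--\eqref{con:b} of the paper). But those two pointwise conditions are \emph{not} the ``two directions of instability'' the theorem is about, and the step you dismiss as ``routine'' is exactly where the theorem lives.

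The gap is in the identification $\widetilde\Theta=\p_t\Omega$. Having solved the $\Theta$-problem, you propose $\widehat\Omega_+(t,\cdot)=\Xi_L+\int_0^t\widetilde\Theta$ on $\{\rho>0\}$ and $\widehat\Omega_-(t,\cdot)=\Xi_R-\int_t^1\widetilde\Theta$ on $\{\rho<0\}$. For $\widehat\Omega$ to be a strong solution of \eqref{modelpb0} on all of $(0,1)\times\R$ --- and hence equal $\Omega$ by uniqueness --- the two branches must glue to a single $C^1_\rho$ function across $\{\rho=0\}$ for every $t\in(0,1)$. Since $\widetilde\Theta$ is $C^1_\rho$ across the interface, subtracting the two representations at $\rho=0$ yields the two scalar matching conditions
\begin{equation*}
\Xi_L(0)-\Xi_R(0)+\int_0^1\widetilde\Theta(t,0)\,\ud t=0,\qquad
\Xi_L'(0)-\Xi_R'(0)+\int_0^1\p_\rho\widetilde\Theta(t,0)\,\ud t=0,
\end{equation*}
and neither holds for free. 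Pairing the $\Theta$-equation against the dual profiles $\Phi^0,\Phi^1$ (solutions of the adjoint mixed-type problem with prescribed jumps in $\p_\rho\Phi^0$, resp.\ $\Phi^1$, across $\rho=0$) expresses the two trace integrals in terms of the endpoint data for $\Theta$ and of $\p_t G$; substituting the induced data turns these matching conditions into exactly the integral constraints \eqref{con0}--\eqref{con1}, i.e.\ $\ell_0(\Xi_L,\Xi_R)=\beta_0(G)$ and $\ell_1(\Xi_L,\Xi_R)=\beta_1(G)$. These nonlocal, Fredholm-type orthogonality conditions --- not the pointwise compatibility conditions you wrote down --- are what cut out the codimension-two manifold, and they have no analogue in a one-sided parabolic problem; they are the reason the entire dual-profile machinery of Section~2 is introduced. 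As written, your argument establishes only that the induced data for $\Theta$ is admissible; without \eqref{con0}--\eqref{con1} the primitives do not match at the interface, so $\widehat\Omega\neq\Omega$ in general and the conclusion $\Omega\in H^1((0,1);H^1(\R))$ does not follow.
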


Our first contribution is to continue the study of \cite{DMR} for the linear problem \eqref{modelpb0} in two senses: 
\begin{itemize}
\item[1.] We view the source term, $G$, as given. We do not require that $G$ itself satisfy any orthogonality condition. Rather, \textit{given any $G$}, we can adjust the data $(\Xi_L, \Xi_R)$ accordingly. This is a very important feature for nonlinear, nonlocal problems (such as our eventual application to the stationary Prandtl system). The ability to do this corresponds to ensuring \textit{surjectivity} of certain linear functionals (which is of course the same as \textit{nontriviality}). We summarize this in the following: 
\begin{proposition} \label{PropconINTRO} Let $G \in H^1((0, 1); L^2(\mathbb{R}))$ There exist nontrivial, independent linear functionals $\ell_1, \ell_0$, and linear functionals $\beta_0, \beta_1$ such that if the following constraints hold
\begin{subequations} \label{const:const:I}
\begin{align} \label{con:a:I}
\Xi_{L}''(0) = &G_{Left}(0), \\ \label{con:b:I}
\Xi_{R}''(0) = & G_{Right}(0), \\  \label{con:c:I}
\ell_1(\Xi_{L}, \Xi_{R}) = & \beta_1(G) \\ \label{con:d:I}
\ell_0(\Xi_{L}, \Xi_{R}) = & \beta_0(G),
\end{align}
\end{subequations}
then the unique weak solution to \eqref{modelpb0} satisfies $\Omega \in H^1((0, 1); H^1(\mathbb{R}))$.
\end{proposition}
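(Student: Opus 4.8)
The plan is to obtain the time-derivative regularity by differentiating \eqref{modelpb0} in $t$ and viewing $\Psi := \p_t \Omega$ as the solution of a second copy of the mixed-type problem. Formally differentiating $\rho \p_t \Omega - \p_\rho^2 \Omega = G$ gives
\begin{equation*}
\rho \p_t \Psi - \p_\rho^2 \Psi = \p_t G,
\end{equation*}
whose source lies in $L^2((0,1); L^2(\mathbb{R}))$ — this is the one place the hypothesis $G \in H^1((0,1);L^2(\mathbb{R}))$ enters. Evaluating the original equation on the slices $\{t=0\}$ and $\{t=1\}$ identifies the candidate data for $\Psi$,
\begin{equation*}
\Psi|_{t=0} = \frac{\Xi_L'' + G|_{t=0}}{\rho}\ \ (\rho > 0), \qquad \Psi|_{t=1} = \frac{\Xi_R'' + G|_{t=1}}{\rho}\ \ (\rho < 0).
\end{equation*}
The first task is to check these are admissible, i.e.\ that they lie in whichever weighted space the weak theory for \eqref{modelpb0} demands. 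The factor $\rho^{-1}$ is singular at the interface $\{\rho = 0\}$, and it is exactly the pointwise constraints \eqref{con:a:I}--\eqref{con:b:I} — which force the numerators $\Xi_L'' + G$ and $\Xi_R'' + G$ to vanish at $\rho = 0$ — that cancel this singularity; together with the decay and regularity of the prescribed data as $\rho \to \pm\infty$, this places $\Psi|_{t=0}$ and $\Psi|_{t=1}$ in the admissible class.

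I would then invoke the Fredholm structure of the mixed-type problem (the $\p_t$-refinement of the analysis of \cite{DMR} developed earlier in the paper), now applied to the $\Psi$-equation. The informal statement that one must ``project away from two directions of instability'' means precisely that the solution map, taken into the enhanced class $L^2((0,1);H^1(\mathbb{R}))$, has a two-dimensional cokernel: there exist two independent functionals $\Lambda_1, \Lambda_0$ on the data, and functionals $\mathrm{B}_1, \mathrm{B}_0$ on the source, so that the $\Psi$-problem has an $L^2((0,1);H^1)$ solution if and only if $\Lambda_j(\Psi|_{t=0}, \Psi|_{t=1}) = \mathrm{B}_j(\p_t G)$ for $j = 0,1$. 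Substituting the formulas above for $\Psi|_{t=0}, \Psi|_{t=1}$ and integrating by parts to move $\p_\rho^2$ off $\Xi_L, \Xi_R$ (and off $G$, if helpful), these two identities become $\ell_1(\Xi_L, \Xi_R) = \beta_1(G)$ and $\ell_0(\Xi_L, \Xi_R) = \beta_0(G)$ with $\ell_j$ bounded linear on the data and $\beta_j$ bounded linear on $G$ — i.e.\ \eqref{con:c:I}--\eqref{con:d:I}. With all four constraints in force the $\Psi$-problem is solvable, $\Psi \in L^2((0,1);H^1(\mathbb{R}))$; identifying $\Psi$ with $\p_t \Omega$ by a standard approximation-and-uniqueness argument and combining with the baseline regularity $\Omega \in L^2((0,1); H^1(\mathbb{R}))$ of the weak solution yields $\Omega \in H^1((0,1); H^1(\mathbb{R}))$.

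Two points need attention, and the second is the crux. First, one must verify $\ell_1, \ell_0$ are \emph{nontrivial and independent}: because $\Lambda_1, \Lambda_0$ are independent functionals on the $\Psi$-data and the correspondence $(\Xi_L, \Xi_R) \mapsto (\Psi|_{t=0}, \Psi|_{t=1})$ is onto a sufficiently rich subspace (one may, for instance, prescribe $\Xi_L'', \Xi_R''$ essentially at will away from $\rho = 0$ subject only to the decay constraints), the pullbacks $\ell_1, \ell_0$ remain independent, and one checks they are not destroyed by the integration by parts. Granting this, for \emph{every} $G$ the affine system \eqref{con:a:I}--\eqref{con:d:I} cuts out a nonempty (in fact infinite-dimensional) family of admissible $(\Xi_L, \Xi_R)$, so the proposition is not vacuous. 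Second — and here is where the real work lies — one must actually establish the two-dimensional cokernel and exhibit $\Lambda_1, \Lambda_0, \mathrm{B}_1, \mathrm{B}_0$ for the mixed-type operator. This is the substantive DMR-type input: it requires a sharp description of how solutions behave across the interface (for instance through the Airy eigenmodes of $\rho \p_t - \p_\rho^2$ obtained by separation of variables, which carry the two unstable directions), and then careful bookkeeping to confirm that the resulting orthogonality conditions — naturally expressed in terms of $(\Psi|_{t=0}, \Psi|_{t=1}, \p_t G)$ — convert under the singular relation $\Psi|_{t=0,1} = \rho^{-1}(\Xi''_{L,R} + G)$ into genuine bounded functionals of the original data without collapsing.
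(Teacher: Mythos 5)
Your strategy matches the paper's: evaluate the equation on $\{t=0\}$, $\{t=1\}$ to get $\Psi|_{t=0,1} = (\Xi''_{L,R} + G_{L,R})/\rho$, use the pointwise constraints to kill the $\rho^{-1}$ singularity, invoke the DMR codimension-two orthogonality conditions on $(\Psi|_{t=0},\Psi|_{t=1},\p_t G)$, and push those conditions back to intrinsic constraints on $(\Xi_L,\Xi_R)$. Where the proposal falls short is precisely the two points you yourself flag as ``needing attention'' --- they are not peripheral checks but the actual content of the paper's proof, and you leave them unresolved. The paper realizes the ``two unstable directions'' concretely by constructing dual profiles $\Phi^0, \Phi^1$ solving the adjoint equation $-\rho\p_t\Phi^j - \p_\rho^2\Phi^j = 0$ with prescribed unit jumps (in $\Phi$ or $\p_\rho\Phi$) across $\rho=0$ and vanishing Dirichlet data on the complementary half-lines; the DMR constraint identities then read $\int_0^\infty \rho\Phi^j_L\Xi^{(1)}_L - \int_{-\infty}^0 \rho\Phi^j_R\Xi^{(1)}_R + [\p_\rho^j\Xi_L(0)-\p_\rho^j\Xi_R(0)] = -\int\p_t G\,\Phi^j$. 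Substituting $\Xi^{(1)}_{L,R}$ and integrating by parts the paper arrives at the explicit formula
\begin{align*}
\ell_j[\widehat\Xi_L,\widehat\Xi_R] = \int_0^\infty \p_\rho^2\Phi^j_L\,\widehat\Xi_L - \int_{-\infty}^0 \p_\rho^2\Phi^j_R\,\widehat\Xi_R,
\end{align*}
and the nontriviality of $\ell_0,\ell_1$ follows from the fact that $\p_\rho^2\Phi^j$ are nontrivial $L^2$ functions (which in turn follows from the jump/decay conditions on $\Phi^j$). Your replacement argument --- that independence of $\Lambda_0,\Lambda_1$ survives pullback because $(\Xi_L,\Xi_R)\mapsto(\Psi|_{t=0},\Psi|_{t=1})$ lands in a ``sufficiently rich subspace'' --- is a heuristic, not a proof: a priori the range of that map could sit inside $\ker\Lambda_j$, and ruling that out is exactly what requires the explicit form of $\Phi^j$ and the integration by parts you only gesture at. So the proposal is a correct sketch of the same route, but the two deferred points are the proof, and the proposal does not supply them.
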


\item[2.] We upgrade the number of $\p_t$ derivatives to an arbitrary number $k_\ast$ by putting $2$ extra constraints for each $\p_t$ (and therefore $2k_\ast$ in total). While this process may ``seem natural", it turns out to require significantly new ideas to go from $\p_t$ to $\p_t^2$, etc... Essentially, this is due to the fact that the linear functionals which define the constraints may start to become linearly dependent in principle. We therefore need some arguments inspired by control theory and some delicate counting arguments (based on a newly introduced notion of ``degree") to rule out possible linear dependencies.  
\end{itemize}

Indeed, our main result regarding the linear problem \eqref{modelpb0} is as follows. 
\begin{theorem} \label{thm:toy:main}Let $G$ be a given, smooth source term in \eqref{modelpb0}. Fix any $(\bar{\Xi}_L, \bar{\Xi}_R) \in \bar{P}_{k_\ast} \subset C^\infty_c(\mathbb{R}_+) \otimes C^\infty_c(\mathbb{R}_+)$ (which will be introduced below in \eqref{Pbarkast}). Let $\chi: \mathbb{R}_+ \mapsto \mathbb{R}$ be a fixed cutoff function equal to $1$ on $[0, 1)$ and vanishing on $(2, \infty)$. There exist $4k_\ast $numbers (which depend on $G$):
\begin{align}
\bold{A}_{k_\ast} := (c_0^{(k)}, c_1^{(k)}, q_{L, 2 + 3(k-1)}, q_{R, 2 + 3(k-1)}), \qquad 1 \le k \le k_\ast, 
\end{align}
and smooth, compactly supported functions $\bold{e}_0^{(k)}, \bold{e}_1^{(k)}$, such that data of the type 
\begin{subequations} \label{data:set}
\begin{align} 
(\Xi_L, \Xi_R) = & ( \widehat{\Xi}_L, \widehat{\Xi}_R) +(Q_L, Q_R), \\
( \widehat{\Xi}_L, \widehat{\Xi}_R) = &(\bar{\Xi}_L, \bar{\Xi}_R) + \sum_{k = 1}^{k_\ast} c^{(k)}_0 \bold{e}^{(k)}_0 + c^{(k)}_1 \bold{e}^{(k)}_1 \\
(Q_L, Q_R) = & (\chi(\rho) \sum_{k = 1}^{k_\ast} q_{L, 2 + 3(k-1)} \rho^{2 + 3(k-1)}, \chi(\rho) \sum_{k = 1}^{k_\ast} q_{R, 2 + 3(k-1)} \rho^{2 + 3(k-1)} )
\end{align}
\end{subequations}
result in the unique strong solution $\Omega$ to \eqref{modelpb0} having the higher regularity $H^{k_\ast}(0, 1, H^1(\mathbb{R}_+))$. 
\end{theorem}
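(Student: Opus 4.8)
The plan is to prove Theorem~\ref{thm:toy:main} by an induction on $k_\ast$, where at each stage we add one power of $\p_t$ regularity by imposing exactly two new scalar constraints, using Proposition~\ref{PropconINTRO} as the base case $k_\ast = 1$. First I would set up the induction hypothesis: assume that for some $k \ge 1$ we have produced data of the form \eqref{data:set} (truncated at level $k$) so that the strong solution $\Omega$ lies in $H^{k}((0,1); H^1(\mathbb{R}_+))$. Differentiating \eqref{modelpb0} $k$ times in $t$, the function $\Omega^{(k)} := \p_t^k \Omega$ satisfies an equation of the same mixed-type structure $\rho \p_t \Omega^{(k)} - \p_\rho^2 \Omega^{(k)} = G^{(k)} + (\text{lower-order terms involving } \p_t^j\Omega, j < k)$, with boundary data at $t=0$ on $\rho>0$ and $t=1$ on $\rho<0$ determined by repeatedly using the PDE itself to trade $\p_t$ for $\p_\rho^2$ on the prescribed profiles. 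To push to $H^{k+1}$ we want to apply the $k_\ast = 1$ result (Proposition~\ref{PropconINTRO}) to this differentiated system; this produces the two compatibility conditions \eqref{con:a:I}--\eqref{con:b:I} (which fix $\Xi_L''(0), \Xi_R''(0)$ at the appropriate differentiated level, and are absorbed into the polynomial correction $(Q_L, Q_R)$ by choosing $q_{L,2+3k}$, $q_{R,2+3k}$) and the two functional constraints $\ell_1 = \beta_1(G^{(k)})$, $\ell_0 = \beta_0(G^{(k)})$, which we satisfy by adjusting the two new free parameters $c_0^{(k+1)}, c_1^{(k+1)}$ multiplying the correctors $\bold{e}_0^{(k+1)}, \bold{e}_1^{(k+1)}$.

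The key structural point — and the place where the ``degree'' counting enters — is that we must verify the newly imposed constraints are genuinely solvable by the new parameters, i.e., that the $2\times 2$ linear system determining $(c_0^{(k+1)}, c_1^{(k+1)})$ (and, separately, the choice of the polynomial coefficients $q_{L}, q_R$ matching the pointwise conditions at $\rho = 0$) is nondegenerate, and that adding these new correctors does not destroy the constraints already satisfied at lower levels. The plan is to organize the correctors $\bold{e}_i^{(k)}$ and the polynomial bumps $\rho^{2+3(k-1)}$ so that they are ``graded'': a corrector introduced at level $k$ should affect the level-$k$ constraint in a dominant, invertible way while contributing only controllably (or with a strictly lower-order footprint measured by the degree) to the constraints at levels $< k$ and $> k$. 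Concretely, I would show the map from $(c_0^{(k)}, c_1^{(k)})_{k=1}^{k_\ast}$ and $(q_{L,\cdot}, q_{R,\cdot})$ to the full tuple of $4k_\ast$ constraint values is represented by a block-triangular matrix with invertible diagonal blocks, which is then invertible; the spacing $3(k-1)$ in the polynomial exponents and the separation between the $\bold{e}$-correctors are chosen precisely so that the degree bookkeeping forces this triangular structure. This is exactly the mechanism alluded to in item~2 of the introduction for ruling out the ``possible linear dependencies''.

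The main obstacle I anticipate is precisely this nondegeneracy/linear-independence step: a priori the functionals $\ell_0, \ell_1$ at the $k$-th differentiated level could become linearly dependent on (or be annihilated by) the correctors available, because each time we differentiate the PDE in $t$ we both change the source and change the boundary data in a coupled way (the pointwise conditions \eqref{con:a:I}--\eqref{con:b:I} at one level feed into the functional conditions at the next). Resolving this is where the control-theoretic input and the degree argument are needed: I would define the degree of a corrector (or of a term in the hierarchy) to track how many factors of $\rho$ it carries near the interface versus how many $\p_t$'s have been applied, prove a monotonicity/strict-inequality lemma showing that a level-$k$ corrector has strictly larger (or smaller) degree than anything it could be confused with at other levels, and conclude that the relevant determinants are nonzero. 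Two secondary technical points to handle carefully: (i) verifying that the polynomial corrections $(Q_L, Q_R) = \chi(\rho)\sum q_{\cdot, 2+3(k-1)}\rho^{2+3(k-1)}$ have enough vanishing at $\rho = 0$ to not interfere with the already-imposed pointwise conditions at lower levels — this is guaranteed by the exponent $2 + 3(k-1) \ge 2$ and the gap of $3$ — and (ii) tracking regularity: each application of Proposition~\ref{PropconINTRO} to the $j$-times differentiated system requires $G^{(j)} \in H^1((0,1); L^2(\mathbb{R}))$ plus control of the lower-order commutator terms, which follows from the induction hypothesis $\Omega \in H^{j}((0,1);H^1)$ and smoothness of $G$. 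Assembling these, the induction closes and yields $\Omega \in H^{k_\ast}((0,1); H^1(\mathbb{R}_+))$ for the data described in \eqref{data:set}.
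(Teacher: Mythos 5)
Your proposal correctly identifies the shape of the problem (two new constraints per level of regularity, plus the need to show the resulting $4k_\ast$ conditions are jointly solvable by the $4k_\ast$ free parameters), and the remark about the polynomial bumps $\rho^{2+3(k-1)}$ being staggered so that the pointwise conditions at $\rho = 0$ are triangular is exactly right. However, the mechanism you propose for the hard part --- nondegeneracy of the integral constraints $\ell_0, \ell_1$ across levels --- does not match what the paper does, and I do not think it would succeed as stated. You define a ``degree'' of a corrector that tracks how many factors of $\rho$ it carries near the interface versus how many $\p_t$'s have been applied, and hope that a graded choice of $\bold{e}_j^{(k)}$ makes the constraint matrix block-triangular. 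The difficulty is that the integral functionals $\ell_j^{(k)}$ pair $\widehat{\Xi}_{L}, \widehat{\Xi}_R$ (which are supported in $(\eps, \infty)$, i.e.\ \emph{away} from $\rho = 0$) against $\rho\,\p_t^k \Phi^{(j)}$. On the region $\rho > \eps$, the dual profiles $\Phi^{(j)}$ are smooth and generically non-polynomial, so a degree that counts $\rho$-vanishing at the interface says nothing about whether the functions $\p_t^k\Phi^{(j)}\big|_{t=0}$, $k = 1, \dots, k_\ast$, $j=0,1$, are linearly independent on $(\eps,\infty)$. In particular, the freedom to ``choose correctors to make the map triangular'' only exists \emph{after} one knows these functionals are linearly independent; it cannot substitute for that proof.

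The paper's argument for nondegeneracy is genuinely different in two respects. First, it is not an induction on $k$: it establishes linear independence of the entire collection $\{\p_t^k \Phi^{(j)}\}_{1 \le k \le k_\ast,\, j = 0,1}$ on the slice $\{t = 0\} \times (\eps, \infty)$ in a single stroke (Lemmas \ref{lem:rig:1} and its two corollaries), after which one may simply pick a \emph{dual basis} $\bold{e}_j^{(k)}$, not merely a graded one. Second, the two key ingredients are (i) a rigidity / unique-continuation lemma: if a solution of the adjoint equation on $(0,1)\times(\eps,\infty)$ has zero data on $\{t=0\}$, $\{t=1\}$ and decays at $\rho = \infty$, then it vanishes identically --- proved by a moment computation ($\int_0^1 t^n g(t)\,dt = 0$ for all $n$, then Stone--Weierstrass); and (ii) a ``degree'' that tracks the order of the \emph{polynomial in $t$} generated when a putative linear relation $\sum C_k \p_t^k\Phi^{(0)} + \sum D_k \p_t^k\Phi^{(1)} = 0$ is antidifferentiated in $t$ from both boundaries and matched across $\rho = 0$ using the prescribed jump conditions. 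The two degrees $d^{(0)}, d^{(1)}$ capture the zeroth- and first-order jumps of $\Phi^{(0)}, \Phi^{(1)}$ respectively, and ensure that a unique highest-order $t$-monomial survives, giving the contradiction. This is a $t$-polynomial bookkeeping on the dual side, not a $\rho$-vanishing bookkeeping on the corrector side. You would need to replace your proposed degree with this one and supply the rigidity lemma; without those the central nondegeneracy claim is unsupported.
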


\subsection{Setup and Main Results for Prandtl}

The stationary Prandtl system is a much more complicated system due to many reasons (it is quasilinear, it is nonlocal due to the incompressibility, it is not even linearly the same as \eqref{modelpb0}, etc...). This system reads:
\begin{align}
\begin{aligned} \label{eq:PR:0}
&u_P \p_x u_P + v_P \p_y u_P - \p_y^2 u_{P} = -\p_x p_E(x), \\
&\p_x u_P + \p_y v_P = 0, \qquad (x,y) \in \Omega_L := (1, 1+L) \times (0, \infty), \\
&u_P|_{y = 0} = v_P|_{y = 0} = 0, \qquad \lim_{y \rightarrow \infty} u_P(x, y) = u_E(x).
\end{aligned}
\end{align}
Above, the function $u_E(x)$ is the Euler flow to which the Prandtl boundary layer is forced to match, and the function $\p_x p_E(x)$ is the Euler pressure at $0$. Both of these functions are considered as given data from the point of view of the Prandtl system, and satisfy the Bernoulli law $u_E \p_x u_E = - \p_x p_E$. 

It is well-known that the equation \eqref{eq:PR:0} is parabolic in $x$ as long as $u_P$ (the coefficient in front of the $\p_x$ transport) remains nonnegative. This is due to the formal scaling $u_P \p_x = \p_y^2$. Due to this, it is classical to pose initial data at $x = 0$ for \eqref{eq:PR:0}. We refer the reader to the works \cite{Oleinik}, \cite{GI1}, \cite{IyerBlasius} for rigorous analysis of the Prandtl equations in this regime.  

When the Euler pressure satisfies the sign condition $\p_x p_E(x) < 0$, we say there is an ``adverse pressure gradient". In this event, the solutions constructed by Oleinik in \cite{Oleinik} are not global solutions. This is due to the famous phenomenon of boundary layer separation. Physically, this corresponds to the coherent boundary layer detaching from the wall (say at $\{y = 0\}$) and ejecting into the fluid domain. As far as rigorous analysis of boundary layer separation for the Prandtl equations, we point the reader to the works \cite{MD} and \cite{Zhangsep}.

In this work, and the companion work \cite{IM22}, we are interested in \textit{reversed} flows, which occur after boundary layer separation. This regime can be visualized by consulting Figure \ref{Fig:1} below. Physically, this regime is distinguished from before separation because $u_P$ changes sign: $u_P < 0$ below the dotted line in Figure \ref{Fig:1} (hence the terminology ``reversed" flow). 
\begin{figure}[h] 
\hspace{28 mm} \includegraphics[scale=0.2]{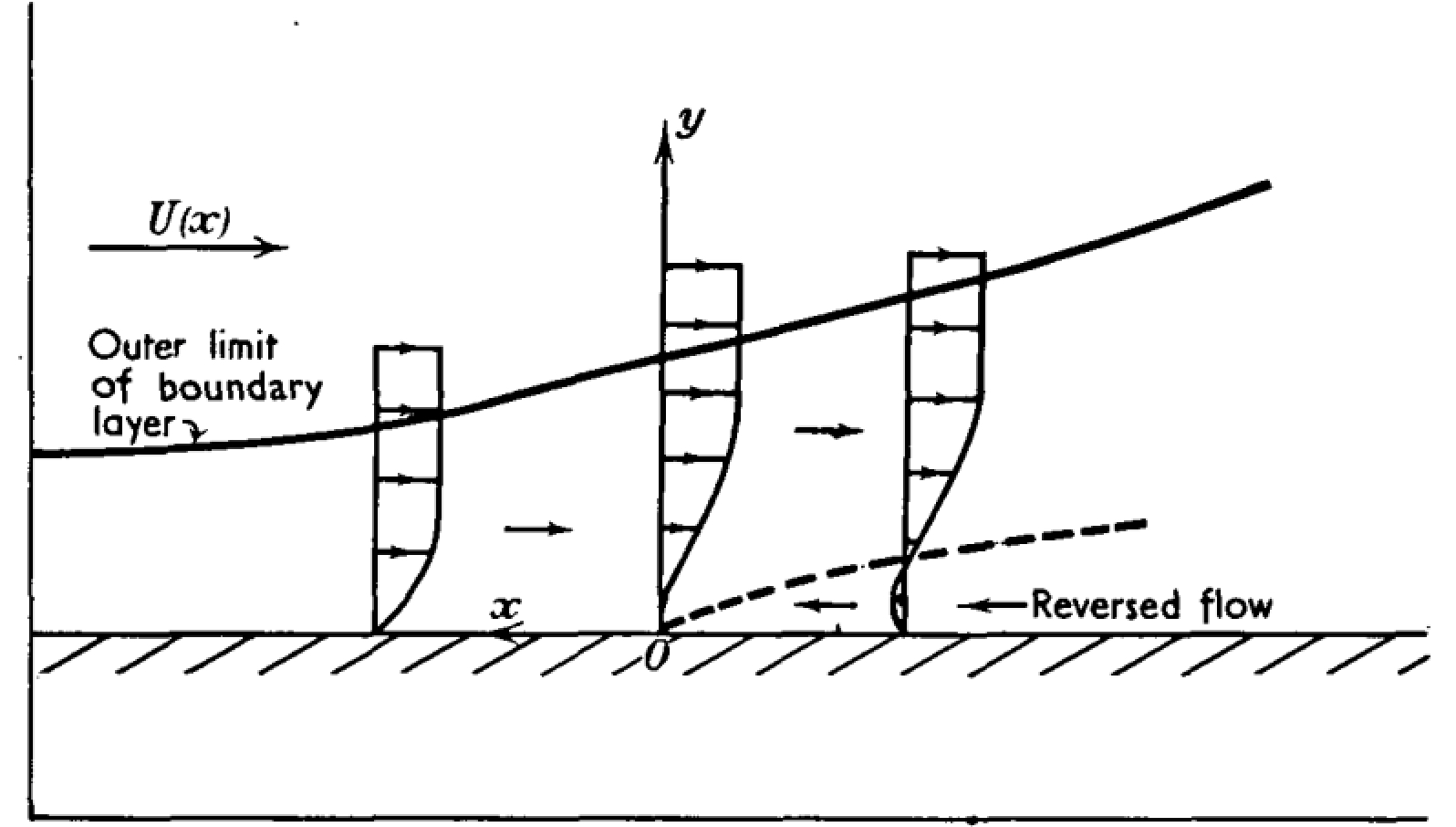}
\caption{Onset and continuation past separation, \cite{Stew0}} \label{Fig:1}
\end{figure}
Mathematically, the sign change of $u_P$ prevents us from regarding the system \eqref{eq:PR:0} as an evolution in $x$. Instead, we view \eqref{eq:PR:0} as a quasilinear, nonlocal, mixed-type problem. To emphasize this point of view further, we direct the reader's attention to Figure \ref{fig:org:mix}.
\begin{figure}[h]
\centering
\begin{tikzpicture}
\draw[ultra thick, <-] (-.5,3) node[above]{$y$} -- (-.5,-0.5);
\draw[ultra thick, -] (4,1.5) -- (4,-2);
\draw[ultra thick, -] (-.5,-2) -- (4,-2);
\node [below] at (1.75, -2) {$u_P = 0, \psi_P = 0$};
\draw[ultra thick,blue, -] (-.5, -.5) to[bend left] (4,1.5);
\node [below] at (1.75, 1) {$\textcolor{blue}{u_P =0}$};
\node [below] at (1.75, -.2) {$u_P < 0$};
\node [below] at (1.75, -.6) {\scriptsize $u_P \p_x u_P + \textcolor{cyan}{v_P \p_y u_P} - \p_y^2 u_P = F$};
\node [below] at (1.75, -1.2) {(reversed flow)};
\node [below] at (1.75, 3.2) {$u_P > 0$};
\node [below] at (1.75, 2.7) {\scriptsize $u_P \p_x u_P + \textcolor{cyan}{ v_P \p_y u_P} - \p_y^2 u_P = F$};
\node [below] at (1.75, 2.1) {(forward flow)};
\draw[ultra thick, dashed, -] (-.5,-2) -- (-.5,-.5);
\draw[ultra thick, dashed, ->] (4,1.5) -- (4,3);
\draw[ultra thick, dashed, ->] (4,-2) -- (5,-2);
\draw[ultra thick, dashed, ->] (-.5,-2) -- (-1.5,-2);
\node[right] at (5,-2) {$x$};
\end{tikzpicture}
\caption{Prandtl as a Free Boundary Mixed-Type Problem} \label{fig:org:mix}
\end{figure}
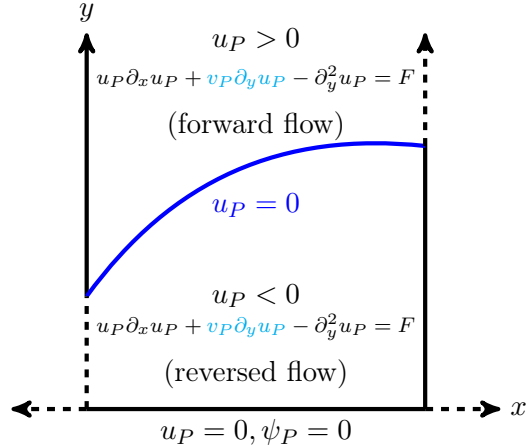

A natural strategy to study reversed flows is to perturb around exact solutions to \eqref{eq:PR:0} that are known to exhibit backflow. Towards this goal, we define
\begin{align} \label{choice:uEx}
u_E(x) := x^n, \qquad n = \frac{\beta}{2-\beta}, \qquad - 0.2 < \beta < 0.
\end{align}
We correspondingly define the Falkner-Skan self-similar profiles:
\begin{align} \label{uFS}
u_{FS}(x, y) := u_E(x) f'(\eta), \qquad \eta := \frac{y}{x^{\frac{1-n}{2}}}. 
\end{align}
The self-similar profile, $f(\cdot)$, satisfies the following equation:
\begin{align} \label{FS:beta}
f''' + f f'' + \beta (1 - (f')^2) = 0, \qquad f(0) = f'(0) = 0, \qquad f'(\infty) = 1,
\end{align}
which is often referred to as a Falkner-Skan self-similar profile to the Prandtl equations. We refer the reader to \cite{Stewartson}, \cite{Stew0}, \cite{Stew64} for further discussion about these self-similar solutions. Our results will all be perturbative around these Falkner-Skan profiles. As a result, it is useful to define the ``remainders":
\begin{align} \label{remsum1}
\psi_P = \psi_{FS} + \eps \psi_{R}, \qquad u_P = u_{FS} + \eps u_R, \qquad v_P = v_{FS} + \eps v_R, \qquad 0 < \eps << 1, 
\end{align}
where $\eps$ is the perturbation amplitude. 

We define 
\begin{align} \n
\mathcal{P}_{general} := \{ (F_{Left}, F_{Right}) | &F_{Left} : \mathbb{R}_+ \mapsto \mathbb{R},  F_{Right}: \mathbb{R}_- \mapsto \mathbb{R}, \\
& \text{supp}(F_{Right}) \subset (-y_{b},0), \| (F_{Left}, F_{Right}) \|_{\mathcal{P}} < \infty \},
\end{align} 
where
\begin{align}
\| (F_{Left}, F_{Right}) \|_{\mathcal{P}_{general}} := \sum_{j = 0}^{J_{Max}} \|\p_{\zeta}^j F_{Left} \langle \zeta \rangle^{3M_{Max}}\|_{L^\infty(\mathbb{R}_+)} + \sum_{j = 0}^{J_{Max}} \|\p_{\zeta}^j F_{Right} \|_{L^\infty(\mathbb{R}_-)}
\end{align}
We leave the parameters $M_{Max}, J_{Max}$ as generic large numbers. $F_{Right}$ is defined on $\mathbb{R}_-$. The parameter $y_b$ will be chosen concretely by $y_b = \frac12 \min\{ 1,  L^{-\frac13}\}$, which guarantees that $\omega_g$ is also compactly supported away from $y = 0$. For ease of exposition, while our proof carries over easily to data in $\mathcal{P}_{general}$, we choose to work with 
\begin{align}
\mathcal{P} = & C^\infty_c(\overline{\mathbb{R}_+}) \times C^\infty_c(\overline{\mathbb{R}_+}).
\end{align} 
We will also introduce some subspaces of $\mathcal{P}$. First, we introduce functions which are compactly supported away from $0$:  
\begin{align}
\widehat{\mathcal{P}} := C^\infty_c(\mathbb{R}_+) \times C^\infty_c(\mathbb{R}_+)
\end{align}
Second, we introduce 
\begin{align} \label{Pbarkast}
\bar{\mathcal{P}}_{k_\ast} := \{ (\widehat{F}_{Left}, \widehat{F}_{Right}) \in \widehat{P} : \ell_0^{(k)} = 0, \ell_1^{(k)} = 0 \text{ for } 1 \le k \le k_\ast \},
\end{align}
where $\ell^{(k)}_0, \ell^{(k)}_1$ will be linear functionals we will introduce later in \eqref{form:ell:0} -- \eqref{form:ell:1}. 

As described in the work \cite{IM22}, we work at the level of the renormalized vorticity, $\omega_g$, due to favorable cancellations in the equation satisfied by $\omega_g$ as opposed to the velocity itself, where $\omega_g$ is defined as follows:
\begin{align}
\omega_{g}(x, y) := &\p_y u_R(x, y) - \frac{\p_y^2 u_{FS}(x, y)}{\p_y u_{FS}(x,y)} u_R(x, y).
\end{align}
We will define the data
\begin{subequations}
\begin{align} \label{dwttm1}
\omega_g|_{x = 1}(y) =&F_{Left}\Big(\frac{1}{L^{\frac13}} (\frac{y}{y_1(\eps)} - 1 )\Big) \qquad y > y_1(\eps), \\ \label{dwttm2}
 \omega_g|_{x = 1+L}(y) = & F_{Right}\Big(\frac{1}{L^{\frac13}} (\frac{y}{y_2} - 1 )\Big), \qquad 0 \le y \le y_2, 
\end{align} 
\end{subequations}
where the number $y_1(\eps)$ is defined by the relation 
\begin{align}
u_{FS}(1, y_1(\eps)) + \eps u_R(1, y_1(\eps)) = 0. 
\end{align}
We will enforce the constraint: 
\begin{align} \label{mc1}
 \int_0^1 \frac{F_{Right}(L^{-\frac13}(z'-1))}{\overline{u_{FS}}'(1, z')} \ud z' = 0. 
\end{align} 

In order to describe our main result, we need to first recall the main result from \cite{IM22}, which are \textit{a-priori} stability estimates: 
\begin{theorem}[Stability, \cite{IM22}] \label{thm:pri:1} Fix any $(F_{Left},  F_{Right}) \in \mathcal{P}$ such that \eqref{mc1} holds, and take $\omega_g$ to achieve the data as shown in \eqref{dwttm1} - \eqref{dwttm2}. Let the maximal tangential length scale $L_{Max} > 0$ be any fixed number. There exist a discrete set of resonant lengths, $\{L_k \}_{k = 1}^N$, $0 < L_k < L_{k+1} < L_{Max}$, depending only on the background profile, $u_{FS}$, and numbers $M_{Max}, J_{Max},$ and $k_{\ast}$, depending only on universal constants such that the following holds. Choose $L \in (0, L_{Max}) \setminus \cup_{k = 1}^N L_k$.  There exists an $\eps_{\ast} > 0$, small relative to $\min_{k} |L - L_k|$ such that for any $0 < \eps < \eps_{\ast}$, we have the \textit{a-priori} stability estimates:
\begin{align} \label{stable:2}
\sum_{k = 0}^{k_{\ast}} \| (L \p_x)^k \psi_R \|_{L^\infty_y L^6_x} + \| (L \p_x)^k u_R \|_{L^\infty_y L^6_x} + \|(L \p_x)^k \omega_R \|_{L^\infty_y L^6_x} \le & o_L(1) \| F_{Left}, F_{Right} \|_{\mathcal{P}} 
\end{align}
for solutions $\omega_g \in H^{k_{\ast}}_x(0, 1;H^{1}_y)$.
\end{theorem}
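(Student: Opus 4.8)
The statement is the main a-priori stability estimate of the companion paper \cite{IM22}; here I outline the strategy one follows to establish it. Inserting the perturbative ansatz \eqref{remsum1} into \eqref{eq:PR:0} and using the defining relation $\omega_g = \partial_y u_R - \tfrac{\partial_y^2 u_{FS}}{\partial_y u_{FS}} u_R$, the Oleinik-type renormalization (which produces the ``favorable cancellations'' mentioned above) yields, schematically, a forced transport--diffusion equation of mixed-parabolic type,
\begin{align*}
u_{FS}\, \partial_x \omega_g + v_{FS}\, \partial_y \omega_g - \partial_y^2 \omega_g = \mathcal{N}[\omega_g] + (\text{forcing from } \partial_x p_E \text{ and } u_{FS}),
\end{align*}
where the transport coefficient $u_{FS}$ changes sign across the curve $\{u_{FS}=0\}$ of Figure \ref{fig:org:mix}, and $\mathcal{N}$ collects the quadratic remainder terms, which are nonlocal in $y$ through $v_R(x,y) = -\int_0^y \partial_x u_R$. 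This is precisely the nonlinear, nonlocal analogue of the linear toy model \eqref{modelpb0}, with the two-point structure (data at $x=1$ on $\{u_{FS}>0\}$ and at $x=1+L$ on $\{u_{FS}<0\}$) mirroring \eqref{dwttm1}--\eqref{dwttm2}.

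First I would establish the base case $k=0$. Testing the $\omega_g$-equation against $\omega_g$ with an appropriate $y$-weight, the transport term integrates to $-\tfrac12\int \partial_x u_{FS}\,\omega_g^2$ plus boundary contributions at $x=1$ (supported in $y>y_1(\eps)$, where $u_{FS}>0$) and at $x=1+L$ (supported in $y<y_2$, where $u_{FS}<0$); by \eqref{dwttm1}--\eqref{dwttm2} these are controlled by $\|(F_{Left},F_{Right})\|_{\mathcal{P}}$, while the constraint \eqref{mc1} removes the zero-mode obstruction. The diffusion term supplies the coercive $\int|\partial_y\omega_g|^2$, and the nonlinear terms $\mathcal{N}[\omega_g]$ are absorbed using $0<\eps\ll 1$. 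The key linear input is invertibility of the mixed-type operator on the left-hand side: this fails exactly at the discrete set of resonant lengths $\{L_k\}_{k=1}^N$ (where the homogeneous two-point problem in $x$ admits a nontrivial solution), so for $L$ in the complement one obtains a quantitative bound with constant $\lesssim (\min_k|L-L_k|)^{-1}$; choosing $\eps_\ast$ small relative to this gap closes the argument, and the $L^{1/3}$-localization of the prescribed data in \eqref{dwttm1}--\eqref{dwttm2} is what makes the solution $o_L(1)$-small relative to $\|(F_{Left},F_{Right})\|_{\mathcal{P}}$ as $L\to 0$.

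Next I would propagate the estimate to the derivatives $(L\partial_x)^k\omega_g$, $1\le k\le k_\ast$, by applying $(L\partial_x)^k$ to the equation and commuting; the commutators $[(L\partial_x)^k,\, u_{FS}\partial_x + v_{FS}\partial_y - \partial_y^2]$ generate terms strictly lower order in $(L\partial_x)$, controlled by the inductive hypothesis, and the normalization by $L$ is exactly what keeps these uniformly bounded as $L\to 0$. This yields the full sum over $0\le k\le k_\ast$ at the level of $\omega_g$ in $H^1_y$. Finally, $u_R$ is recovered from $\omega_g$ by solving, for each fixed $x$, the first-order ODE in $y$ defining $\omega_g$ (integrating factor $\partial_y u_{FS}$, with $u_R|_{y=0}=0$), and then $\psi_R$ by one further $y$-integration; the weight $\langle\zeta\rangle^{3M_{Max}}$ in $\|\cdot\|_{\mathcal{P}}$ furnishes the $L^\infty_y$ control, and $L^6_x$ follows from the $H^{k_\ast}_x$ bound by one-dimensional Sobolev embedding.

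The main obstacle is the simultaneous presence of (i) the sign-changing transport, which forbids treating $x$ as an evolution direction and forces the mixed-type two-point formulation, (ii) the nonlocality of $v_R$, and (iii) the requirement that all constants be uniform as $L\to 0$ while closing at arbitrary order $k_\ast$. Among these, the delicate analytic core is the quantitative invertibility of the linearized mixed-type operator away from the resonances $\{L_k\}$ — where the structural features of the reversed Falkner-Skan profile (the sign change of $u_{FS}$, the critical point of $f'$, and the adverse pressure gradient) enter most sharply — and it is precisely these a-priori bounds that the present paper combines with the construction theory of Theorem \ref{thm:toy:main} in order to upgrade them into genuine solutions $\omega_g \in H^{k_\ast}_x(0,1;H^1_y)$.
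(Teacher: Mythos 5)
This theorem is not proven in the present paper: the bracketed attribution ``[Stability, \cite{IM22}]'' in the theorem header, and the surrounding text (``we need to first recall the main result from \cite{IM22}''), make clear that it is imported wholesale from the companion paper. The present paper treats it as a black box — its role is to supply the inequality \eqref{abd}, $\|\psi\|_{\mathcal{Z}} \lesssim \|F_{Left}, F_{Right}\|_{\mathcal{P}}$, and the paper's contribution is the complementary bound \eqref{bbd} and the construction argument that upgrades the a-priori estimate to an actual solution. There is therefore no proof in this document against which your attempt can be compared.

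As a blind reconstruction of what the proof in \cite{IM22} ought to look like, your sketch is plausible and consistent with the framework the introduction describes: the renormalized vorticity $\omega_g$ satisfying a sign-changing transport--diffusion equation, weighted energy estimates with boundary contributions controlled by \eqref{dwttm1}--\eqref{dwttm2} and \eqref{mc1}, resonant lengths $\{L_k\}$ as failures of invertibility of the linearized mixed-type operator, commutator estimates to propagate $(L\partial_x)^k$, and recovery of $u_R$ and $\psi_R$ by $y$-integration. One small caution: you assert the constant scales like $(\min_k|L-L_k|)^{-1}$ near resonances, but the theorem only asserts an $o_L(1)$ pre-factor for small $L$ and the paper explicitly disclaims quantifying the constant's behavior when $L$ is not small; the $o_L(1)$ smallness comes from the $L^{1/3}$-localization of the data as $L \downarrow 0$, which you do correctly identify. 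To actually verify your sketch, you would need to consult \cite{IM22} directly; the $\mathcal{Z}$-norm machinery referenced in \eqref{calZ} and \eqref{usec1} is defined and exploited there, not here.
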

Above, the pre-factor $o_L(1)$ represents a function, $n(L)$, such that $\lim_{L \downarrow 0} n(L) = 0$ (we do not quantify the behavior of $n(L)$ when $L$ gets large).  The above theorem is an \textit{a-priori} stability estimates, assuming qualitative smoothness on the perturbation $\omega_g$. However, constructing a sufficiently smooth perturbation $u_R$ is a nontrivial task in this scenario. Indeed, for more standard PDE problems, the construction of a smooth enough solution essentially follows from \textit{a-priori} bounds. However, due to the ``mixed-type" nature of the problem here, we need to impose further constraints on the data $(F_{Left}, F_{Right})$ in order to guarantee the existence of regular solutions. 

These constraints were studied in a related setting in the recent work of \cite{DMR}, which studies the Burgers equation:  
\begin{align} \label{Burger}
(y + \eps u) \p_x u - \p_y^2 u = 0, \qquad (x, y) \in (0,1) \times (-1,1). 
\end{align}
The scheme we implement here to construct the solution is similar in spirit to the work of \cite{DMR}. Several complicating factors for the true Prandtl equations make the analysis significantly more involved than the model problem studied in \cite{DMR}. We list these here, and we discuss them in more detail later. 
\begin{itemize}
\item Prandtl contains extra \textit{linear} terms, as compared to the Burgers equation. Indeed, linearizing \eqref{eq:PR:0}, we obtain the linearized system
\begin{align} \label{simpL2}
u_{FS} \p_x u_R + u_R \p_x u_{FS} + v_{FS} \p_y u_R + v_R \p_y u_{FS} - \p_y^2 u_R = 0. 
\end{align}
While one can think of the first transport term $u_{FS}\p_x u - \p_y^2 u_R = 0$ as similar to the linear part of the Burger's equation, \eqref{Burger}, the remaining three transport terms are still linear and create serious difficulties (they cannot easily be moved to the right-hand side, in a way we could potentially do with a nonlinearity). 

\item One of these linear terms, $v_R \p_y u_{FS}$ is \textit{nonlocal}, which can be seen by expressing $v_R = - \int_0^y \p_x u_R$. This is well-known to be a serious difficulty of the Prandtl system. These difficulties are even further amplified in the mixed-type setting. Heuristically, we can think that Figure \ref{fig:org:mix} shows information transported left-to-right above the interface, right-to-left below the interface, but also bottom-to-top due to the $v_R \p_y u_{FS}$ term, all at the linearized level.  

To contend with the difficulties above, we identify new structures and cancellations in the system which allow us to decompose the dynamics into a codimension-two manifold on which ``Airy-dynamics" occur, and a two-dimensional manifold on which we need to use Energy estimates that are more non-local. In turn, closing this argument requires us to excise a finite number of $L$ values. 

\item A third difficulty is due to the introduction of potentially small values of $L$ (which is the main case). It is natural, due to the scaling of the equations, to consider data that are at the ``fast scale" in $L^{\frac13}$ as shown in \eqref{dwttm1} -- \eqref{dwttm2}. However, this fast scale is destroyed at the stream function level through integration. Therefore, even though we impose only fast scales in \eqref{dwttm1} -- \eqref{dwttm2}, the data for higher tangential derivatives immediately inherits slow scales as well. Our analysis relies on careful decompositions by scale: the fast scales typically have a larger amplitude (but highly localized), whereas the slow scales have smaller amplitude (positive powers of $L$) but vary slowly.

\end{itemize}

The main result of this paper is the theorem below. It states that we can find a set of data, $(F_{Left}, F_{Right})$, which guarantees the existence of a sufficiently smooth solution (to which the \textit{a-priori} stability estimates in Theorem \ref{thm:pri:1}  can be applied). 
\begin{theorem}[Existence] \label{thm:twomain} Fix an arbitrary $(\overline{F}_{Left}, \overline{F}_{Right}) \in \overline{\mathcal{P}}_{k_\ast}$. There exists an operator $\mathcal{N}_{side}: \overline{\mathcal{P}}_{k_\ast} \mapsto \mathcal{P}$, a compact perturbation of the identity, such that the following holds. Let $(F_{Left}, F_{Right}) := \mathcal{N}_{side}[\overline{F}_{Left}, \overline{F}_{Right}]$. Then there exists a unique solution $\psi_R$ such that $\psi_P$ constructed from \eqref{remsum1} is a solution to \eqref{eq:PR:0}, and $\omega_g$ achieves the given data as shown in \eqref{dwttm1} -- \eqref{dwttm2}. Moreover, this solution has the regularity $\p_x^k u_R \in L^2_x L^2_y$ for each $k$, $1 \le k \le k_{\ast}$. 
\end{theorem}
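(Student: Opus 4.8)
The plan is to set up a linear-to-nonlinear fixed point scheme in which the "linear engine" is the higher-regularity theory for the mixed-type problem developed in the first part of the paper (Proposition \ref{PropconINTRO} and Theorem \ref{thm:toy:main}), now transported to the genuine linearized Prandtl operator \eqref{simpL2} written in terms of the renormalized vorticity $\omega_g$. First I would fix the background Falkner-Skan profile $u_{FS}$, and, given a candidate remainder, rewrite \eqref{eq:PR:0} via \eqref{remsum1} as an equation for $\omega_g$ of the schematic form $\mathcal{L}_{FS}\,\omega_g = \eps\,\mathcal{Q}(\omega_g) + (\text{lower-order linear nonlocal terms})$, where $\mathcal{L}_{FS}$ is, after a change of variables, of the Airy/mixed-type form \eqref{modelpb0} with the interface at the curve $\{u_{FS} + \eps u_R = 0\}$ (which is why $y_1(\eps)$ enters the data \eqref{dwttm1}). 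The three extra linear transport terms $u_R\p_x u_{FS} + v_{FS}\p_y u_R + v_R\p_y u_{FS}$, and in particular the nonlocal $v_R\p_y u_{FS}$, are carried along as a bounded perturbation of the model; this forces the $o_L(1)$ smallness and the excision of the resonant lengths $\{L_k\}$, exactly as in Theorem \ref{thm:pri:1}.

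The core linear step is: given a smooth source $G$ built from the lower-order and nonlinear terms, solve the mixed-type problem for $\omega_g$ with the data \eqref{dwttm1}--\eqref{dwttm2} constrained by \eqref{mc1}, and obtain $\omega_g \in H^{k_\ast}_x(0,1;H^1_y)$, equivalently $\p_x^k u_R \in L^2_x L^2_y$ for $1\le k\le k_\ast$. For this I would invoke Theorem \ref{thm:toy:main}: given $G$ one must correct the prescribed left/right data by (i) the polynomial tails $(Q_L,Q_R)$ of the form $\chi(\rho)\sum q_{\cdot,2+3(k-1)}\rho^{2+3(k-1)}$ and (ii) the finite-dimensional span of the $\bold{e}_0^{(k)},\bold{e}_1^{(k)}$, i.e. by the $4k_\ast$ numbers $\bold{A}_{k_\ast}$ needed to annihilate the obstruction functionals $\ell_0^{(k)},\ell_1^{(k)}$ (whose nontriviality/surjectivity is the content of Proposition \ref{PropconINTRO} and its $k$-fold iterate, proved via the degree counting argument). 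Since $(\overline F_{Left},\overline F_{Right})\in\overline{\mathcal{P}}_{k_\ast}$ already kills the $\ell_0^{(k)},\ell_1^{(k)}$ for the background source, the map $(\overline F_{Left},\overline F_{Right})\mapsto (F_{Left},F_{Right})$ is the identity plus a finite-rank (hence compact) correction depending on $G$; this is the operator $\mathcal{N}_{side}$.

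The nonlinear closure is then a contraction mapping / Schauder argument: define $\Phi$ sending $\omega_g$ to the solution of the corrected linear problem with source $\eps\mathcal{Q}(\omega_g)+(\text{linear nonlocal remainder})$ and data $\mathcal{N}_{side}[\overline F_{Left},\overline F_{Right}]$ in a ball of $H^{k_\ast}_x(0,1;H^1_y)$ of radius $O(\|\overline F_{Left},\overline F_{Right}\|_{\mathcal{P}})$. The a priori estimates of Theorem \ref{thm:pri:1}, applied with the smoothness now legitimately available, give the needed $o_L(1)$ gain so that $\Phi$ maps the ball into itself and contracts for $\eps<\eps_\ast$ and $L$ away from the $L_k$; uniqueness of $\psi_R$ follows from the contraction. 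I would then record that $\omega_g$ indeed attains the data \eqref{dwttm1}--\eqref{dwttm2} (this is built into the linear solve) and translate $\omega_g$ back to $u_R$ and $\psi_P$ via the definition of $\omega_g$ and \eqref{remsum1}.

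The main obstacle I expect is the interaction between the nonlocal term $v_R\p_y u_{FS}$ and the higher-$\p_x$ regularity: each extra $\p_x$ on $v_R=-\int_0^y\p_x u_R$ is one more $\p_x$ on $u_R$, and because the $L^{1/3}$ fast scale in the data is destroyed by the vertical integration, the higher tangential derivatives of the source inherit slow scales with their own amplitudes, so one cannot simply feed $G$ into the model theorem as a black box — the functionals $\ell_0^{(k)},\ell_1^{(k)}$ and the polynomial corrections $Q_L,Q_R$ must be shown to remain well-defined and the degree-counting non-degeneracy must survive in the presence of these $\eps$- and $L$-dependent perturbations of the operator. Controlling this, i.e. proving that the finite-dimensional correction $\mathcal{N}_{side}$ stays a compact perturbation of the identity uniformly and that the obstruction functionals do not become linearly dependent after adding the Prandtl-specific linear terms, is the crux; everything else is a perturbation of the linear theory plus the a priori estimates already granted by Theorem \ref{thm:pri:1}.
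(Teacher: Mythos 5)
Your proposal captures the paper's overall strategy: solve a linear mixed-type (Airy) problem with data corrected by the finite set of scalars $\bold{A}_{k_\ast}$, then close nonlinearly via iteration using the $o_L(1)$ smallness from \eqref{abd}--\eqref{bbd}, and identify $\mathcal{N}_{side}$ as identity plus a finite-dimensional (hence compact) correction. But you flag, without resolving, the crux of the argument: how to feed the nonlocal, $\psi$-dependent source $G$ into the linear theory without destroying the constraint structure. The paper resolves this by a specific \emph{decoupling} that your scheme does not have. During the iteration the relation $\Xi = \Omega\,\chi_I$ is deliberately \emph{not} enforced: at step $n+1$ one solves the Airy model \eqref{it:it:1} for $\Xi_k^{(n+1)}$ using the old source $G_k^{(n)}$, and then \emph{separately} solves the full Prandtl system \eqref{ukstruc1} away from the interface with the traces $\psi^{(k,n+1)}_{Left},\psi^{(k,n+1)}_{Right}$ to define $\psi^{(n+1)},\Omega^{(n+1)}$. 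This is what makes $\psi^{(n+1)}$ amenable to the energy estimates of \cite{IM22} at each step and what tames the nonlocal $v_R\p_y u_{FS}$ term; iterating a single map $\Phi$ acting on $\omega_g$ alone (as you propose) leaves unclear how the nonlocal $\psi$ is reconstructed and estimated. Relatedly, the worry you raise about the obstruction functionals becoming degenerate under the Prandtl-specific perturbations is resolved in the paper not by reproving nondegeneracy of perturbed functionals, but by keeping the dual profiles $\Phi^{(j)}$ and the spanning functions $\bold{e}_{-1},\bold{e}_0^{(k)},\bold{e}_1^{(k)}$ \emph{fixed} (tied to the constant-coefficient model), pushing all $\eps$- and $L$-dependent perturbations into the right-hand-side functionals $\beta_j^{(k)},\alpha_j^{(k)}$, which are then controlled by Lemmas~\ref{lem:2}--\ref{lem:3}; only the scalars $\bold{A}_{k_\ast}^{(n)}$ update. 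Finally, because $\Xi_k^{(n)}$ depends on the $k$ prior iterates (the ``lagging'' structure \eqref{ttu1}--\eqref{ttu4}), the convergence argument is a difference estimate over a window of $k_\ast$ iterates rather than a one-step contraction; worth noting when you write ``contraction mapping / Schauder.''
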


Taken together with the stability analysis in Theorem \ref{thm:pri:1}, we have the following: 
\begin{theorem}[Existence and Stability] \label{thmex} Fix an arbitrary $(\overline{F}_{Left}, \overline{F}_{Right}) \in \overline{\mathcal{P}}_{k_\ast}$ where $k_\ast$ is chosen sufficiently large relative to universal constants. There exists an operator $\mathcal{N}_{side}: \overline{\mathcal{P}}_{k_\ast} \mapsto \mathcal{P}$, which is a compact perturbation of the identity, such that the following holds. Let $(F_{Left}, F_{Right}) := \mathcal{N}_{side}[\overline{F}_{Left}, \overline{F}_{Right}]$. Let $L \in (0, L_{Max}) \setminus \cup_{k = 1}^N L_k$. There exists an $\eps_{\ast} > 0$, small relative to $\min_{k} |L - L_k|$ such that for any $0 < \eps < \eps_{\ast}$, the following is valid. 

\begin{itemize}

\item There exists a unique solution $\psi_R$  such that $\omega_g$ achieves the given data as shown in \eqref{dwttm1} -- \eqref{dwttm2}. Moreover, this solution has the regularity $\p_x^k u_R \in L^2_x H^1_y$ for each $k$, $0 \le k \le k_{\ast}$.

\item The solution obeys the stability estimates 
\begin{align}  \label{stable:2:ult}  
\sum_{k = 0}^{k_{\ast}} \| (L \p_x)^k \psi_R \|_{L^\infty_y L^6_x} + \| (L \p_x)^k u_R \|_{L^\infty_y L^6_x} + \|(L \p_x)^k \omega_R \|_{L^\infty_y L^6_x} \le &o_L(1) \| \bar{F}_{Left}, \bar{F}_{Right} \|_{\mathcal{P}}.
\end{align}

\item The data on the sides can be described precisely as follows 
\begin{subequations}
\begin{align}
(F_L, F_R) =& (\widehat{F}_L, \widehat{F}_R) + (Q_L, Q_R) \\ \label{thmdata1}
(\widehat{F}_L, \widehat{F}_R) =& ( \bar{F}_{Left}, \bar{F}_{Right}) +c_{-1}\bold{e}_{-1} + \sum_{k = 1}^{k_\ast} (c_0^{(k)} \bold{e}^{(k)}_0 + c^{(k)}_1 \bold{e}^{(k)}_1) \\ \label{thmdata2}
(Q_L, Q_R) = & (\chi(\rho) \sum_{k = 1}^{k_\ast} q_{L, 2 + 3(k-1)} \rho^{2 + 3(k-1)} , \chi(\rho) \sum_{k = 1}^{k_\ast} q_{R, 2 + 3(k-1)} \rho^{2 + 3(k-1)}  )
\end{align}
\end{subequations}
where $\chi, \bold{e}_{-1}, \bold{e}_0^{(k)}, \bold{e}_1^{(k)}$ are fixed functions, and the $4k_\ast + 1$ numbers
\begin{align}
\bold{A}_{k_\ast} := (c_0^{(k)}, c_1^{(k)}, c_{-1}, q_{L, 2 + 3(k-1)}, q_{R, 2 + 3(k-1)})_{k = 1}^{k_\ast} 
\end{align}
are determined by the prescribed functions $(\bar{F}_{Left}, \bar{F}_{Right}) \in \bar{\mathcal{P}}_{k_\ast}$ and obeys the estimate 
\begin{align}
|\bold{A}_{k_\ast} |_{\mathbb{R}^{4k_\ast + 1}} \lesssim o_L(1) \| (\bar{F}_{Left}, \bar{F}_{Right}) \|_{\bar{\mathcal{P}}_{k_\ast}}.
\end{align}
\end{itemize}

\end{theorem}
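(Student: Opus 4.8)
The plan is to deduce Theorem \ref{thmex} simply by concatenating Theorem \ref{thm:twomain} (Existence) with Theorem \ref{thm:pri:1} (Stability), using the freedom built into the operator $\mathcal{N}_{side}$. First I would invoke Theorem \ref{thm:twomain}: given $(\overline{F}_{Left}, \overline{F}_{Right}) \in \overline{\mathcal{P}}_{k_\ast}$, set $(F_{Left}, F_{Right}) := \mathcal{N}_{side}[\overline{F}_{Left}, \overline{F}_{Right}]$. This immediately produces the unique solution $\psi_R$ with $\omega_g$ achieving the boundary data \eqref{dwttm1}--\eqref{dwttm2} and the base regularity $\p_x^k u_R \in L^2_x L^2_y$. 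The only mild subtlety is that Theorem \ref{thm:twomain} outputs $L^2_x L^2_y$ regularity whereas Theorem \ref{thmex} claims $L^2_x H^1_y$; I would close this gap by a short elliptic-in-$y$ bootstrap, differentiating the $\omega_g$ equation in $y$ and using the fact that the boundary data are smooth and compactly supported away from $\{y=0\}$, so there is no boundary-layer obstruction to picking up the extra $\p_y$. Here one must also check the compatibility constraint \eqref{mc1} is respected by $\mathcal{N}_{side}$, i.e. that its image lands in the admissible class for Theorem \ref{thm:pri:1} — this should be part of the construction of $\mathcal{N}_{side}$ and I would just cite it.

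Next I would feed this solution into Theorem \ref{thm:pri:1}. Fix $L_{Max}$, obtain the resonant set $\{L_k\}_{k=1}^N$ and the universal constants $M_{Max}, J_{Max}, k_\ast$; choose $L \in (0,L_{Max}) \setminus \cup_k L_k$ and the corresponding $\eps_\ast$. Since the constructed $\omega_g$ lies in $H^{k_\ast}_x(0,1;H^1_y)$ — precisely the qualitative smoothness class in which the a-priori estimates of Theorem \ref{thm:pri:1} are valid — the estimate \eqref{stable:2} applies verbatim and gives the bound in terms of $\| F_{Left}, F_{Right} \|_{\mathcal{P}}$. The final step converting this to a bound in terms of $\| \bar{F}_{Left}, \bar{F}_{Right} \|_{\mathcal{P}}$ (as in \eqref{stable:2:ult}) uses that $\mathcal{N}_{side}$ is a compact perturbation of the identity, hence in particular bounded: $\| \mathcal{N}_{side}[\bar{F}_{Left},\bar{F}_{Right}]\|_{\mathcal{P}} \lesssim \| (\bar{F}_{Left}, \bar{F}_{Right})\|_{\bar{\mathcal{P}}_{k_\ast}}$, so the $o_L(1)$ prefactor is preserved.

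For the last bullet — the explicit description of the data \eqref{thmdata1}--\eqref{thmdata2} and the estimate on $\bold{A}_{k_\ast}$ — I would unwind the internal structure of $\mathcal{N}_{side}$ as produced in the proof of Theorem \ref{thm:twomain}. That construction (mirroring the scalar Theorem \ref{thm:toy:main}, with the one extra degree of freedom $c_{-1}\bold{e}_{-1}$ coming from the additional nonlocal direction of instability in Prandtl versus Burgers) already expresses the corrected data as the sum of the prescribed $(\bar F_{Left}, \bar F_{Right})$, a finite linear combination of fixed functions $\bold{e}_{-1}, \bold{e}_0^{(k)}, \bold{e}_1^{(k)}$, and the polynomial-in-$\rho$ correction $(Q_L,Q_R)$; the $4k_\ast+1$ coefficients $\bold{A}_{k_\ast}$ are the solution of the finite linear system whose invertibility is exactly what the degree counting argument establishes. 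The bound $|\bold{A}_{k_\ast}|_{\mathbb{R}^{4k_\ast+1}} \lesssim o_L(1)\|(\bar F_{Left},\bar F_{Right})\|_{\bar{\mathcal{P}}_{k_\ast}}$ then follows from the uniform invertibility of that system together with the smallness (in $L$) of its right-hand side, which in turn comes from the $o_L(1)$ factors already present in the stability estimates feeding the construction.

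The main obstacle is not in this concatenation — which is essentially bookkeeping — but lies upstream, in establishing the uniform (in $L$, away from the resonances) invertibility of the finite linear system defining $\bold{A}_{k_\ast}$; that is precisely where the degree-counting argument and the control-theoretic input are needed, and it is handled in the proof of Theorem \ref{thm:twomain}. The only genuine work at this stage is verifying that the $L^2_x L^2_y \to L^2_x H^1_y$ upgrade and the compatibility of $\mathcal{N}_{side}$ with constraint \eqref{mc1} hold, and that the smallness of $\eps_\ast$ relative to $\min_k |L-L_k|$ required by Theorem \ref{thm:pri:1} is compatible with any smallness already imposed in Theorem \ref{thm:twomain} — one takes the minimum of the two thresholds.
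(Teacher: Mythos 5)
Your proposal is correct and follows exactly the paper's (implicit) strategy: Theorem \ref{thmex} is obtained by concatenating the existence result of Theorem \ref{thm:twomain} with the a-priori estimates of Theorem \ref{thm:pri:1}, feeding the bound $|\bold{A}_{k_\ast}|\lesssim o_L(1)\|\psi\|_{\mathcal{Z}}$ from Proposition \ref{prop:real} and the boundedness of $\mathcal{N}_{side}$ through the chain \eqref{abd}--\eqref{bbd}. You also correctly surface the two bookkeeping subtleties the paper leaves tacit (the $L^2_y\to H^1_y$ upgrade needed to enter the hypothesis class of Theorem \ref{thm:pri:1}, and the compatibility of $\mathcal{N}_{side}$ with the constraint \eqref{mc1}, which is indeed built into \eqref{dg:m5:e}).
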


\subsection{Related Literature}

There is a wide literature on the Prandtl theory which reflects the many available points of view and settings in which the boundary layer theory has been studied. We will restrict here to the works on the stationary Prandtl system, and we refer the interested reader to our paper \cite{IM22} which contains a much more in-depth discussion of the literature.   

The first rigorous works on the stationary Prandtl system were performed by Oleinik, \cite{Oleinik}, \cite{Oleinik1}. Oleinik introduced the point-of-view that $x$ behaves ``time-like" and parametrizes the evolution of \eqref{eq:PR:0}, (as long as $u_P \ge 0$). Her results proved local well-posedness $0 < L << 1$ of \eqref{eq:PR:0} under very mild hypothesis on the initial data, and global well-posedness under the assumption of a favorable pressure gradient, $\p_x p_E(x) \ge 0$. These results were based upon using maximum principle and comparison principle methods.  

Oleinik's local well-posedness result proves essentially $C^2$ regularity of the solution up to the boundary. Higher regularity was an open question until recently. The work of Guo-Iyer in \cite{GI2} obtained higher-regularity estimates using an energy based method. In addition, the work of Wang-Zhang, \cite{Zhifei:smooth} performed a maximum-principle based analysis to obtain $C^\infty$ regularity up to the boundary. 

Regarding global solutions, which only generically exist in the presence of a favorable pressure gradient, the work of Serrin, \cite{Serrin}, distinguishes the Falkner-Skan profiles as large-$x$ ``attractors" of the dynamics. The work of Iyer, \cite{IyerBlasius} quantifies more precisely the large-$x$ asymptotic stability of specifically the Blasius profile for small, localized initial perturbations. 

When the assumption of a favorable pressure gradient is removed, the physical phenomena of boundary-layer separation discussed above is expected to generically appear. This remarkably was predicted on physical grounds by Prandtl himself in his original work, \cite{Prandtl}. This ``boundary layer separation" was proven rigorously by Dalibard-Masmoudi in \cite{MD} and, using completely different methods, by Shen, Wang, and Zhang, \cite{Zhangsep}. 

We now move to the question of reversed flows. Here, we have our companion work, \cite{IM22}, which obtained the stability estimates shown in Theorem \ref{thm:pri:1}. Due to the mixed-type nature of the problem, obtaining \textit{a-priori} estimates does not immediately result in the construction of a regular solution. This is a common feature of mixed-type problems. 

Regarding the construction of a solution, we point the reader towards the excellent recent work of Dalibard-Marbach-Rax, \cite{DMR}, which is closely related to the current paper. We also point the reader to the references \cite{Pagani1}, \cite{Pagani2} who studied a similar linear mixed-type problem to $y \p_x u - \p_y^2 u = 0$.  In \cite{DMR}, the authors consider the model problem $(\bar{u} + \eps u) \p_x u - \p_y^2 u = 0$, where $\bar{u}$ undergoes a change of sign. The authors importantly identify a series of constraints which need to be satisfied by the data on the sides in order to guarantee the existence of a sufficiently regular solution. In the present work, we use these constraints as a starting point towards the construction of our solution, Theorem \ref{thm:twomain}. However, we emphasize that apart from adopting the constraints derived in \cite{DMR}, the methods in this paper are considerably different due to the more complicated nature of the Prandtl system. Explaining our strategy will be the purpose of the next subsection.

\subsection{Overview of Strategy for Theorem \ref{thm:twomain}}

\paragraph{\textsc{Closing the ``Loop"}} Given \eqref{data:set}, the main difficulty is that the numbers in $\bold{A}_{k_\ast}$ depend on $G$, which, in the real problem depends on the solution itself. Therefore, our entire strategy is devoted to ensuring the ``loop" closes: the solution depends on the data, but now the data also depends back on the solution. In the end, we will show the following inequalities:
\begin{subequations} 
\begin{align} \label{abd}
\| \psi \|_{\mathcal{Z}} \le & \| F_{Left}, F_{Right} \|_{\mathcal{P}} \qquad && \text{(proven in \cite{IM22})} \\ \label{bbd}
|\bold{A}_{k_\ast} |_{\mathbb{R}^{4k_\ast + 1}}  \lesssim &o_L(1) \| \psi \|_{\mathcal{Z}} \qquad  &&\text{(proven in present paper)}. 
\end{align}
\end{subequations}
The sequence of bounds \eqref{abd} -- \eqref{bbd} closes due to the small $o_L(1)$ factor in \eqref{abd}. The main objective of this paper, is to therefore establish the second inequality, \eqref{bbd}, on the scalars appearing in $\bold{A}_{k_\ast}$.

\paragraph{\textsc{Perturbative Formulation}} For this discussion, we consider the case when $0 < L << 1$, which requires the majority of the effort to establish. In this case, our starting point is the system \eqref{loc:sys:1} in the Appendix, which we recall here:
\begin{subequations} \label{loc:sys:1:intro} 
\begin{align}
&Z \p_s \Omega_{k,I} - \p_Z^2 \Omega_{k,I} =   F_{k,I}, \qquad (s, Z) \in (1, 1 + \bar{L}) \times \mathbb{R} \\
&\Omega_I(s, \pm \infty) = 0,  \qquad s \in (1, 1+\bar{L}) 
\end{align}
\end{subequations}
This formulation requires taking advantage of several cancellations in the Prandtl system (through various changes of variables and changes of unknowns) which was performed in our stability paper, \cite{IM22}. Our starting point for this work is to introduce the rescalings: 
\begin{align}
t := \frac{s-1}{L}, \qquad \rho := \frac{Z}{L^{\frac13}}, 
\end{align} 
which, for $0 < L << 1$, correspond to a fast scale near the interface $\{Z = 0\} = \{\rho = 0\}$. Under these rescalings, we observe that \eqref{loc:sys:1:intro} transforms into the normalized domain 
\begin{subequations} \label{normdom1} 
\begin{align}
&\rho \p_t \Xi_{k} - \p_\rho^2 \Xi_{k} =  L^{\frac23} G_{k}, \qquad (t, \rho) \in (0, 1) \times \mathbb{R} \\
&\Xi_{k}(t, \pm \infty) = 0,  \qquad t \in (0, 1), \\
&\Xi_{Left}^{(k)}(\rho) := \Xi_k(0, \rho), \qquad \rho > 0, \\
&\Xi_{Right}^{(k)}(\rho) := \Xi_k(1, \rho), \qquad \rho < 0,
\end{align}
\end{subequations}
where $\Xi_k(t, \rho) = \Omega_{k,I}(s, Z)$ and $G_{k}(t, \rho) = F_{k,I}(s, Z)$. The presence of the scaling factor $L^{\frac23}$ appearing on the right-hand side above reflects the fact that the left-hand side gains $2/3$ tangential derivatives over the right-hand side. We also emphasize at this point that the highest order derivative terms in $G_k$ will be of the form $L^{-\frac13} \p_{\rho} \Xi_k$ and the ``quasilinear term" $L^{-\frac13} \rho \p_{\rho}^2 \Xi_k$. To treat the latter term, we will use that the weight $\rho$ vanishes at $\{\rho = 0\}$.

Another aspect of our formulation is that, instead of taking $\p_t$ of the equation \eqref{normdom1}, we will study \eqref{normdom1} $k$ by $k$. In other words, $\Xi_{k+1} \neq \p_t \Xi_k$, but rather 
\begin{align} \label{slashD}
\Xi_{k+1} =  \p_t \Xi_k +\Big( \frac{\p_t \bar{w}_z}{\bar{w}_z} \Xi_k+ \frac{1}{\bar{w}_z^2} \p_t \Big( \frac{\bar{w}_{zz}}{\bar{w}_z} \Big) u_k \Big) \chi\Big( \frac{\overline{L}^{\frac13}\rho}{\delta} \Big) = \slashed{D} \Xi_k
\end{align}
where we will have a use for the following ``error" term:
\begin{align} \label{defslashE}
\slashed{E}_k :=\Big(  \frac{\p_t \bar{w}_z}{\bar{w}_z} \Xi_k+ \frac{1}{\bar{w}_z^2} \p_t \Big( \frac{\bar{w}_{zz}}{\bar{w}_z} \Big) u_k\Big) \chi\Big( \frac{\overline{L}^{\frac13}\rho}{\delta} \Big).
\end{align}

\paragraph{\textsc{Intrinsic Constraints}} By re-applying the constraints in \eqref{const:const:I} to the system \eqref{loc:sys:1:intro}, we obtain for each $1 \le k \le k_{\ast}$, the following: 
\begin{subequations} \label{const:const:k}
\begin{align} \label{dg:c:k}
\ell_1[\Xi^{(k-1)}_L, \Xi^{(k-1)}_R] = & \beta_1(G^{(k-1)}_L, G^{(k-1)}_R) + \alpha_1(G_k),  \\  \label{dg:d:k}
\ell_0[\Xi^{(k-1)}_L, \Xi^{(k-1)}_R] = & \beta_0(G^{(k-1)}_L, G^{(k-1)}_R) + \alpha_0(G_k), 
\end{align}
\end{subequations}
The strategy is then to use the equation \eqref{loc:sys:1:intro} to obtain inductive relations 
\begin{align} \label{inductive:form:1}
\Xi^{(k+1)}_{Left}(\rho) = \frac{(\Xi^{(k)}_{Left})'' + L^{\frac23} G^{(k)}_{Left}}{\rho}, \qquad \Xi^{(k+1)}_{Right}(\rho) = \frac{(\Xi^{(k)}_{Right})'' + L^{\frac23} G^{(k)}_{Right}}{\rho}
\end{align}
to reformulate \eqref{const:const:k} into \textit{intrinsic} constraints on the two given data: $\Xi_{Left}, \Xi_{Right}$. This process is detailed and involved due to the dependence of the right-hand side, $G_k$ on linear (and nonlinear) terms depending on $\Xi$, and in turn this means $G^{(k)}_{Left}$ depends on $\Xi_{Left}$ and $G^{(k)}_{Right}$ depends on $\Xi_{Right}$. 

Therefore, instead of demanding a simple and explicit set of constraints (as shown in \eqref{const:const}), we extract the following implicit set of intrinsic constraints: 
\begin{subequations} \label{const:real}
\begin{align}
\ell_{1 + 3(k-1)}(\Xi_{L}, \Xi_{R}) = & \alpha_{1}(G_k) +  \beta_{1 + 3(k-1)}(G_L^{(k-1)}, G_R^{(k-1)}, \dots, G_L, G_R) \\ 
\ell_{0 + 3(k-1)}(\Xi_{L}, \Xi_{R}) = & \alpha_{0}(G_k) +  \beta_{0 + 3(k-1)}(G_L^{(k-1)}, G_R^{(k-1)}, \dots, G_L, G_R),
\end{align}
\end{subequations}
We need to explain two types of quantities appearing on the right-hand side above, which is the content of the forthcoming paragraphs. It is worth emphasizing that the form \eqref{const:real} emerges from the mechanics of inserting \eqref{inductive:form:1} into \eqref{dg:c:k} -- \eqref{dg:d:k}. However, our point-of-view is to now regard the functionals $\beta_{j}$ as having two components: primarily a dependence on $\Xi_L, \Xi_R$ itself (as these are local quantities evaluated on the tangential boundaries), but also a weak dependence on the interior solution due to the nonlocal term $\psi$.

We therefore rewrite \eqref{const:real} into the form
\begin{subequations} \label{const:real:2}
\begin{align}
\ell_{1 + 3(k-1)}(\Xi_{L}, \Xi_{R}) = & \mu_{k,1} +  \beta_{1 + 3(k-1)}(\Xi_L, \Xi_R), \\ 
\ell_{0 + 3(k-1)}(\Xi_{L}, \Xi_{R}) = & \mu_{k,0} +  \beta_{0 + 3(k-1)}(\Xi_L, \Xi_R),
\end{align}
\end{subequations}
where above $\mu_{k,1} = \alpha_1(G_k)$ and $\mu_{k,0} = \alpha_0(G_k)$.

\paragraph{\textsc{The $\mathcal{Z}$ norm}} We turn first to the presence of the numbers $\nu_k, \lambda_k, \mu_{k,0}, \mu_{k,1}$ on the right-hand side of \eqref{const:real}. These numbers are ``live" in the sense that that they are determined by the whole solution itself, through the integrals $L^{\frac23} \int \int_0^\infty \p_t G^{(k)}$ and $L^{\frac23} \int \int_0^\infty  \p_t G^{(k)} \rho$ appearing in \eqref{const:const:k}. Therefore, we need to estimate these quantities in the following way: 
\begin{align} \label{cont:nu}
 |\mu_{k,0}| + |\mu_{k,1}| \le o_L(1) \| \Omega \|_{\mathcal{Z}}, \qquad 1 \le k \le k_{\ast}
\end{align}
When $L << 1$, this gives rise to an iterative argument: the solution depends on the data, but the data also depends on the solution (in a weak way). We also would like to emphasize that the control \eqref{cont:nu} is highly nontrivial, and it utilizes the full strength of our powerful $\mathcal{Z}$ norm.

\paragraph{\textsc{Small Lipschitz Perturbations}} We turn next to the presence of the function $\beta(\cdot, \cdot)$ on the right-hand side of \eqref{const:const:k}. This will be a function (which can change from line to line) with a Lipschitz dependence on the data. More specifically, let $\vec{F} := (F_{Left}, F_{Right})$ and $\vec{G} := (G_{Left}, G_{Right})$. Then $\beta(\cdot, \cdot)$ satisfies for $\| F \|_{\mathcal{P}}, \| G \|_{\mathcal{P}} < \infty$ the bound 
\begin{align*}
|\beta(\vec{F}) - \beta(\vec{G})| \lesssim \| \vec{F} - \vec{G} \|_{\mathcal{P}},
\end{align*} 

To understand the method of satisfying the constraints \eqref{const:real}, we can consider a simpler toy problem. Suppose, for the sake of a toy problem, we wish to satisfy the constraint
\begin{align} \label{toy:con}
F_{Left}(0) = \nu  + \ell \int |F_{Left}'|^2 \ud \rho, 
\end{align}
where $\nu \in \mathbb{R}$ is a given number and $|\ell| = o_L(1) << 1$. The description of a set (a manifold) which satisfies this constraint can be achieved in the following manner. Fix a function $F_{\parallel}$ which spans the $\ell = 0$ constraint (so $F_{\parallel}(0) = 1$), and let $\overline{F} \in P^{\perp} := \{ F : F(0) = 0 \}$ be the $\ell = 0$ ``perpendicular" subspace. We will seek solutions to \eqref{toy:con} in the form
\begin{align}
F = \overline{F} +  \widetilde{\nu} F_{\parallel} =\overline{F} +  \widetilde{\nu}[\overline{F}; \ell] F_{\parallel}.   
\end{align}
Above, the modified number $\widetilde{\nu}$ depends on the choice of $\overline{F} \in P^\perp$ (hence we obtain a manifold), and moreover satisfies the bound 
\begin{align}
|\widetilde{\nu} - \nu| \lesssim o_L(1). 
\end{align}
To perform the construction, we can simply proceed iteratively using the smallness of $L << 1$. Indeed, inserting $\widetilde{\nu} = \nu + \ell \nu_1$, we can collect terms to find 
\begin{align} \n
\nu_1 = & \Big( \int |\overline{F}'|^2 + 2 \nu \overline{F}' F_{\parallel}' + \nu^2 |F_{\parallel}'|^2 \Big) + \ell \nu_1 \Big(\int \overline{F}' F_{\parallel}' + 2 \nu |F_{\parallel}'|^2 \Big)  + \ell^2 \nu_1^2 \int |F_{\parallel}'|^2. 
\end{align}
At this stage, it is possible to show the existence of a solution, $\nu_1$, to the above algebraic equation. Alternatively, one can choose the leading order for $\nu_1 = \Big( \int |\overline{F}'|^2 + 2 \nu \overline{F}' F_{\parallel}' + \nu^2 |F_{\parallel}'|^2 \Big)$. This creates an error of size $\ell^2$, and we can iterate this procedure for a modified guess $\widetilde{\nu} = \nu + \ell \nu_1 + \ell^2 \nu_2$.  

We observe that the key point which forces this iteration to converge is that the perturbation $\int |F_{Left}'|^2$ appearing in \eqref{toy:con} is Lipschitz with respect to a norm $\| \cdot \|_{\mathcal{P}}$, where the spanning vectors $\| \overline{F} \|_{\mathcal{P}}$ and $\| F_{\parallel} \|_{\mathcal{P}}$ are $O(1)$. 

\section{Results on Burgers Equation}

\subsection{$\p_t$ Regularity of Strong Solutions}

We first consider the model problem:
\begin{subequations} \label{modelpb1}
\begin{align}
&\rho \p_t \Omega - \p_\rho^2 \Omega = G, \qquad (t, \rho) \in (0, 1) \times \mathbb{R}\\
&\Omega|_{t = 0} = \Xi_{L}, \qquad \rho > 0 \\
&\Omega|_{t = 1} = \Xi_{R}, \qquad \rho < 0, \\
&\Omega|_{\rho \rightarrow \pm \infty} = 0. 
\end{align}
\end{subequations}
Here, the source term $G$ is considered a given forcing on which we have no choice. Eventually $G$ will be replaced with perturbative linear and nonlinear terms. For this problem, we have the existence of weak solutions, as stated in \cite{DMR}:
\begin{lemma} Let $G \in L^2((0, 1); H^{-1}(\mathbb{R}))$. There exists a unique strong solution $\Omega \in C^0((0, 1); H^1(\mathbb{R}))$ to \eqref{modelpb1}. 
\end{lemma}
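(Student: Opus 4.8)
The plan is to argue variationally: existence via a Lions-type projection theorem for a non-coercive bilinear form, with the key point that the change of sign of the weight $\rho$ in front of $\p_t$ is \emph{compensated} by the split placement of the data (the datum $\Xi_L$ is posed at $t=0$ on $\{\rho>0\}$ and $\Xi_R$ at $t=1$ on $\{\rho<0\}$, matching the forward/backward parabolicity on the two sides). First I would reduce to vanishing side data by subtracting a fixed lift $\Phi(t,\rho)$ — smooth, decaying as $\rho\to\pm\infty$, with $\Phi(0,\cdot)=\Xi_L$ on $\{\rho>0\}$ and $\Phi(1,\cdot)=\Xi_R$ on $\{\rho<0\}$ — so that $\Omega-\Phi$ solves the same equation with a modified source still in $L^2((0,1);H^{-1}(\R))$ (for side data of the appropriate regularity) and with homogeneous split conditions. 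Thus from now on I may assume $\Xi_L=\Xi_R=0$ and keep calling the source $G$.

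For the weak formulation I would test against functions $\varphi$ that are smooth, Schwartz in $\rho$, and carry the \emph{opposite} split conditions $\varphi(1,\rho)=0$ for $\rho>0$ and $\varphi(0,\rho)=0$ for $\rho<0$ (those of the formal adjoint $-\rho\p_t-\p_\rho^2$). Integrating by parts in $t$ and $\rho$ — and using precisely the matching of the two sets of split conditions to kill all boundary terms — a solution satisfies, for every such $\varphi$,
\[a(\Omega,\varphi):=\int_0^1\!\!\int_{\R}\big(-\rho\,\Omega\,\p_t\varphi+\p_\rho\Omega\,\p_\rho\varphi\big)\,\ud\rho\,\ud t=\langle G,\varphi\rangle.\]
The heart of the matter is the coercivity identity, obtained by integrating $-\rho\,\varphi\,\p_t\varphi$ in $t$ and splitting the $\rho$-integral:
\[a(\varphi,\varphi)=\|\p_\rho\varphi\|_{L^2((0,1)\times\R)}^2+\tfrac12\int_{\rho>0}\rho\,\varphi(0,\rho)^2\,\ud\rho+\tfrac12\int_{\rho<0}|\rho|\,\varphi(1,\rho)^2\,\ud\rho\ \ge\ 0,\]
so the backward-parabolic region $\{\rho<0\}$ contributes a \emph{gain} rather than an obstruction. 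I would then equip the test space with norm $a(\varphi,\varphi)^{1/2}$, take the solution space to be the closure of smooth, $\rho$-decaying functions under $\|\p_\rho\cdot\|_{L^2((0,1)\times\R)}$, verify the hypotheses of the Lions projection theorem (continuity of $v\mapsto a(v,\varphi)$ on the solution space, coercivity of $a$ on the test space, continuity of $\varphi\mapsto\langle G,\varphi\rangle$), and thereby produce a weak solution.

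To upgrade to a strong solution I would rewrite the equation as $\p_\rho^2\Omega=\rho\,\p_t\Omega-G$. Away from the interface $\{\rho=0\}$ the operator is genuinely forward (resp.\ backward) parabolic, so interior parabolic estimates give local regularity on $(0,1)\times(\R\setminus\{0\})$; near $\{\rho=0\}$ the vanishing of $\rho$ is an advantage, damping $\p_t\Omega$ so that $\p_\rho^2\Omega$ stays in $L^2_{\mathrm{loc}}$. Combining the energy bound $\p_\rho\Omega\in L^\infty((0,1);L^2(\R))$ with time-continuity of $\rho\,\Omega$ (read off from $\p_t(\rho\Omega)=\p_\rho^2\Omega-G$ lying in a space of negative order) and an Aubin--Lions/interpolation argument yields $\Omega\in C^0((0,1);H^1(\R))$; alternatively one can run the elliptic regularization $\varepsilon\p_t^2\Omega_\varepsilon+\rho\p_t\Omega_\varepsilon-\p_\rho^2\Omega_\varepsilon=G$ with split data, obtain $\varepsilon$-uniform bounds, and let $\varepsilon\downarrow0$.

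Uniqueness is then immediate from the same coercivity identity: with $G=0$ and vanishing side data, testing with $\Omega$ gives $a(\Omega,\Omega)=0$, hence $\p_\rho\Omega\equiv0$, and together with the decay $\Omega\to0$ as $\rho\to\pm\infty$ this forces $\Omega\equiv0$. I expect the two genuinely delicate points to be: (i) choosing the pair of Hilbert spaces so that both the continuity and the coercivity hypotheses of the Lions theorem hold at once — the energy identity controls only the $\rho$-derivative of $\varphi$, so one needs a Poincar\'{e}-type input funneled through the boundary traces; and (ii) the passage from a weak to a genuinely strong solution across the degeneracy line $\{\rho=0\}$, where parabolicity is lost and the vanishing of $\rho$ must be exploited rather than circumvented.
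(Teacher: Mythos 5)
The paper does not supply its own proof of this lemma; it is cited from \cite{DMR}, which in turn builds on the forward--backward model problems of Baouendi--Grisvard and of Pagani \cite{Pagani1, Pagani2}. Your Lions-projection framework with complementary split conditions on test and solution spaces is essentially the right skeleton and matches that literature in spirit.

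Two points are off. First, the regularity step: the coercivity identity you write controls $\|\p_\rho\varphi\|_{L^2((0,1)\times\R)}$ and the two weighted boundary traces, so the variational argument delivers a weak solution with $\p_\rho\Omega\in L^2_tL^2_\rho$ only. When you then write ``Combining the energy bound $\p_\rho\Omega\in L^\infty((0,1);L^2(\R))$...'', you are invoking an $L^\infty$-in-time bound that nothing in the outline has produced, and the obvious second energy estimates do not close for free: multiplying by $\p_t\Omega$ gives $\int\rho(\p_t\Omega)^2$, which has no sign because $\rho$ changes sign, and multiplying by $-\p_\rho^2\Omega$ generates a cross term $\int\!\!\int\p_t\Omega\,\p_\rho\Omega$ with no sign, singular near $\{\rho=0\}$. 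Promoting the weak solution to $\Omega\in C^0((0,1);H^1(\R))$ is precisely where the real work lies in \cite{DMR} and in Pagani (refined analysis near the degeneracy line or an Airy-kernel representation); the sketch treats it as a soft Aubin--Lions consequence, which it is not. Second, uniqueness: $\Omega$ is not in the Lions test space (it carries the primal split conditions, not the adjoint ones), and if you substitute $\Omega$ into $a(\cdot,\cdot)$ literally as written the term $\int(-\rho\,\Omega\,\p_t\Omega)$ comes out \emph{nonpositive} under the primal conditions, so $a(\Omega,\Omega)=0$ does not force $\p_\rho\Omega\equiv 0$. The correct computation is the direct $L^2$-pairing of the strong equation with $\Omega$, i.e.
\begin{align*}
0=\int\!\!\int(\rho\p_t\Omega-\p_\rho^2\Omega)\Omega=\|\p_\rho\Omega\|_{L^2}^2+\tfrac12\int_{\rho>0}\rho\,\Omega(1,\rho)^2\ud\rho+\tfrac12\int_{\rho<0}|\rho|\,\Omega(0,\rho)^2\ud\rho,
\end{align*}
which does close. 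This is a phrasing slip rather than a wrong idea, but it should be kept distinct from the Lions form $a$.
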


We now want to provide constraints on the data, $(\Xi_{L}, \Xi_{R})$, so as to be able to upgrade the solution by one tangential derivative, $\p_t$, of regularity. We have the following result, which is inspired by Proposition 2.7 in \cite{DMR} (but re-formulated for our setting).
%To state a result in this direction, we need to introduce a few operators. For a generic function, $f: \mathbb{R}_+ \mapsto \mathbb{R}$, define the operators 
%\begin{align} \label{calD3op}
%\mathcal{D}_3 f(\rho) := \frac{f''(\rho) - \chi(\rho) f''(0)}{\rho}, \qquad \mathcal{D}_1 f(\rho) = \frac{f(\rho) - \chi(\rho) f(0)}{\rho},  
%\end{align}
%where $\chi(\cdot)$ is a fixed cut-off function, equal to $1$ on $(0, 1)$ and identically zero on $(2, \infty)$. We define also 
%\begin{align}
%\mathcal{G}_{Left}(\rho) := \mathcal{G}(0, \rho), \rho > 0, \qquad \mathcal{G}_{Right}(\rho) := \mathcal{G}(1, \rho), \rho < 0 
%\end{align}
\begin{proposition} \label{Propcon} $G \in H^1((0, 1); L^2(\mathbb{R}))$ There exist nontrivial, independent linear functionals $\ell_1, \ell_0: \mathcal{P} \mapsto \mathbb{R}$, and linear functionals $\beta_0, \beta_1$ such that if the following constraints hold
\begin{subequations} \label{const:const}
\begin{align} \label{con:a}
\Xi_{L}''(0) = &G_{Left}(0), \\ \label{con:b}
\Xi_{R}''(0) = & G_{Right}(0), \\  \label{con:c}
\ell_1(\Xi_{L}, \Xi_{R}) = & \beta_1(G) \\ \label{con:d}
\ell_0(\Xi_{L}, \Xi_{R}) = & \beta_0(G),
\end{align}
\end{subequations}
then the unique weak solution to \eqref{modelpb1} satisfies $\Omega \in H^1((0, 1); H^1(\mathbb{R}))$.
\end{proposition}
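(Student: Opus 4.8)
The plan is to analyze when the strong solution $\Omega$ can be differentiated once in $t$. Write $W := \partial_t \Omega$ (interpreted weakly); formally $W$ solves the same Airy-type equation $\rho \partial_t W - \partial_\rho^2 W = \partial_t G$ with the ``initial'' data $W|_{t=0} = \partial_t\Omega|_{t=0}$ on $\rho>0$ and $W|_{t=1} = \partial_t\Omega|_{t=1}$ on $\rho<0$. The first difficulty is that the traces $\partial_t\Omega|_{t=0}$ and $\partial_t\Omega|_{t=1}$ are \emph{not} freely prescribable: using the equation itself at $t=0$ (for $\rho>0$) we get $\rho\,\partial_t\Omega|_{t=0} = \partial_\rho^2 \Xi_L + G|_{t=0}$, so $\partial_t\Omega|_{t=0}(\rho) = \rho^{-1}(\Xi_L'' + G_{Left})$, and similarly at $t=1$ for $\rho<0$. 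For this to be an admissible datum (in particular in $L^2$ or $H^{-1}$ near $\rho=0$) we must kill the $\rho^{-1}$ singularity at the origin, which forces the pointwise matching conditions $\Xi_L''(0) = G_{Left}(0)$ and $\Xi_R''(0) = G_{Right}(0)$ — these are exactly \eqref{con:a}--\eqref{con:b}. So the first step is: differentiate the equation in $t$, derive the induced data for $W$, and record that \eqref{con:a}--\eqref{con:b} are necessary for that data to lie in the correct space.

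The second step is to set up the solvability/compatibility of the problem for $W$. The operator $\Omega \mapsto$ (solution map of \eqref{modelpb1}) is, by the well-posedness lemma quoted from \cite{DMR}, an isomorphism onto $C^0((0,1);H^1)$, but the \emph{mixed-type} nature means the forward-backward evolution only closes up modulo a finite-dimensional obstruction: there is a bounded right-inverse only after imposing a finite number of scalar orthogonality conditions coming from the ``resonant'' or kernel directions of the adjoint problem. Concretely, I would introduce the adjoint Airy problem $-\rho\,\partial_t\varphi - \partial_\rho^2\varphi = 0$ with the reversed boundary conditions ($\varphi|_{t=1}$ on $\rho>0$, $\varphi|_{t=0}$ on $\rho<0$), show its solution space relevant to the pairing is two-dimensional (this is the ``codimension two'' of \cite{DMR}), pick a basis $\varphi^{(0)},\varphi^{(1)}$, and pair the $W$-equation against them. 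Integration by parts produces identities of the form $(\text{boundary terms in } W|_{t=0}, W|_{t=1}) = (\text{pairing of } \partial_t G \text{ against } \varphi^{(i)})$; substituting $W|_{t=0} = \rho^{-1}(\Xi_L''+G_{Left})$ etc. turns the left side into explicit linear functionals of $(\Xi_L,\Xi_R)$, which I name $\ell_0,\ell_1$, and the right side into linear functionals $\beta_0(G),\beta_1(G)$. That yields \eqref{con:c}--\eqref{con:d}. One then checks $\ell_0,\ell_1$ are nontrivial and independent: nontriviality because $\varphi^{(i)}\not\equiv 0$ produces a genuinely nonzero weight against which $\rho^{-1}\Xi_L''$ pairs nontrivially, and independence because $\varphi^{(0)},\varphi^{(1)}$ were chosen linearly independent and the pairing is nondegenerate on the relevant quotient.

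The third step is the converse: assuming \eqref{con:a}--\eqref{con:d}, produce $W \in C^0((0,1);H^1) \cap \ldots$ solving the $W$-problem, and then identify $W = \partial_t\Omega$ by a density/uniqueness argument (e.g.\ show $\int_0^t W = \Omega - \Omega|_{t=0}$ weakly, using that both sides solve the same problem with the same data). Conditions \eqref{con:a}--\eqref{con:b} guarantee the induced data for $W$ is in the class where the existence lemma applies; conditions \eqref{con:c}--\eqref{con:d} are precisely the two orthogonality relations making the mixed-type problem for $W$ solvable. Then $\Omega \in H^1((0,1);L^2)$ from $\partial_t\Omega \in L^2$, and $\Omega \in L^2((0,1);H^1)$ already; to upgrade to $\Omega \in H^1((0,1);H^1)$ one uses the equation once more, $\partial_\rho^2\Omega = \rho\,\partial_t\Omega - G$, to trade the $t$-regularity just gained into $\rho$-regularity of $\partial_t\Omega$ (bootstrapping through the equation for $W$, whose right side $\partial_t G \in L^2$ by hypothesis $G\in H^1((0,1);L^2)$).

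\textbf{Main obstacle.} The crux is the analysis of the finite-dimensional obstruction — showing the mixed-type problem for $W$ is solvable exactly when two explicit scalar conditions hold, and that those conditions, after using the equation to eliminate $W|_{t=0,1}$ in favor of $(\Xi_L'',G_{Left})$ and $(\Xi_R'',G_{Right})$, become \emph{nontrivial and independent} functionals of the data. This requires a careful integration-by-parts bookkeeping near the degenerate interface $\{\rho=0\}$ (where the weight $\rho$ vanishes and the $\rho^{-1}$ in $W$'s data lives) and a genuine understanding of the kernel/cokernel of the degenerate parabolic operator, i.e.\ the content underlying the Dalibard--Marbach--Rax codimension-two statement; everything else is bootstrapping via the equation and the quoted well-posedness lemma.
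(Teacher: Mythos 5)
Your proposal follows essentially the same route as the paper: use the equation to express $\partial_t\Omega|_{t=0,1}$ as $\rho^{-1}(\Xi''+G)$ so that \eqref{con:a}--\eqref{con:b} are exactly the local matching conditions killing the $\rho^{-1}$ singularity; then pair the $\partial_t$-differentiated problem against the two dual profiles of \cite{DMR} and integrate by parts to extract two scalar orthogonality conditions, substituting $\Xi^{(1)}_{L,R}=\rho^{-1}(\Xi''_{L,R}+G_{L,R})$ to turn them into the intrinsic constraints \eqref{con:c}--\eqref{con:d}. The one detail you gloss over is that the dual objects $\Phi^0,\Phi^1$ are not literally a two-dimensional kernel of the homogeneous adjoint on the whole line — they are constructed with prescribed jump discontinuities (in $\Phi$ or $\partial_\rho\Phi$) across $\rho=0$, and it is precisely these jumps that produce the boundary contributions $\Xi_L(0)-\Xi_R(0)$ and $\Xi_L'(0)-\Xi_R'(0)$ in the constraint identities and make the nontriviality/independence check (via $\ell_j[\widehat\Xi]=\int\partial_\rho^2\Phi^j\,\widehat\Xi$ after integration by parts) work cleanly; everything else in your outline, including deferring the hard solvability step to \cite{DMR}, matches the paper.
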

First, we define 
\begin{align}
\Xi^{(k)}_{L} := \p_t^k \Omega|_{x = 0}. 
\end{align}
We clearly see that 
\begin{align} \label{para:con:1}
\Xi^{(1)}_{L} = \frac{\p_\rho^2 \Xi_{L} + G_L}{\rho}, \qquad \Xi^{(1)}_{R} = \frac{\p_\rho^2 \Xi_{R} + G_R}{\rho}
\end{align}
and so on for higher $k$. We define $G_L, G_R$ in the natural way: evaluation at $t = 0, \rho > 0$ and $t = 1, \rho < 0$. Note that due to these expressions are well-defined precisely due to the first two constraints \eqref{con:a} -- \eqref{con:b}.

%We also want to unify the notation for the constraints \eqref{con:a}, \eqref{con:b} with \eqref{con:c}, \eqref{con:d}. Therefore, we define 
%\begin{align}
%\ell_2[\Xi_L, \Xi_R] = \Xi_L''(0), \qquad \bar{\ell}_2[\Xi_L, \Xi_R] = \Xi_R''(0),
%\end{align}
%and $\beta_2(G), \bar{\beta}_2(G)$ according to the right-hand sides of \eqref{con:a} -- \eqref{con:b}. Under this notation, we can rewrite the constraints as follows: 
%\begin{subequations} \label{const:const:e}
%\begin{align} \label{con:a:e}
%\ell_2(\Xi_L, \Xi_R) = &\beta_2(G), \\ \label{con:b:e}
%\bar{\ell}_2(\Xi_L, \Xi_R) = & \bar{\beta}_2(G), \\  \label{con:c:e}
%\ell_1(\Xi_{L}, \Xi_{R}) = & \beta_1(G) \\ \label{con:d:e}
%\ell_0(\Xi_{L}, \Xi_{R}) = & \beta_0(G),
%\end{align}
%\end{subequations}

We want to think of the two local constraints \eqref{con:a} -- \eqref{con:b} separately from the two integral constraints \eqref{con:c} -- \eqref{con:d}. To achieve this, we will decompose our data into 
\begin{align} \label{QLQR}
\Xi_L = \widehat{\Xi}_L + Q_L, \qquad \Xi_R = \widehat{\Xi}_R + Q_R,
\end{align}
where $(\widehat{\Xi}_L, \widehat{\Xi}_R)$ are compactly supported \textit{away} from $\rho = 0$, and 
\begin{align}
Q_L''(0) = &G_{Left}(0), \qquad Q_R''(0) = G_{Right}(0).
\end{align}

We define now the dual profiles following \cite{DMR}. First, we have 
\begin{align}
&- \rho \p_t \Phi^0 - \p_\rho^2 \Phi^0 = 0 \\
&\Phi^0 \text{ continuous across } \rho = 0 \\
&\p_\rho \Phi^0(t, 0+) - \p_\rho \Phi^0(t,0-) = -1 \\
&\Phi^0|_{t = 1} = 0, \qquad \rho > 0 \\
&\Phi^0|_{t = 0} = 0, \qquad \rho < 0, \\
&\Phi^0 \rightarrow 0 \text{ as } |\rho| \rightarrow \infty
\end{align}
From here, we define the notations 
\begin{align}
\Phi^0_L(\rho) := \Phi^0(0, \rho), \qquad \rho > 0, \qquad \Phi^0_R(\rho) := \Phi^0(1, \rho), \qquad \rho < 0. 
\end{align}
Notice that $\Phi^0_{L,R}$ are not prescribed functions. However, we can deduce the following from the jump constraints: 
\begin{align}
\Phi^0_L(0)  &= 0\\
\p_z \Phi^0_L(0) &= -1\\
\Phi^0_R(0) &= 0\\
\p_z \Phi^0_R(0) &= 1 
\end{align}

Similarly, we define $\Phi^1$ via 
\begin{align}
&- \rho \p_t \Phi^1 - \p_\rho^2 \Phi^1 = 0 \\
&\p_\rho \Phi^1 \text{ continuous across } \rho = 0 \\
&\Phi^1(t, 0+) -  \Phi^1(t,0-) = 1 \\
&\Phi^1|_{t = 1} = 0, \qquad \rho > 0 \\
&\Phi^1|_{t = 0} = 0, \qquad \rho < 0, \\
&\Phi^1 \rightarrow 0 \text{ as } |\rho| \rightarrow \infty
\end{align}
From here, we define the notations 
\begin{align}
\Phi^1_L(\rho) := \Phi^1(0, \rho), \qquad \rho > 0, \qquad \Phi^1_R(\rho) := \Phi^1(1, \rho), \qquad \rho < 0. 
\end{align}
Again, $\Phi^1_{L,R}$ are not prescribed functions. However, we can deduce the following from the jump constraints: 
\begin{align}
\Phi^1_L(0)  &= 1\\
\p_z \Phi^1_L(0) &= 0\\
\Phi^1_R(0) &= -1\\
\p_z \Phi^1_R(0) &= 0  
\end{align}

We now examine the constraint equations which appear in Proposition 2.7 of \cite{DMR}:
\begin{align} \label{con0}
\int_0^\infty \rho \Phi^0_L \Xi^{(1)}_L - \int_{-\infty}^0 \rho \Phi^0_R \Xi^{(1)}_R + \Xi_L(0) - \Xi_R(0) = - \int_{\Omega} \p_t G \Phi^0 \\  \label{con1}
\int_0^\infty \rho \Phi^1_L \Xi^{(1)}_L - \int_{-\infty}^0 \rho \Phi^1_R \Xi^{(1)}_R + \Xi_L'(0) - \Xi_R'(0) = - \int_{\Omega} \p_t G \Phi^1
\end{align}
We insert expression \eqref{para:con:1} to write this as an intrinsic constraint containing only $(\Xi_L, \Xi_R)$ and the given source term $G$. Doing so, we collect the identities 
\begin{align}
\int_0^\infty \rho \Phi^0_L \Xi^{(1)}_L = &\int_0^\infty \rho \Phi_L^0 \frac{\p_\rho^2 \Xi_{L} + G_L}{\rho} = \int_0^\infty \Phi_L^0 \p_\rho^2 \widehat{\Xi}_{L} + \int_0^\infty \Phi_L^0 (\p_{\rho}^2 Q_L + G_L) \\
\int_{-\infty}^0 \rho \Phi^0_R \Xi^{(1)}_R = & \int_{-\infty}^0 \rho \Phi^0_R \frac{\p_\rho^2 \Xi_R + G_R}{\rho} = \int_{-\infty}^0 \Phi^0_R \p_\rho^2 \widehat{\Xi}_R + \int_{-\infty}^0 \Phi^0_R (\p_{\rho}^2 Q_R + G_R).
\end{align}
We can therefore rewrite \eqref{con0} -- \eqref{con1} by moving all the $(\widehat{\Xi}_L, \widehat{\Xi}_R)$ terms to the left and the $G$ dependent terms to the right as follows
\begin{subequations}
\begin{align}\n
&\int_0^\infty \Phi^0_L \widehat{ \Xi}_L'' - \int_{-\infty}^0 \Phi^0_R \widehat{\Xi}_R'' \\  \label{con:com:a}
 =& - \int \p_t G \Phi^0 - \int_0^\infty \Phi^0_L (G_L + Q_L'' ) + \int_{-\infty}^0 \Phi^0_R (G_R + Q_R'') - (Q_L(0) -Q_R(0))\\ \n
&\int_0^\infty \Phi^1_L \widehat{\Xi}_L'' - \int_{-\infty}^0 \Phi^1_R \widehat{\Xi}_R''  \\ \label{con:com:b} 
=& - \int \p_t G \Phi^1 - \int_0^\infty \Phi^1_L  (G_L + Q_L'') + \int_{-\infty}^0 \Phi^1_R (G_R + Q_R'') - (Q'_L(0) -Q'_R(0))
\end{align}
\end{subequations}
We rewrite this in short-form as 
\begin{align} \label{lf1}
\ell_0[\widehat{\Xi}_L,\widehat{ \Xi}_R] = \beta_0(G_L, G_R) + \alpha_0(G), \\ \label{lf2}
\ell_1[\widehat{\Xi}_L, \widehat{\Xi}_R] = \beta_1(G_L, G_R) + \alpha_1(G). 
\end{align}
where $\ell_0, \ell_1, \beta_0, \beta_1, \alpha_0, \alpha_1$ are linear functionals taking values in $\mathbb{R}$, and are defined through \eqref{con:com:a} -- \eqref{con:com:b}: 
\begin{align}
\beta_0(G_L, G_R) := & - \int_0^\infty \Phi^0_L (G_L +Q_L'') + \int_{-\infty}^0 \Phi^0_R (G_R + Q_R'')- (Q_L(0) -Q_R(0)) \\
\beta_1(G_L, G_R) := & - \int_0^\infty \Phi^1_L (G_L + Q_L'') + \int_{-\infty}^0 \Phi^0_R G_R + Q_R'')- (Q_L'(0) -Q_R'(0))\\
\alpha_0(G) := &- \int \p_t G \Phi^0, \\
\alpha_1(G) := &- \int \p_t G \Phi^1. 
\end{align}
%Combining now with \eqref{con:a:e} -- \eqref{con:d:e}, we obtain the constraints 
%\begin{align} \label{dg:a}
%\bar{\ell}_2[\Xi_L, \Xi_R] = & \bar{ \beta}_2(G_L, G_R) = \bar{\tau}_2(G)\\  \label{dg:b}
%\ell_2[\Xi_L, \Xi_R] = & \beta_2(G_L, G_R) = \tau_2(G) \\  \label{dg:c}
%\ell_1[\Xi_L, \Xi_R] = & \beta_1(G_L, G_R) + \alpha_1(G) = \tau_1(G),  \\  \label{dg:d}
%\ell_0[\Xi_L, \Xi_R] = & \beta_0(G_L, G_R) + \alpha_0(G) = \tau_0(G).
%\end{align}

Our goal at this stage is to prove that $\ell_0, \ell_1$ are not identically zero (which implies that the Range of $\ell_0, \ell_1$ is all of $\mathbb{R}$). Assuming sufficient regularity of the dual profiles (which is justified on the support of $\widehat{\Xi}_L, \widehat{\Xi}_R$), we can integrate by parts via to rewrite $\ell_0, \ell_1$ as  
%\begin{align} \n
%\int_0^\infty \Phi^0_L \Xi_L'' = & \int_0^\infty \p_\rho^2 \Phi^0_L \Xi_L + \p_z \Phi^0_L(0) \Xi_L(0) - \Phi^0_L(0) \Xi_L'(0) \\
%= &\int_0^\infty \p_\rho^2 \Phi^0_L \Xi_L - \Xi_L(0)   \\ \n
%-\int_{-\infty}^0 \Phi^0_R \Xi_R'' = & - \int_{-\infty}^0 \p_\rho^2 \Phi^0_R \Xi_R + \p_\rho \Phi^0_R(0) \Xi_R(0) - \Phi^0_R(0) \Xi_R'(0) \\
%= &  - \int_{-\infty}^0 \p_\rho^2 \Phi^0_R \Xi_R +  \Xi_R(0) \\ \n
%\int_0^\infty \Phi^1_L \Xi_L'' = & \int_0^\infty \p_\rho^2 \Phi^1_L \Xi_L + \p_\rho \Phi^1_L(0) \Xi_L(0) - \Phi^1_L(0) \Xi_L'(0) \\
%= &\int_0^\infty \p_\rho^2 \Phi^0_L \Xi_L - \Xi_L'(0) \\ \n
%-\int_{-\infty}^0 \Phi^1_R \Xi_R'' = & - \int_{-\infty}^0 \p_\rho^2 \Phi^1_R \Xi_R + \p_\rho \Phi^1_R(0) \Xi_R(0) - \Phi^1_R(0) \Xi_R'(0) \\
%= &  - \int_{-\infty}^0 \p_\rho^2 \Phi^1_R \Xi_R +  \Xi_R'(0)
%\end{align}
\begin{align} \label{form:ell:0}
\ell_0[\widehat{\Xi}_L, \widehat{\Xi}_R] = &\int_0^\infty \p_\rho^2 \Phi^0_L \widehat{\Xi}_L -  \int_{-\infty}^0 \p_\rho^2 \Phi^0_R \widehat{\Xi}_R, \\  \label{form:ell:1}
\ell_1[\widehat{\Xi}_L, \widehat{\Xi}_R] =& \int_0^\infty \p_\rho^2 \Phi^1_L \widehat{ \Xi}_L -  \int_{-\infty}^0 \p_\rho^2 \Phi^1_R \widehat{\Xi}_R.
\end{align}
Due to the boundary conditions at $0$ and, importantly, the vanishing at $|\rho| = \infty$, we know that $\p_\rho^2 \Phi^0, \p_\rho^2 \Phi^1$ are nontrivial, $L^2$ functions. Therefore, $\ell_0, \ell_1$ are nontrivial. 

Fix pairs $\bold{e}_0 = (e_{0,L}, e_{0,R})$, $\bold{e}_1 = (e_{1,L}, e_{1,R})$ that are smooth, compactly supported away from $\rho = 0$, and such that 
\begin{align}
&\ell_0[\bold{e}_0] = 1, \qquad \ell_0[\bold{e}_1] = 0, \\
&\ell_1[\bold{e}_0] = 0, \qquad \ell_1[\bold{e}_1] = 1.
\end{align}
We can make such a choice due to the pairwise independence of the functionals. We have therefore established 
\begin{proposition} \label{prop:k:eq:1} Let $G$ be a given source term in \eqref{modelpb1}. Fix $(Q_L, Q_R)$ as in \eqref{QLQR}. Fix any $(\bar{\Xi}_L, \bar{\Xi}_R) \in \bar{\mathcal{P}}_1$ There exist two numbers $c_0, c_1$ (which depend on $G$ and $(\bar{\Xi}_L, \bar{\Xi}_R)$) such that data of the type 
\begin{align} \label{data:set}
(\Xi_L, \Xi_R) = & (\widehat{ \Xi}_L, \widehat{\Xi}_R) + (Q_L, Q_R) \\ 
 (\widehat{ \Xi}_L, \widehat{\Xi}_R) = & (\bar{\Xi}_L, \bar{\Xi}_R)  + c_0 \bold{e}_0 + c_1 \bold{e}_1
\end{align}
satisfy the constraint equations \eqref{const:const}. Therefore, by Proposition 2.7 in \cite{DMR}, the unique strong solution resulting from \eqref{data:set} enjoys the higher tangential regularity $H^1(0, 1; H^1(\mathbb{R}))$. 
\end{proposition}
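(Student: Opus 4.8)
The plan is to assemble, with no new analysis, the ingredients already set up above: the splitting of the four constraints \eqref{const:const} into the two local ones \eqref{con:a}--\eqref{con:b} and the two integral ones \eqref{con:c}--\eqref{con:d}, the reduction of the latter pair to the intrinsic identities \eqref{lf1}--\eqref{lf2}, and the normalization of the correctors $\mathbf{e}_0, \mathbf{e}_1$. First I would observe that \eqref{con:a}--\eqref{con:b} are automatic for data of the form \eqref{data:set}: since $(\bar{\Xi}_L, \bar{\Xi}_R)$, $\mathbf{e}_0$ and $\mathbf{e}_1$ are all supported away from $\rho = 0$, the function $\widehat{\Xi}_L = \bar{\Xi}_L + c_0 e_{0,L} + c_1 e_{1,L}$ (and likewise $\widehat{\Xi}_R$) vanishes near $\rho = 0$ for every choice of $c_0, c_1$, whence $\widehat{\Xi}_L''(0) = \widehat{\Xi}_R''(0) = 0$; combined with $Q_L''(0) = G_{Left}(0)$, $Q_R''(0) = G_{Right}(0)$ from \eqref{QLQR} this yields $\Xi_L''(0) = G_{Left}(0)$ and $\Xi_R''(0) = G_{Right}(0)$. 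In particular the quotients $\Xi^{(1)}_L, \Xi^{(1)}_R$ in \eqref{para:con:1} are well defined, so the remaining constraints \eqref{con:c}--\eqref{con:d} are equivalent to the intrinsic identities \eqref{lf1}--\eqref{lf2}.

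It then remains only to choose $c_0, c_1$ so that \eqref{lf1}--\eqref{lf2} hold, and here I would use pure linearity. By the defining properties $\ell_0[\mathbf{e}_0] = \ell_1[\mathbf{e}_1] = 1$, $\ell_0[\mathbf{e}_1] = \ell_1[\mathbf{e}_0] = 0$ and by $(\bar{\Xi}_L, \bar{\Xi}_R) \in \bar{\mathcal{P}}_1$, i.e. $\ell_0[\bar{\Xi}_L, \bar{\Xi}_R] = \ell_1[\bar{\Xi}_L, \bar{\Xi}_R] = 0$ as in \eqref{Pbarkast}, one has
\begin{align}
\ell_0[\widehat{\Xi}_L, \widehat{\Xi}_R] = c_0, \qquad \ell_1[\widehat{\Xi}_L, \widehat{\Xi}_R] = c_1 .
\end{align}
Meanwhile the right-hand sides of \eqref{lf1}--\eqref{lf2}, namely $\beta_0(G_L, G_R) + \alpha_0(G)$ and $\beta_1(G_L, G_R) + \alpha_1(G)$, are \emph{fixed} real numbers depending only on the given source $G$, the fixed corrector $(Q_L, Q_R)$, and the fixed dual profiles $\Phi^0, \Phi^1$ --- and not on $c_0, c_1$. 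Hence the choice
\begin{align}
c_0 := \beta_0(G_L, G_R) + \alpha_0(G), \qquad c_1 := \beta_1(G_L, G_R) + \alpha_1(G)
\end{align}
makes \eqref{lf1}--\eqref{lf2}, and therefore all of \eqref{const:const}, hold for the data \eqref{data:set}. Proposition \ref{Propcon} (equivalently Proposition 2.7 of \cite{DMR}) then upgrades the unique strong solution to $\Omega \in H^1((0,1); H^1(\mathbb{R}))$.

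I do not expect a genuine obstacle here: the proposition is essentially a packaging of the earlier computation. The one point worth stating carefully is the assertion that the right-hand sides of \eqref{lf1}--\eqref{lf2} are independent of $c_0, c_1$ --- this is what decouples the two scalar unknowns from the rest of the data, and it holds precisely because in the model problem \eqref{modelpb1} the source $G$ is \emph{given}. It is exactly this independence that fails once $G$ is allowed to depend on the interior solution in the Prandtl application, which is why the loop-closing estimates \eqref{abd}--\eqref{bbd} become necessary there. A minor technical aside: the integration by parts used to pass from \eqref{con:com:a}--\eqref{con:com:b} to \eqref{form:ell:0}--\eqref{form:ell:1}, which is how one checks that $\ell_0, \ell_1 \not\equiv 0$ (and hence that the normalization defining $\mathbf{e}_0, \mathbf{e}_1$ is possible), only requires smoothness of $\Phi^0, \Phi^1$ on the supports of $\widehat{\Xi}_L, \widehat{\Xi}_R$, a region bounded away from $\{\rho = 0\}$ where the dual profiles are smooth --- so it causes no difficulty.
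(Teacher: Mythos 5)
Your proof is correct and follows exactly the route the paper intends: the paper itself does not write a separate proof for this proposition but rather states it as the summary of the preceding derivation (the splitting into $(Q_L,Q_R)$ and $(\widehat\Xi_L,\widehat\Xi_R)$, the reformulation into \eqref{lf1}--\eqref{lf2}, and the normalization of $\mathbf{e}_0, \mathbf{e}_1$). Your explicit formulas $c_j = \beta_j(G_L,G_R) + \alpha_j(G)$, the observation that \eqref{con:a}--\eqref{con:b} hold automatically because $\widehat\Xi_L, \widehat\Xi_R$ vanish near $\rho = 0$ while $(Q_L,Q_R)$ carry the required second-derivative values, and the remark that the right-hand sides of \eqref{lf1}--\eqref{lf2} are decoupled from $c_0, c_1$ precisely because $G$ is externally given, all faithfully reproduce the content the paper packages into this proposition.
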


\subsection{Linear Independence of $\p_t^k \Phi^{(j)}$, $k \ge 1$}

We need the following rigidity result, which is in the spirit of control theory: 
\begin{lemma} \label{lem:rig:1} Let $\eps > 0$. Assume $f$ is a solution to 
\begin{subequations} \label{eq:f:f}
\begin{align}
&(\rho \p_t +  \p_{\rho}^2) f = 0, \qquad (t, \rho) \in (0, 1) \times (\eps, \infty), \\
&f|_{t = 0} = 0, \qquad \rho > \eps, \\
&f|_{t = 1} = 0, \qquad \rho > \eps, \\
&f|_{\rho \rightarrow \infty} = 0. 
\end{align}
\end{subequations}
Then for all $f = 0$ for $(t, \rho) \in (0, 1) \times (\eps, \infty)$.  
\end{lemma}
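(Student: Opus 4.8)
The statement to prove is a backward uniqueness / unique continuation result: a solution of the degenerate-parabolic equation $(\rho\p_t + \p_\rho^2)f = 0$ on $(0,1)\times(\eps,\infty)$ which vanishes at $t=0$, at $t=1$, and as $\rho\to\infty$ must vanish identically. Note that on $\{\rho > \eps\}$ the coefficient $\rho$ is bounded below by $\eps > 0$, so after dividing by $\rho$ this is a genuine (non-degenerate, though $\rho$-weighted) parabolic equation. The twist is the direction: the operator is $\rho\p_t + \p_\rho^2$ rather than $\rho\p_t - \p_\rho^2$, so $-t$ is the time-like direction; reading it as $\rho\p_s g - \p_\rho^2 g = 0$ with $s = 1-t$, we have a forward-parabolic equation with \emph{zero} initial data at $s=0$ (coming from $f|_{t=1}=0$) — and therefore it should simply be zero by forward uniqueness for the heat-type equation.

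The plan is as follows. First I would substitute $s = 1-t$ and set $g(s,\rho) = f(1-s,\rho)$, so that $g$ solves $\rho\p_s g - \p_\rho^2 g = 0$ on $(0,1)\times(\eps,\infty)$ with $g|_{s=0} = 0$, $g|_{s=1}=0$, $g|_{\rho\to\infty}=0$. Then I would run a standard energy estimate: multiply the equation by $g$ and integrate over $(\eps,\infty)$ in $\rho$. The term $\int \rho\, g\,\p_s g\,\ud\rho = \tfrac12 \tfrac{d}{ds}\int \rho\, g^2 \,\ud\rho$ (legitimate since $\rho$ is $s$-independent), while $-\int g\,\p_\rho^2 g\,\ud\rho = \int (\p_\rho g)^2\,\ud\rho - [g\,\p_\rho g]_{\rho=\eps}^{\rho=\infty}$. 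The decay at $\rho=\infty$ kills the upper boundary term; the lower boundary term at $\rho = \eps$ is the one genuinely new feature here, since we do \emph{not} impose a boundary condition at $\rho=\eps$ — this is a unique continuation statement on a domain with free lateral boundary, not a boundary value problem. So the naive energy identity gives $\tfrac12\tfrac{d}{ds}\int_\eps^\infty \rho\,g^2 + \int_\eps^\infty (\p_\rho g)^2 = -g(s,\eps)\,\p_\rho g(s,\eps)$, and we cannot immediately conclude.

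To handle the lateral boundary term I would use a Carleman-type weight or a domain-enlargement argument: the cleanest route is probably to observe that $f$ (equivalently $g$) extends to a solution on a larger $\rho$-interval, or to apply a Carleman inequality for the operator $\rho\p_s - \p_\rho^2$ adapted to the half-line $\{\rho > \eps\}$ with weight concentrating near $s=0$, exploiting that $g$ and all its $\rho$-derivatives vanish at $s=0$ (by parabolic smoothing and the zero initial data) and at $s=1$. Alternatively — and I suspect this is what the authors do, given the ``spirit of control theory'' remark — one can argue by analyticity in $\rho$: solutions of $\rho\p_s g = \p_\rho^2 g$ with $\rho \geq \eps$ are, for each fixed $s\in(0,1)$, real-analytic in $\rho$ (the equation is elliptic in $\rho$ with analytic coefficient $1/\rho$ on the right-hand side once one knows enough $s$-regularity), and one propagates the vanishing outward from $\rho=\infty$; combined with a Holmgren- or Carleman-based unique continuation from the region near $\rho = \infty$ where $g\equiv 0$, one gets $g\equiv 0$ on all of $(0,1)\times(\eps,\infty)$.

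\textbf{Main obstacle.} The essential difficulty is the uncontrolled lateral boundary term $g(s,\eps)\p_\rho g(s,\eps)$: a plain $L^2$ energy estimate does not close because there is no boundary condition at $\rho=\eps$, so this is genuinely a unique continuation problem rather than an initial-boundary value problem. Resolving it requires either a Carleman estimate that absorbs the boundary contribution (using the vanishing at $s=0$ and $s=1$ to justify the weight and the integration by parts in $s$), or an analyticity/Holmgren argument propagating the trivial solution inward from $\rho=\infty$. Everything else — the change of variables, the smoothing needed to justify the trace manipulations, the decay at infinity — is routine. I would expect the write-up to invoke a Carleman inequality for $\rho\p_t + \p_\rho^2$, likely with a weight of the form $e^{\lambda\phi(t)/\text{(power of }t)}$ or similar, exploiting that all $\rho$-derivatives of $f$ vanish at both $t=0$ and $t=1$.
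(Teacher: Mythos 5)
Your diagnosis of \emph{why} this lemma is nontrivial is exactly right: there is no lateral boundary condition at $\rho=\eps$, so this is a unique-continuation problem and not an initial–boundary value problem, and the naive energy identity leaves the uncontrolled term $g(s,\eps)\,\p_\rho g(s,\eps)$ on the boundary. However, the route you then propose is not the one the paper takes, and as sketched it has a gap that is not merely cosmetic: you suggest propagating the trivial solution inward from a region near $\rho=\infty$ via Holmgren/analyticity, but the hypothesis only gives \emph{decay} $f|_{\rho\to\infty}=0$, not identical vanishing on any open set. There is therefore no ``seed'' region from which to propagate, and the Carleman-weight route is left entirely schematic (which weight, how the boundary term at $\rho=\eps$ is absorbed, how the vanishing at $t=0,1$ enters). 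So the proposal correctly poses the problem but does not contain a proof.

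The paper's actual argument is much more elementary than either Carleman or Holmgren, and it is worth seeing why it is so clean. Instead of trying to absorb the bad boundary term, one chooses a multiplier that makes it vanish. Multiplying the equation by $(\rho-\eps)\,t^n$ and integrating over $(0,1)\times(\eps,\infty)$: the factor $(\rho-\eps)$ kills the lowest-order boundary contribution at $\rho=\eps$ after one integration by parts in $\rho$, the second integration by parts in $\rho$ produces the moment $\int_0^1 t^n g(t)\,dt$ (with $g(t)=f(t,\eps)$) as the only surviving boundary term, and the factor $t^n$ together with $f|_{t=0}=f|_{t=1}=0$ kills the $t$-boundary terms. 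This gives a recursion tying $\int_0^1 t^n g(t)\,dt$ to the lower moments $\alpha^{(m)}(\rho):=\int_0^1 t^m f(t,\rho)\,dt$, $m<n$. In parallel, integrating the equation (multiplied by $t^n$) directly in $t$ shows that each $\alpha^{(n)}(\rho)$ solves $\p_\rho^2\alpha^{(n)}=\text{(lower moments)}=0$ on $(\eps,\infty)$, i.e.\ is affine in $\rho$, and the two conditions $\alpha^{(n)}(\eps)=0$ (from the previous moment identity) and $\alpha^{(n)}(\infty)=0$ (from the decay hypothesis — this is exactly where your decay assumption is used, and why you do not need vanishing on an open set) force $\alpha^{(n)}\equiv 0$. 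Bootstrapping gives $\int_0^1 t^n g(t)\,dt=0$ and $\int_0^1 t^n f(t,\rho)\,dt=0$ for every $n\ge 0$ and every $\rho\ge\eps$, and Stone–Weierstrass concludes $g\equiv 0$ and $f\equiv 0$. The moral, compared to your sketch: the decay at $\rho=\infty$ is not a weak substitute for vanishing on an open set that one should try to strengthen via analyticity — it is exactly the right input for the ODE $\p_\rho^2\alpha^{(n)}=0$, and the polynomial weight $(\rho-\eps)t^n$ does the work that you were hoping a Carleman weight would do, without any of the machinery.
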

\begin{proof} Define the domain $\Omega_\eps := (t, \rho) \in (0, 1) \times (\eps, \infty)$. Let the boundary condition 
\begin{align}
g(t) := f|_{\rho = \eps}.
\end{align}

\noindent \textit{Step 1: } We multiply by $(\rho - \eps)$. This gives 
\begin{align*}
0 = \int \int_{\Omega_\eps}  \rho \p_t f ( \rho - \eps) \ud t \ud \rho + \int \int_{\Omega_\eps} \p_\rho^2 f ( \rho - \eps) \ud \rho \ud t =  \int_{\rho = \eps} f \ud t =  \int_0^1 g(t) \ud t
\end{align*}

\vspace{1 mm}

\noindent \textit{Step 2: } We now define $\alpha(\rho) := \int_0^1 f(t, \rho) \ud t$. We integrate in $t$ the equation \eqref{eq:f:f} for each fixed $\rho$, which produces 
\begin{subequations}
\begin{align}
\p_{\rho}^2 \alpha(\rho) = & 0 \qquad \rho \in (\eps, \infty), \\
 \alpha(\eps) = & 0 \\
 \alpha(\infty) = & 0. 
\end{align}
\end{subequations}
Therefore, we conclude $\alpha(\rho) = 0$. 

\vspace{1 mm}

\noindent \textit{Step 3:} We want to compute higher moments. We multiply by $(\rho - \eps) t$. This gives 
\begin{align*}
0 = &\int \int_{\Omega_\eps}  \rho \p_t f ( \rho - \eps)t \ud t \ud \rho + \int \int_{\Omega_\eps} \p_\rho^2 f ( \rho - \eps)t \ud \rho \ud t \\
 = & - \int \int_{\Omega_\eps} \rho ( \rho - \eps) f \ud t \ud \rho + \int_{\rho = \eps} f t \ud t \\ 
 = & - \int \rho (\rho - \eps) \alpha(\rho) \ud \rho + \int_0^1 t g(t) \ud t \\
 = & \int_0^1 t g(t) \ud t
\end{align*}
where we have used the vanishing of $\alpha$. 

\vspace{1 mm}

\noindent \textit{Step 4:} We define $\alpha^{(1)}(\rho) = \int_0^1 t f(t, \rho) \ud t$. Computing the equation on $t f$, we obtain 
\begin{align}
\rho \p_t (t f) + \p_{\rho}^2 (t f) - \rho f = 0. 
\end{align}
Integrating in $t$, we obtain 
\begin{subequations}
\begin{align}
0 = & \p_{\rho}^2 \alpha^{(1)} - \alpha =  \p_{\rho}^2 \alpha^{(1)}, \qquad \rho \in (\eps, \infty), \\
 \alpha^{(1)}(\eps) = & 0 \\
 \alpha^{(1)}(\infty) = & 0. 
\end{align}
\end{subequations}
Therefore, we have $\alpha^{(1)} = 0$. 

\noindent \vspace{1 mm} \textit{Step 5:} Bootstrapping up to higher moments, we obtain 
\begin{align}
\int_0^1 t^n g(t) = 0, \qquad n \in \mathbb{N} \cup \{0 \}. 
\end{align}
By the Stone-Weierstrauss Theorem, this implies that $g = 0$. 
\end{proof}

We now want to apply this result to arbitrary linear combinations of $\p_t^k \Phi$ to establish that these must be linearly independent on $\{0 \} \times (\eps, \infty)$ and $\{1 \} \times (- \infty, - \eps)$. 

\begin{lemma} Fix $j = 0$ or  $1$, and fix any $\eps > 0$. The collection
\begin{align}
\mathcal{C}^{(j)}_{k_\ast} := \{ \p_t^k \Phi^{(j)} \}_{k = 1}^{k_\ast}
\end{align}
is linearly independent and nontrivial on $\{0 \} \times (\eps, \infty)$ as well as on $\{1\} \times (-\infty, \eps)$.
\end{lemma}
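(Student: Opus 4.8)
The plan is to leverage the rigidity Lemma~\ref{lem:rig:1}. Abbreviate $\Phi := \Phi^{(j)}$ and suppose that $\Psi := \sum_{k=1}^{k_\ast} a_k \p_t^k \Phi$ has $\Psi(0,\rho) = 0$ for every $\rho > \eps$; the goal is to deduce $a_1 = \cdots = a_{k_\ast} = 0$, which is exactly linear independence on $\{0\}\times(\eps,\infty)$ (and which, for $k_\ast = 1$, also yields the ``nontriviality'' assertion). The statement on $\{1\}\times(-\infty,-\eps)$ follows from the exact symmetry $(t,\rho)\mapsto(1-t,-\rho)$: it preserves the operator $-\rho\p_t - \p_\rho^2$, maps the dual problem for $\Phi^{(j)}$ to itself (up to a harmless sign when $j=1$), and interchanges the two boundary portions, so the argument below transfers verbatim.

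The first step is to record two structural facts. Since the coefficient $\rho$ is independent of $t$, differentiating $\rho\p_t\Phi + \p_\rho^2\Phi = 0$ repeatedly in $t$ gives $\rho\p_t(\p_t^k\Phi) + \p_\rho^2(\p_t^k\Phi) = 0$; hence every $\p_t^k\Phi$ again solves the homogeneous equation on $(0,1)\times(\eps,\infty)$ and (by the standing decay of the dual profiles, of the same nature as that invoked for \eqref{form:ell:0}--\eqref{form:ell:1}) tends to $0$ as $\rho\to\infty$. Moreover, the full $t$-jet of $\Phi$ vanishes on $\{t=1\}\times(0,\infty)$: indeed $\Phi(1,\rho) = 0$ for $\rho > 0$ by construction, and evaluating the $m$-fold $t$-differentiated equation at $t=1$ yields $\rho\,\p_t^{m+1}\Phi(1,\rho) = -\p_\rho^2\bigl[\p_t^m\Phi(1,\cdot)\bigr](\rho)$, which vanishes by the inductive hypothesis and may be divided by $\rho > 0$. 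In particular $\p_t^k\Phi(1,\rho) = 0$ for $\rho > \eps$ and all $k \ge 1$, so $\Psi(1,\rho) = 0$ for $\rho > \eps$.

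Combining these facts, $\Psi$ solves the homogeneous equation on $(0,1)\times(\eps,\infty)$, vanishes at $t=0$ (the hypothesis) and at $t=1$, and decays as $\rho\to\infty$; Lemma~\ref{lem:rig:1} therefore gives $\Psi \equiv 0$ on $(0,1)\times(\eps,\infty)$. It remains to promote $\Psi\equiv 0$ to $a_1 = \cdots = a_{k_\ast} = 0$. Fix $\rho_0 > \eps$ and let $P(x) := \sum_{k=1}^{k_\ast} a_k x^k$. The identity $\Psi(\cdot,\rho_0) \equiv 0$ says precisely that the smooth function $t\mapsto \Phi(t,\rho_0)$ is annihilated on $(0,1)$ by the constant-coefficient operator $P(\p_t)$. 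If $(a_1,\dots,a_{k_\ast})\neq 0$, then $P\not\equiv 0$, so $\Phi(\cdot,\rho_0)$ is (the restriction of) an element of the finite-dimensional solution space of $P(\p_t)u = 0$, hence real-analytic in $t$ (an exponential polynomial). But by the jet-vanishing established above, every $t$-derivative of $\Phi(\cdot,\rho_0)$ vanishes at $t=1$; a real-analytic function whose full jet vanishes at a point is identically zero, so $\Phi(\cdot,\rho_0)\equiv 0$. Since $\rho_0>\eps$ was arbitrary, $\Phi\equiv 0$ on $(0,1)\times(\eps,\infty)$, contradicting the nontriviality of $\Phi^{(j)}$ --- which is forced by the unit jump of $\p_\rho\Phi^0$ (resp.\ of $\Phi^1$) across $\rho=0$ together with interior analyticity/unique continuation in $\rho$, equivalently by the nontriviality of $\p_\rho^2\Phi^{(j)}$ recorded earlier. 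Hence $(a_1,\dots,a_{k_\ast}) = 0$.

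I expect the last step to be the crux: converting the single linear relation among the \emph{traces} $\{\p_t^k\Phi(0,\cdot)\}$ into genuine rigidity. Lemma~\ref{lem:rig:1} is exactly what upgrades this trace relation to the relation $\Psi\equiv 0$ on the whole strip; from there, the constant-coefficient ODE satisfied by $\Phi(\cdot,\rho_0)$ in $t$, combined with the (elementary but indispensable) vanishing of the full $t$-jet of $\Phi$ at $t=1$, closes the argument. The one point requiring care is the regularity/decay input on $\Phi^{(j)}$ and its $t$-derivatives --- smoothness up to $t=1$ for $\rho$ bounded away from the interface, and decay as $\rho\to\infty$ --- which is of the same character as the regularity already used in deriving \eqref{form:ell:0}--\eqref{form:ell:1} and is supplied by the (backward-)parabolic smoothing for the dual problem.
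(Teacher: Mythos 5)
Your proof is correct and follows essentially the same route as the paper's: both reduce to Lemma~\ref{lem:rig:1} to upgrade the trace relation to $\Psi \equiv 0$ on $\Omega_\eps$, and both then exploit the resulting constant-coefficient ODE in $t$ together with the vanishing of the full $t$-jet of $\Phi^{(j)}$ at $t = 1$. The only variation is in closing the ODE step: the paper factors off $\p_t^{k_{min}}$ and integrates backward from $t=1$ using the boundary data, while you observe that $\Phi^{(j)}(\cdot, \rho_0)$ is an exponential polynomial, hence real-analytic, so its vanishing jet at $t=1$ forces it to vanish identically --- a clean reformulation of the same mechanism.
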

\begin{proof} Assume there exist constants $C_k$, not all zero, such that
\begin{align}
\sum_{k = 1}^{k_\ast} C_k \p_t^k \Phi^{(j)}(0, \rho) = 0, \qquad \rho \in (\eps, \infty).
\end{align}
Define 
\begin{align}
f(t, \rho) = \sum_{k = 1}^{k_\ast} C_k \p_t^k \Phi^{(j)}(t, \rho), \qquad (t, \rho) \in \Omega_\eps. 
\end{align}
Observe that $f$ satisfies \eqref{eq:f:f}, and therefore by Lemma \ref{lem:rig:1} we have $f = 0$ on $\Omega_\eps$. Let $k_{min}$ be the minimal $k$ such that $C_k \neq 0$. We therefore have the following ODE in $t$ for each fixed $\rho \in (\eps, \infty)$: 
\begin{subequations}
\begin{align}
&0 = f(t, \rho) = \sum_{k = k_{min}}^{k_\ast} C_k \p_t^k \Phi^{(j)}(t, \rho), \qquad (t, \rho) \in \Omega_\eps \\
&\p_t^{k_{min}} \Phi^{(j)}(1, \rho) = 0, \qquad \rho \in (\eps, \infty),
\end{align}
\end{subequations}
which implies that 
\begin{align}
\p_t^{k_{min}} \Phi^{(j)} = 0 \qquad (t, \rho) \in \Omega_\eps.
\end{align}
By further invoking the boundary conditions for $\p_t^{k} \Phi^{(j)}|_{t = 1} = 0$, $1 \le k \le k_{min}$, we can further deduce that 
\begin{align}
\p_t \Phi^{(j)} = 0 \qquad (t, \rho) \in \Omega_\eps.
\end{align}
This is a contradiction to the definition of $\Phi^{(j)}$. 
\end{proof}

We now want to enhance the linear dependence to include the union of both collections above. To do so, we need to introduce the notion of a \textit{degree} (more precisely, two degrees, $d^{(0)}, d^{(1)}$). We now motivate this notion by considering two examples. 
\begin{example} $\Phi^{(0)}, \Phi^{(1)}$ cannot satisfy the equation 
\begin{align} \label{ex1}
\p_t^2 \Phi^{(1)} = \p_t \Phi^{(1)} + \Phi^{(0)}, \qquad (t, \rho) \in \{ (0, 1) \times (\eps, \infty) \} \cup  \{ (0, 1) \times (-\infty, \eps) \} 
\end{align}
for every $\eps > 0$.  
\end{example}
\begin{proof} To see this, we integrate twice from $t = 1$, $\rho > \eps$ to obtain 
\begin{subequations}
\begin{align}
\Phi^{(1)} = &- \int_t^1 \Phi^{(1)} + \int_t^1 \int_s^1 \Phi^{(0)}, \qquad \rho > \eps \\
\Phi^{(1)} = &  \int_0^t \Phi^{(1)} + \int_0^t \int_0^s \Phi^{(0)}, \qquad \rho < - \eps
\end{align}
\end{subequations}
We now recall that $\Phi^{(1)}$ is continuous across $z = 0$, whereas $\Phi^{(0)} = 1_+ + \overline{\Phi}^{(0)}$. Inserting this latter formula above we obtain 
\begin{subequations}
\begin{align} \n
\Phi^{(1)} = &- \int_t^1 \Phi^{(1)} +  \int_t^1 \int_s^1 1  + \int_t^1 \int_s^1 \overline{\Phi}^{(0)}, \\
=& - \int_t^1 \Phi^{(1)} - \frac{(1-t)^2}{2}  + \int_t^1 \int_s^1 \overline{\Phi}^{(0)}, \qquad \rho > \eps \\
\Phi^{(1)} = & \int_0^t \Phi^{(1)} + \int_0^t \int_0^s \overline{\Phi}^{(0)}, \qquad \rho < - \eps
\end{align}
\end{subequations}
By matching the limit as $\rho \downarrow 0$ to the limit as $\rho \uparrow 0$, we obtain 
\begin{align}
 - \int_t^1 \Phi^{(1)} - \frac{(1-t)^2}{2}  + \int_t^1 \int_s^1 \overline{\Phi}^{(0)} =  \int_0^t \Phi^{(1)} + \int_0^t \int_0^s \overline{\Phi}^{(0)}
\end{align}
By re-arranging and using the identity 
\begin{align*}
 \int_0^t \int_0^s \overline{\Phi}^{(0)} - \int_t^1 \int_s^1 \overline{\Phi}^{(0)} = & \int_0^t \int_0^s \overline{\Phi}^{(0)}  - \int_t^1 \int_0^1 \overline{\Phi}^{(0)} + \int_t^1 \int_0^s \overline{\Phi}^{(0)} \\
 = & \int_0^1 \int_0^s \overline{\Phi}^{(0)}- \int_t^1 \int_0^1 \overline{\Phi}^{(0)} \\
 = & \int_0^1 \int_0^s \overline{\Phi}^{(0)} - (1 - t)\int_0^1 \overline{\Phi}^{(0)},
\end{align*}
we obtain 
\begin{align}
- \frac{(1-t)^2}{2}  =  \int_0^1 \Phi^{(1)} +  \int_0^1 \int_0^s \overline{\Phi}^{(0)} - (1 - t)\int_0^1 \overline{\Phi}^{(0)},
\end{align}
which cannot hold for all $t$ due to the presence of the quadratic term on the left-hand side. 
\end{proof}

This example motivates the notion of $d^{(0)}$, whose purpose it is to count the degree of the polynomial (in $t$) contributed by each term above. 

\begin{definition} Fix any $k \ge 0$. The quantity $d^{(0)}$, called the degree of order zero, is defined as follows
\begin{align}
d^{(0)}[\p_t^{-k}  \Phi^{(0)}] := &k, \\
d^{(0)}[\p_t^{-k}  \Phi^{(1)}] := &k-1.
\end{align}
\end{definition}
The degree $d^{(0)}$ formalizes the counting in the following way. Rewrite (formally) \eqref{ex1} as follows 
\begin{align}
\Phi^{(1)} = \p_t^{-1} \Phi^{(1)} + \p_t^{-2} \Phi^{(0)}.
\end{align}
Applying $d^{(0)}$ to the right-hand side, we get that $d^{(0)}[ \p_t^{-2} \Phi^{(0)}]$ is $2$, which indicates a quadratic term is contributed, whereas $d^{(0)}[ \p_t^{-1} \Phi^{(1)}]$ which indicates that a zero-th order term is contributed. 

We now consider a different example, for which $d^{(0)}$ will be insufficient to obtain a contradiction, therefore motivating a second degree, $d^{(1)}$.
\begin{example}$\Phi^{(0)}, \Phi^{(1)}$ cannot satisfy the equation 
\begin{align} \label{ex2}
\p_t^2 \Phi^{(1)} = \p_t \Phi^{(0)} + \Phi^{(1)}, \qquad (t, \rho) \in \{ (0, 1) \times (\eps, \infty) \} \cup  \{ (0, 1) \times (-\infty, \eps) \} 
\end{align}
for every $\eps > 0$.  
\end{example}
\begin{proof} We again integrate to form the following two identities 
\begin{subequations}
\begin{align}
\Phi^{(1)} = &- \int_t^1 \Phi^{(0)} + \int_t^1 \int_s^1 \Phi^{(1)}, \qquad \rho > \eps \\
\Phi^{(1)} = &  \int_0^t \Phi^{(0)} + \int_0^t \int_0^s \Phi^{(1)}, \qquad \rho < - \eps.
\end{align}
\end{subequations}
In this case, if we are to compare the values of the upper and lower limit at $\rho = 0$, we will not be able to derive a polynomial expression with a \textit{unique} highest order term. This is reflected in the fact that $d^{(0)}$ applied to $\p_t^{-1} \Phi^{(0)}$ and $\p_t^{-2} \Phi^{(1)}$ yield the same value. Instead, we have to compute 
\begin{align}
1 = &\lim_{\rho \downarrow 0} \p_{\rho} \Phi^{(1)} - \lim_{\rho \uparrow 0} \p_{\rho} \Phi^{(1)}.
\end{align}
Differentiating the above expressions, we get 
\begin{subequations}
\begin{align}
\p_{\rho}\Phi^{(1)} = &- \int_t^1 \p_{\rho} \Phi^{(0)} + \int_t^1 \int_s^1 1 +\int_t^1 \int_s^1 \p_{\rho} \overline{\Phi}^{(1)} , \qquad \rho > \eps \\
\p_{\rho} \Phi^{(1)} = &  \int_0^t \p_{\rho} \Phi^{(0)} + \int_0^t \int_0^s \p_{\rho} \overline{\Phi}^{(1)}, \qquad \rho < - \eps.
\end{align}
\end{subequations}
Again, we see that the \textit{unique} quadratic term is contributed by the  $\int_t^1 \int_s^1 1$, whereas all other terms are linear or constant in $t$. This is a contradiction.
\end{proof}

To formalize the counting from this example, we have 
\begin{definition} Fix any $k \ge 0$. The quantity $d^{(1)}$, called the degree of order one, is defined as follows
\begin{align}
d^{(1)}[\p_t^{-k}  \Phi^{(0)}] := &k-1, \\
d^{(1)}[\p_t^{-k}  \Phi^{(1)}] := &k.
\end{align}
\end{definition}

Clearly, we can see that if $d^{(0)}[\p_t^{-k_0} \Phi^{(0)}] = d^{(0)}[\p_t^{-k_1} \Phi^{(1)}]$, this implies that $k_0 = k_1 -1$. In this case, the $d^{(1)}$ degree will not match. This counting procedure can be used to prove linear independence of the full collection, as we demonstrate below. 
\begin{lemma} Fix any $\eps > 0$. The collection
\begin{align}
\mathcal{C}_{k_\ast} = \mathcal{C}^{(0)}_{k_\ast}  \cup \mathcal{C}^{(1)}_{k_\ast}
\end{align}
is linearly independent and nontrivial on $\{0 \} \times (\eps, \infty)$ as well as on $\{1\} \times (-\infty, \eps)$.
\end{lemma}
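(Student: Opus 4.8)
The plan is to combine the previous two lemmas with the degree-counting bookkeeping introduced in the two Examples. Suppose, for contradiction, that there is a nontrivial relation $\sum_{k=1}^{k_\ast} A_k \p_t^k \Phi^{(0)} + \sum_{k=1}^{k_\ast} B_k \p_t^k \Phi^{(1)} = 0$ holding on $\{0\}\times(\eps,\infty)$ (the argument on $\{1\}\times(-\infty,-\eps)$ is symmetric). Set $f := \sum_k A_k \p_t^k \Phi^{(0)} + \sum_k B_k \p_t^k \Phi^{(1)}$ on $\Omega_\eps$. Each $\p_t^k \Phi^{(j)}$ solves the homogeneous adjoint equation $(\rho\p_t + \p_\rho^2)g = 0$ away from $\rho = 0$ and vanishes at $t=1$, $\rho>\eps$ (differentiating the defining conditions in $t$) and at $\rho\to\infty$; hence $f$ satisfies \eqref{eq:f:f}, so by Lemma~\ref{lem:rig:1}, $f\equiv 0$ on all of $\Omega_\eps$, not merely on the slice $t=0$. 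The same argument run on the right slab gives $f\equiv 0$ on $(0,1)\times(-\infty,-\eps)$ as well. So we may assume from the start that the combination $f$ vanishes identically on both slabs $(0,1)\times(\eps,\infty)$ and $(0,1)\times(-\infty,-\eps)$, for every $\eps>0$, i.e. on $(0,1)\times(\R\setminus\{0\})$.

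Next I would ``integrate'' the relation $f=0$ to bring it to the form of the Examples. Formally write the identity as $\sum_k A_k \Phi^{(0)} + \sum_k B_k \Phi^{(1)} = \big(\text{lower-order-in-}\p_t^{-1}\text{ terms}\big)$; more carefully, let $k_0$ (resp. $k_1$) be the largest index with $A_{k_0}\neq 0$ (resp. $B_{k_1}\neq 0$), and apply $\p_t^{-\max(k_0,k_1)}$ to the vanishing combination by integrating the requisite number of times from $t=1$ on the right slab $\rho>\eps$ and from $t=0$ on the left slab $\rho<-\eps$, using the vanishing boundary data $\p_t^k\Phi^{(j)}|_{t=1}=0$ for $\rho>\eps$ and $\p_t^k\Phi^{(j)}|_{t=0}=0$ for $\rho<-\eps$ that hold for all relevant $k$. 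This produces two explicit representations of $\Phi^{(0)}, \Phi^{(1)}$ (or rather the top-order survivor) as sums of iterated $t$-integrals of the $\Phi^{(j)}$'s, exactly as in the Examples, with each summand carrying a definite polynomial degree in $t$ recorded by $d^{(0)}$ and $d^{(1)}$.

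Now comes the matching step, which is the heart of the argument. Use the interface conditions: $\Phi^{(0)}$ is continuous across $\rho=0$ with a unit jump in $\p_\rho$, while $\Phi^{(1)}$ has a unit jump in value with continuous $\p_\rho$; and $\Phi^{(0)} = 1_{\{\rho>0\}} + \overline\Phi^{(0)}$ with $\overline\Phi^{(0)}$ continuous (and similarly $\Phi^{(1)}$ has a built-in jump). Matching the $\rho\downarrow 0$ and $\rho\uparrow 0$ limits of either $\Phi$ itself or its $\p_\rho$-derivative (choosing whichever is continuous, so that the explicit known jump of the other profile contributes a \emph{clean} polynomial $\pm(1-t)^m/m!$ on one side), one gets a polynomial identity in $t$ on $(0,1)$. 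The degree bookkeeping shows: among all the iterated-integral summands, the term coming from integrating the jump contribution $1_{\{\rho>0\}}$ (in the continuity matching) or its $\p_\rho$-analogue has strictly larger $d^{(0)}$ than every other term \emph{unless} a $\p_t^{-k_0}\Phi^{(0)}$ term ties it, in which case $d^{(1)}$ breaks the tie — this is precisely the content of the remark that $d^{(0)}[\p_t^{-k_0}\Phi^{(0)}]=d^{(0)}[\p_t^{-k_1}\Phi^{(1)}]$ forces $k_0=k_1-1$, and then the $d^{(1)}$ values differ. Hence there is a \emph{unique} monomial of top degree in $t$ with a nonzero coefficient (a nonzero multiple of $A_{k_0}$ or $B_{k_1}$), forcing that coefficient to vanish — contradiction. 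Iterating (or simply observing the leading coefficient cannot vanish) kills the whole relation, and nontriviality of $\mathcal{C}_{k_\ast}$ follows since each $\p_t\Phi^{(j)}\not\equiv 0$ by the defining jump conditions.

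The main obstacle I anticipate is making the degree argument fully rigorous rather than formal: one must track that the non-jump pieces $\overline\Phi^{(j)}$, after iterated $t$-integration, genuinely contribute only polynomials in $t$ of the degree predicted by $d^{(0)}, d^{(1)}$ — equivalently, that $\int_0^1 \overline\Phi^{(j)}(t,\rho)\ud t$ and its higher $t$-moments are finite and yield honest $\rho$-independent-in-the-limit constants as $\rho\to 0$, exactly as was done by hand in Examples 2.xx. Once one is willing to invoke the regularity of the dual profiles away from (and in the appropriate one-sided sense up to) $\rho=0$ — regularity already assumed earlier in the excerpt when integrating by parts to derive \eqref{form:ell:0}--\eqref{form:ell:1} — the rest is the clean algebra of comparing leading coefficients.
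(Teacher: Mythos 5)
Your proof takes essentially the same route as the paper's: apply Lemma \ref{lem:rig:1} to upgrade the slice vanishing to vanishing on the whole slab, apply $\p_t^{-k_{\max}}$ by iterated $t$-integration from $t=1$ on $\rho>\eps$ and from $t=0$ on $\rho<-\eps$, and then use the degree bookkeeping $d^{(0)},d^{(1)}$ on the matching condition across $\rho=0$ to isolate a unique top-degree monomial in $t$ whose coefficient cannot vanish. The paper's own write-up is considerably terser — it simply isolates $D_{k_{\max}}=1$ and asserts that computing $d^{(0)}[\p_t^{-k_{\max}}\cdot]$ and $d^{(1)}[\p_t^{-k_{\max}}\cdot]$ of the right-hand side yields a contradiction — whereas you spell out the integration-and-matching step and the tie-breaking observation ($d^{(0)}$ equality forces $k_0=k_1-1$, after which $d^{(1)}$ separates). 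One point of imprecision that your write-up shares with the paper: to run the rigidity lemma on the left slab $(0,1)\times(-\infty,-\eps)$ one also needs $f|_{t=1}=0$ on $(-\infty,-\eps)$, which does not follow from vanishing on $\{0\}\times(\eps,\infty)$ alone (only $f|_{t=0}$ vanishes there automatically via the dual boundary conditions); the natural reading of the lemma — consistent with how the functionals $\ell_j^{(k)}$ pair against the restrictions to both slices simultaneously — is that the combination is assumed to vanish on \emph{both} $\{0\}\times(\eps,\infty)$ and $\{1\}\times(-\infty,-\eps)$, and you should state this hypothesis explicitly rather than appeal to ``the same argument run on the [left] slab,'' which presupposes the very vanishing at $t=1$ that one does not yet have.
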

\begin{proof}Assume there exist constants $C_k$, $D_k$ such that at least one $C_k \neq 0$ and at least one $D_k \neq 0$ and such that  
\begin{align}
\sum_{k = 1}^{k_\ast} C_k \p_t^k \Phi^{(0)}(0, \rho) + \sum_{k = 1}^{k_\ast} D_k \p_t^k \Phi^{(1)}(0, \rho) = 0, \qquad \rho \in (\eps, \infty).
\end{align}
Once again by appealing to Lemma \ref{lem:rig:1}, we can conclude that 
\begin{align}
\sum_{k = 1}^{k_\ast} C_k \p_t^k \Phi^{(0)}(t, \rho) + \sum_{k = 1}^{k_\ast} D_k \p_t^k \Phi^{(1)}(t, \rho) = 0, \qquad (t, \rho) \in \Omega_\eps.
\end{align}
Let $k_{max}$ be the largest index for which either $C_k, D_k \neq 0$. Without loss of generality, we can assume that $D_{k_{max}} = 1$. In this case, we have
\begin{align}
\p_t^{k_{max}} \Phi^{(1)}(t, \rho) + \sum_{k = 1}^{k_{max}-1} D_k \p_t^k \Phi^{(1)}(t, \rho) = - \sum_{k =1}^{k_{max}} C_k \p_t^k \Phi^{(0)}(t, \rho),
\end{align}
for $(t, \rho) \in (0, 1) \times (\eps, \infty)$ and $(t, \rho) \in (0, 1) \times (-\infty, -\eps)$. We now compute $d^{(0)}[\p_t^{-k_{max} }]$ and $d^{(1)}[\p_t^{-k_{max} }]$ of the expression on the right-hand side, which results in a contradiction. 
\end{proof}

\subsection{Structure of Data for Higher Derivatives}

We will define 
\begin{align}
\Xi_L = \widehat{\Xi}_L + Q_L, \qquad \Xi_R = \widehat{\Xi}_R + Q_R.
\end{align}
We will choose $Q_L, Q_R$ to be smooth, compactly supported functions satisfying the local relations 
\begin{align} \label{QLQR}
\p_{\rho}^{2 + 3k} Q_L(0) = - \p_{\rho}^{3k} G_L(0), \qquad \p_{\rho}^{2 + 3k} Q_R(0) = - \p_{\rho}^{3k} G_R(0), \qquad 0 \le k \le k_\ast -1. 
\end{align}
We will then choose $\widehat{\Xi}_L, \widehat{\Xi}_R \in C^\infty_c(\mathbb{R}_+) \times C^\infty_c(\mathbb{R}_+)$ (that is, supported \textit{away} from $\rho = 0$ and also compactly supported in $\rho$). We are thus led to consider the system 
\begin{align}
\ell_0^{(k)}[\widehat{\Xi}_L, \widehat{\Xi}_R] = \tau_0^{(k)}[G], \qquad 1 \le k \le k_\ast, \\
\ell_1^{(k)}[\widehat{\Xi}_L, \widehat{\Xi}_R] = \tau_1^{(k)}[G], \qquad 1 \le k \le k_\ast, 
\end{align}
where due to the compact support away from $\rho = 0$ of $\widehat{\Xi}_L, \widehat{\Xi}_R$, we can write 
\begin{align}
\ell_0^{(k)}[\widehat{\Xi}_L, \widehat{\Xi}_R] = & \int_0^\infty \rho \p_t^{k} \Phi^{(0)}_L \widehat{\Xi}_L - \int_0^\infty \rho \p_t^{k} \Phi^{(0)}_R \widehat{\Xi}_R \\
\ell_1^{(k)}[\widehat{\Xi}_L, \widehat{\Xi}_R] =  & \int_0^\infty \rho \p_t^{k} \Phi^{(1)}_L \widehat{\Xi}_L - \int_0^\infty \rho \p_t^{k} \Phi^{(1)}_R \widehat{\Xi}_R.
\end{align}
We have therefore established the following proposition. 
\begin{proposition} Choose any $(\bar{\Xi}_L, \bar{\Xi}_R) \in \bar{\mathcal{P}}_{k_\ast}$. Fix a pair $(Q_L, Q_R)$ of smooth, compactly supported functions satisfying  \eqref{QLQR}. There exist $2k_\ast$ numbers such that depend on $(\bar{\Xi}_L, \bar{\Xi}_R)$ and $G$, such that the following is true. Let 
\begin{align}
(\Xi_L, \Xi_R) = (\widehat{\Xi}_L, \widehat{\Xi}_R) + (Q_L, Q_R), 
\end{align}
 where
\begin{align}
(\widehat{\Xi}_L, \widehat{\Xi}_R) = (\bar{\Xi}_L, \bar{\Xi}_R) + \sum_{k = 1}^{k_\ast} (c_0^{(k)} \bold{e}_0^{(k)} + c_1^{(k)} \bold{e}_1^{(k)})
\end{align}
Then $(\Xi_L, \Xi_R)$ give rise to a $k^\ast$ differentiable solution of \eqref{modelpb1} in $t$. 
\end{proposition}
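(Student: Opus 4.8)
The plan is to turn the assertion into a finite-dimensional solvability statement and feed it the linear-independence results of the previous subsections. Recall the data is split as $(\Xi_L,\Xi_R)=(\widehat\Xi_L,\widehat\Xi_R)+(Q_L,Q_R)$, with $(Q_L,Q_R)$ fixed once and for all; the relations \eqref{QLQR} are exactly the $2k_\ast$ \emph{local} (Taylor-matching at $\rho=0$) conditions that make $\Xi^{(1)}_{L,R},\dots,\Xi^{(k_\ast)}_{L,R}$ well defined and smooth there. The remaining conditions needed to iterate the one-derivative-gain mechanism of Proposition~\ref{Propcon} a total of $k_\ast$ times are the $2k_\ast$ \emph{integral} constraints
\[
\ell_0^{(k)}[\widehat\Xi_L,\widehat\Xi_R]=\tau_0^{(k)}[G],\qquad \ell_1^{(k)}[\widehat\Xi_L,\widehat\Xi_R]=\tau_1^{(k)}[G],\qquad 1\le k\le k_\ast ,
\]
obtained by inserting the inductive formulas \eqref{para:con:1} (and their higher-$k$ iterates) into the constraint identities \eqref{con0}--\eqref{con1} at each level $1,\dots,k_\ast$ and integrating by parts; the structural point, legitimate because $\supp\widehat\Xi_{L}$ and $\supp\widehat\Xi_{R}$ are compact subsets of $(0,\infty)$, is that after this rearrangement the left-hand side is a \emph{linear functional of $\widehat\Xi$ alone} while the right-hand side $\tau_i^{(k)}[G]$ is a number depending on $G$ (and the fixed $Q$) but not on $\widehat\Xi$. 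It therefore suffices to produce $(\widehat\Xi_L,\widehat\Xi_R)\in\widehat{\mathcal P}$ solving this $2k_\ast\times 2k_\ast$ linear system: then $(\Xi_L,\Xi_R)$ satisfies \eqref{const:const} at every level, and $k_\ast$-fold iteration of Proposition~\ref{Propcon} upgrades the strong solution of \eqref{modelpb1} to $H^{k_\ast}(0,1;H^1(\mathbb{R}))$, the asserted $k_\ast$-fold $\p_t$-regularity.

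First I would fix the correction vectors. The $2k_\ast$ functionals $\{\ell_0^{(k)},\ell_1^{(k)}\}_{k=1}^{k_\ast}$ are linearly independent on $\widehat{\mathcal P}$: if $\sum_{k}C_k\ell_0^{(k)}+\sum_{k}D_k\ell_1^{(k)}=0$, then evaluating on pairs $(\widehat\Xi_L,0)$ with $\widehat\Xi_L\in C^\infty_c((\eps,\infty))$ and $\eps>0$ arbitrary forces
\[
\rho\Big(\sum_{k=1}^{k_\ast}C_k\,\p_t^k\Phi^{(0)}(0,\rho)+\sum_{k=1}^{k_\ast}D_k\,\p_t^k\Phi^{(1)}(0,\rho)\Big)=0\quad\text{on }(\eps,\infty),
\]
whence, dividing by $\rho\neq0$ and letting $\eps\downarrow0$, the preceding lemma on linear independence of $\mathcal{C}_{k_\ast}=\mathcal{C}^{(0)}_{k_\ast}\cup\mathcal{C}^{(1)}_{k_\ast}$ on $\{0\}\times(\eps,\infty)$ gives $C_k=D_k=0$. (Only the left trace is used; the companion statement on $\{1\}\times(-\infty,-\eps)$ gives a second, independent derivation.) Independence of finitely many functionals is equivalent to surjectivity of the evaluation map $\widehat{\mathcal P}\to\mathbb{R}^{2k_\ast}$, $(\widehat\Xi_L,\widehat\Xi_R)\mapsto\big(\ell_i^{(k)}[\widehat\Xi_L,\widehat\Xi_R]\big)_{i\in\{0,1\},\,1\le k\le k_\ast}$ (a covector annihilating the image would be a nontrivial relation among the $\ell_i^{(k)}$), so choosing preimages of the standard basis vectors yields fixed functions $\bold{e}_0^{(k)},\bold{e}_1^{(k)}\in\widehat{\mathcal P}$ — smooth, compactly supported away from $\rho=0$, depending only on the universal dual profiles $\Phi^{(0)},\Phi^{(1)}$ — with $\ell_i^{(k)}[\bold{e}_j^{(m)}]=\delta_{ij}\delta_{km}$.

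It then remains to read off the scalars. Since $(\bar\Xi_L,\bar\Xi_R)\in\bar{\mathcal P}_{k_\ast}$, by \eqref{Pbarkast} we have $\ell_i^{(k)}[\bar\Xi_L,\bar\Xi_R]=0$ for $i\in\{0,1\}$, $1\le k\le k_\ast$. Setting $c_i^{(k)}:=\tau_i^{(k)}[G]-\ell_i^{(k)}[\bar\Xi_L,\bar\Xi_R]=\tau_i^{(k)}[G]$ and $\widehat\Xi=(\bar\Xi_L,\bar\Xi_R)+\sum_{m=1}^{k_\ast}\big(c_0^{(m)}\bold{e}_0^{(m)}+c_1^{(m)}\bold{e}_1^{(m)}\big)$, bilinearity and biorthogonality give $\ell_i^{(k)}[\widehat\Xi_L,\widehat\Xi_R]=c_i^{(k)}=\tau_i^{(k)}[G]$ for every $i,k$, so the integral system holds; with \eqref{QLQR} this is precisely \eqref{const:const} at all levels, and the conclusion follows as in the first paragraph. (These $2k_\ast$ numbers are functionals of $G$; the dependence on $(\bar\Xi_L,\bar\Xi_R)$ dropped out because of the defining property of $\bar{\mathcal P}_{k_\ast}$.)

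The genuine difficulty — and where all the real work lies — is the linear-independence input used in the second paragraph, namely that no nontrivial combination of the traces $\p_t^k\Phi^{(0)}(0,\cdot),\p_t^k\Phi^{(1)}(0,\cdot)$ can vanish near $\rho=\infty$; this is exactly the rigidity result Lemma~\ref{lem:rig:1} together with the degree-counting ($d^{(0)},d^{(1)}$) argument, both already established, after which the present proposition is bookkeeping and duality. The only secondary point not to skip is that $\tau_i^{(k)}[G]$ is genuinely independent of $\widehat\Xi$ — which is what makes the assignment $c_i^{(k)}=\tau_i^{(k)}[G]$ non-circular — and this holds because every $\widehat\Xi$-dependent contribution in the expanded constraints is absorbed into $\ell_i^{(k)}$ by integrations by parts valid on the compact support of $\widehat\Xi$ inside $(0,\infty)$.
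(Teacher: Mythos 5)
Your proof is correct and follows essentially the same route as the paper: split off $(Q_L,Q_R)$ to satisfy the local Taylor conditions \eqref{QLQR}, reduce the remaining requirements to the $2k_\ast$ integral constraints $\ell_i^{(k)}[\widehat\Xi_L,\widehat\Xi_R]=\tau_i^{(k)}[G]$, invoke the rigidity lemma together with the $d^{(0)},d^{(1)}$ degree-counting to establish linear independence of $\{\ell_0^{(k)},\ell_1^{(k)}\}$, and then read off the $2k_\ast$ scalars against a biorthogonal family $\bold{e}_i^{(k)}$. Your explicit observation that, because $\bar{\mathcal P}_{k_\ast}=\bigcap_{i,k}\ker\ell_i^{(k)}$, the coefficients $c_i^{(k)}=\tau_i^{(k)}[G]$ actually depend only on $G$ (and the fixed $Q$), with the $(\bar\Xi_L,\bar\Xi_R)$-dependence dropping out, is a small clarification beyond the paper's conservative phrasing ``depend on $(\bar\Xi_L,\bar\Xi_R)$ and $G$,'' but it is consistent with the paper and does not alter the argument.
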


\section{The Prandtl equations near the FS profiles}

In this section, for the sake of being self-contained, we follow the derivations of \cite{IM22}, as the formulations we work with are overlapping. 

\subsection{The $(x, y)$ Coordinates}

 We start naturally at the perturbative formulation \eqref{remsum1}, which we insert into the Prandtl equations \eqref{eq:PR:0}, and produce the following equations on the remainders, $\psi_R$:
\begin{align}
(u_{FS} + \eps u_R) \p_x u_R + u_R \p_x u_{FS} + (v_{FS} + \eps v_R) \p_y u_R + v_R \p_y u_{FS} - \p_y^2 u_R = 0
\end{align}
We will have use for the tangentially differentiated version of these equations. We apply scaled derivatives by differentiating with respect to $\tau$, which is the normalized tangential scale: 
\begin{align}
\tau := \frac{x-1}{L}, \qquad \tau \in (0, 1). 
\end{align}
To compactify notation, we define 
\begin{align}
u_{R,k} := \p_{\tau}^k u_R, \qquad v_{R,k} := \p_{\tau}^k v_R.  
\end{align}
These differentiated unknowns satisfy the following equation 
\begin{align} \label{sideLH}
&u_P \p_x u_{R, k} + u_{Py} v_{R,k} + \bold{C}_1  \p_y u_{R,k} + \bold{C}_2 u_{R,k}  +\bold{C}_3 \psi_{R,k} - \p_y^2 u_{R, k}= \mathcal{R}_k,
\end{align}
where the coefficients above are defined to be 
\begin{subequations} \label{boldBigC}
\begin{align}
\bold{C}_1 := &v_{FS} + \eps 1_{k \ge 1} v_R\\
\bold{C}_2 := &\p_x u_{FS} + 1_{k \ge 1} k \p_x u_{P} + \eps 1_{k \ge 2} \p_x u_R \\
\bold{C}_3 := & 1_{k = 1} \p_{xy} u_{FS} + 1 _{k \ge 2} k  \p_{xy} u_P, 
\end{align}
\end{subequations}
and the remainder is given by 
and where
\begin{align} \n
\mathcal{R}_k = & -1_{k \ge 3} \sum_{k' = 1}^{k-2} \binom{k}{k'} u_{FS,k-k'} \p_x u_{R, k'} -1_{k \ge 3} \eps \sum_{k' = 1}^{k-2} \binom{k}{k'} u_{R,k-k'} \p_x u_{R, k'}  \\ \n
&- 1_{k \ge 1} \sum_{k' = 0}^{k-1} u_{R,k'} \p_x u_{FS, k-k'} - 1_{k \ge 1} \sum_{k' = 0}^{k-1} \binom{k}{k'} v_{FS, k-k'} \p_y u_{R,k'} \\ \n
&- 1_{k \ge 3} \sum_{k' = 1}^{k-2} \binom{k}{k'} v_{R,k'} \p_y u_{FS,k-k'}  - 1_{k \ge 3} \eps \sum_{k' = 1}^{k-2} \binom{k}{k'} v_{R,k'} \p_y u_{R,k-k'}  \\ \label{defRKRK}
& - 1_{k \ge 2} v_R \p_{xy} u_{FS,k-1} - 1_{k \ge 2} u_{FS,k} \p_x u_R.
\end{align}

\subsection{The $(s, z)$ Coordinates}

We introduce the variables, $(s, z)$, via
\begin{align} \label{z:variable}
\frac{\ud s}{\ud x} = \frac{1}{\Lambda(x)^2}, \qquad z := \frac{y}{\Lambda(x)} = \frac{y}{\Lambda_G(x) + \eps \Xi(x)},
\end{align}
where the ``free-boundary" $\Xi(x)$ is defined via 
\begin{align}
u_P(x, \Lambda(x)) = u_{FS}(x, \Lambda_G(x) + \eps \Xi(x)) + \eps u_R(x, \Lambda_G(x) + \eps \Xi(x))= 0. 
\end{align}
We now transform our functions to 
\begin{subequations}
\begin{align} \label{change:function}
&u_k(s, z) = u_{R, k}(x, y), \qquad v_k(s, z) := - \int_0^z \p_s u_k, \qquad \psi_k(s, z) := \int_0^z u_k \\ 
&\bar{u}(s, z) :=  u_{FS}(x, y), \qquad \bar{v}(s, z) := - \int_0^z \p_s \bar{u}, \qquad \bar{\psi}(s, z) := \int_0^z \bar{u} \\
&\lambda(s) := \Lambda(x), \qquad \lambda_G(s) := \Lambda_G(x), \qquad \mu(s) :=  \Xi(x).
\end{align}
\end{subequations}
We also define the background profiles: 
\begin{align}
[\bar{u}, \bar{v}](s, z) := [u_{FS}, v_{FS}](x, y), \qquad \bar{w}(s, z) := \bar{u}(s, z) + \eps u(s, z).  
\end{align}
In this $(s, z)$ coordinate system, the system \eqref{sideLH} reads as follows: 
\begin{align} \label{ukstruc1}
& \bar{w} \p_s u_k - \p_z \bar{w} \p_s \psi_k + \mathcal{A}[\overline{\psi}, \psi_k] - \p_z^2 u_k =  \lambda^2 \mathcal{R}_k, \qquad (s, z) \in (1, 1 + \bar{L}) \times (0, \infty). 
\end{align}
The parameter $\overline{L}$ is related to $L$ itself by $\bar{L} := \int_{1}^{1+L} \frac{1}{\Lambda(x)^2} \ud x$. The operator $\mathcal{A}$ appearing above is defined via 
\begin{align} \label{definitionA}
\mathcal{A}[\psi_k] := & \bold{c}_1 \p_z u_k + \bold{c}_2 u_k   + \bold{c}_3 \psi_k, 
\end{align}
with coefficients 
\begin{subequations}
\begin{align}
\bold{c}_1 := & \bold{C}_1(x, y) - \frac{\lambda'}{\lambda} \bar{w} z - 1_{k = 0} \eps \frac{\lambda'}{\lambda} \psi\\
\bold{c}_2 := & \bold{C}_2(x, y) + \frac{\lambda'}{\lambda} z \bar{w}_z \\
\bold{c}_3 := & \bold{C}_3(x, y) - \frac{\lambda'}{\lambda}\bar{w}_z 1_{k \ge 1} - \frac{\lambda'}{\lambda} \bar{u}_z 1_{k = 0}
\end{align}
\end{subequations}
where $\bold{C}_i$ are defined above in \eqref{boldBigC}. Associated to the transport structure of \eqref{ukstruc1}, we define 
\begin{align}
U_k := \bar{w} u_k - \bar{w}_z \psi_k.
\end{align}

\subsection{The $(s, Y)$ Coordinates}

We will next need the following variables, which are localized near the interface $\{z = 1\}$:
\begin{align} \label{change:1}
Y := \bar{w}(s, z), \qquad \phi_k(s, Y) := \psi_k(s, z), \qquad V_k(s, Y) := \p_Y^2 \phi_k(s, Y). 
\end{align}
\begin{align} \label{real:sys:omY}
Y \p_s V_k - |\bar{w}_z|^2\p_Y^2 V_k = &   \underline{\tau_1} \p_Y V_k + \underline{\tau_0} V_k + \underline{\tau_{-1}}  \p_Y \phi_k + \underline{\tau_{-2}} \phi_k +  \frac{\p_z}{\bar{w}_z} \Big( \frac{\mathcal{R}_k}{\bar{w}_z} \Big).
\end{align}
The coefficients $\underline{\tau}_i$ are defined below: 
\begin{subequations} \label{utau:def}
\begin{align}
\underline{\tau_1}(s, Y) := &\bar{w} \bar{w}_s + \bold{c}_1 \bar{w}_z - \bar{w}_{zz}, \\ 
\underline{\tau_0}(s, Y) := & \frac{\p_z}{\bar{w}_z}( \bar{w} \bar{w}_s + \bold{c}_1 \bar{w}_z - \bar{w}_{zz} ) + \frac{1}{\bar{w}_z} ( \bar{w} \bar{w}_{sz} - \bar{w}_s \bar{w}_z + \bold{c}_2 \bar{w}_z + \bold{c}_1 \bar{w}_{zz} - \bar{w}_{zzz} )   \\
\underline{\tau_{-1}}(s, Y) := & \frac{\p_z}{\bar{w}_z}\Big( \frac{ \bar{w} \bar{w}_{sz} - \bar{w}_s \bar{w}_z - \bar{w}_{zzz} + \bold{c}_2 \bar{w}_z + \bold{c}_1 \bar{w}_{zz} }{\bar{w}_z} \Big) + \frac{\bold{c}_3}{\bar{w}_z}, \\
\underline{\tau_{-2}}(s, Y) := & \frac{\p_z}{\bar{w}_z} \Big( \frac{\bold{c}_3}{\bar{w}_z} \Big).
\end{align}
\end{subequations}

\subsection{The $(s, Z)$ Coordinates}

We finally introduce the rescaling 
\begin{align} \label{Eikonal:1}
Z = \frac{Y}{\bar{w}_z(s, 0)^{\frac23}}, \qquad \Omega_k(s, Z) = V_k(s, Y),
\end{align}
where the new vorticity $\Omega_k(s, Z)$ satisfies 
\begin{align} \label{real:sys:omega}
&Z \p_s \Omega_k - \p_Z^2 \Omega_k = \tau_2 \p_Z^2 \Omega_k +   \tau_1 \p_Z \Omega_k + \tau_0 \Omega_k + \tau_{-1} \p_Z \Phi_k + \tau_{-2} \Phi_k + \bold{F}_k, 
\end{align}
where the coefficients are defined as follows:
\begin{subequations} \label{tau:def}
\begin{align}
\tau_2(s, Z) := &\frac{\bar{w}_z^2}{\bar{w}_z(s, 0)^2} \\
\tau_1(s, Z) := & - \frac23 \frac{\bar{w}_z^2 \bar{w}_{sz}(s, 0)}{\bar{w}_z(s, 0)^{\frac{11}{3}}}Z + \frac{\underline{\tau}_1}{\bar{w}_z(s, 0)^{\frac43}}, \\ 
\tau_0(s, Z) := & \frac{ \underline{\tau}_0}{\bar{w}_z(s, 0)^{\frac23}} \\ 
\tau_{-1}(s, Z) := & \frac{ \underline{\tau}_{-1}}{\bar{w}_z(s, 0)^{\frac43}} \\
\tau_{-2}(s, Z) := &   \frac{ \underline{\tau}_{-2}}{\bar{w}_z(s, 0)^{\frac23}}
\end{align}
\end{subequations}
and the forcing is defined via 
\begin{align} \label{boldF}
\bold{F}_k(s, Z) := & \frac{\p_z}{\bar{w}_z} \Big( \frac{\mathcal{R}_k}{\bar{w}_z} \Big)(s, z).
\end{align}

We introduce now the cut-off function $\chi_I$:
\begin{align} \label{chidef}
\chi_I(Z) := \chi \Big( \frac{Z}{\delta} \Big), \qquad \chi(p) := \begin{cases} 0, \qquad p < -1 \\ 1, \qquad - \frac{9}{10}< p < \frac{9}{10} \\ 0, \qquad p > 1 \end{cases}
\end{align}
where $0 < \delta << 1$ is a universal small constant. 

Subsequently, we introduce the localized vorticity as follows:
\begin{align} \label{defn:Chi:I}
\Omega_{k,I} (s, Z) := \Omega_k(s, Z) \chi_I(Z). 
\end{align}
This localized vorticity satisfies the equation 
\begin{align} \label{loc:sys:1}
&Z \p_s \Omega_{k,I} - \p_Z^2 \Omega_{k,I} =   F_{k,I}, \qquad (s, Z) \in (1, 1 + \bar{L}) \times \mathbb{R} 
\end{align}
where the forcing, $F_{k,I}$, appearing above is defined by 
\begin{align}\n
F_{k,I} := & \chi\Big(\frac{Z}{\delta}\Big) (   \tau_1 \p_Z \Omega_k + \tau_0 \Omega_k + \tau_{-1} \p_Z \Phi_k + \tau_{-2} \Phi_k)   - \frac{\chi''(\frac{Z}{\delta})}{\delta^2} \Omega_k - \frac{2}{\delta} \chi'(\frac{Z}{\delta})\p_Z \Omega_k \\  \label{loc:sys:I1}
&  + \bold{F}_k \chi\Big(\frac{Z}{\delta}\Big).
\end{align}

\subsection{The $(t, \rho)$ Coordinates}

We define now the stretched variables 
\begin{align}
t = \frac{s-1}{\overline{L}}, \qquad \rho := \frac{Z}{\overline{L}^{\frac13}}, \qquad \Xi_k(t, \rho) = \Omega_k(s, Z) \chi_I(Z). 
\end{align}
In this rescaling, we have the following system for $0 \le k \le k_{\ast}$:
\begin{subequations} \label{loc:sys:1:o:xi} 
\begin{align}
&\rho \p_t \Xi_k - \p_\rho^2 \Xi_k = \overline{L}^{\frac23} G_k, \qquad (t, \rho) \in (0, 1) \times \mathbb{R} \\
&\Xi(t, \pm \infty) = 0,  \qquad t \in (0, 1) \\
&\Xi_k(0, \rho) := \Xi^{(k)}_{Left}(\rho), \qquad 0 < \rho < \infty \\
& \Xi_k(1, \rho) := \Xi^{(k)}_{Right}(\rho), \qquad - \infty < \rho < 0.
\end{align}
\end{subequations}
We write the expression for $G_k$ as follows 
\begin{align} \n
G_k(t, \rho) = & \Big( \tau_2 \overline{L}^{-\frac13} \rho \p_\rho^2 \Xi_{k} +  \tau_1 \overline{L}^{-\frac13} \p_\rho \Xi_k  + \tau_0 \Xi_k + \tau_{-1} \p_z \psi_k \chi_I + \tau_{-2} \psi_k \chi_I \Big) \\ \label{exp:Zisit}
 & - \frac{ \chi''( \frac{\overline{L}^{\frac13}\rho}{\delta} )}{\delta^2} \Omega_k - \frac{2}{\delta} \chi'\Big( \frac{\overline{L}^{\frac13}\rho}{\delta} \Big) \p_Z \Omega_k +\bold{F}_k \chi_I.
\end{align}
The operator which takes $\Omega_{k}$ to $\Omega_{k+1}$, while scales like $\p_t$, is formally given in the following manner:
\begin{align} \label{slashD}
\Xi_{k+1} = \p_t \Xi_k +\Big( \frac{\p_t \bar{w}_z}{\bar{w}_z} \Xi_k+ \frac{1}{\bar{w}_z^2} \p_t \Big( \frac{\bar{w}_{zz}}{\bar{w}_z} \Big) u_k \Big) \chi\Big( \frac{\overline{L}^{\frac13}\rho}{\delta} \Big) = \slashed{D} \Xi_k,
\end{align}  
where we will have a use for the following ``error" term:
\begin{align} \label{defslashE}
\slashed{E}_k :=\Big(  \frac{\p_t \bar{w}_z}{\bar{w}_z} \Xi_k+ \frac{1}{\bar{w}_z^2} \p_t \Big( \frac{\bar{w}_{zz}}{\bar{w}_z} \Big) u_k\Big) \chi\Big( \frac{\overline{L}^{\frac13}\rho}{\delta} \Big).
\end{align}

\subsection{Data on the Sides}

Our main objective in this paper is to study the given data, $(F_{Left}, F_{Right})$ (and to track the required constraints on these data). We introduce the linearized good unknown: 
\begin{align}
\omega_g := \p_y u - \frac{u_{FS}''}{u_{FS}'} u,
\end{align}
which is the linearized version of $\Omega$. We will assume we are prescribed  
\begin{align}
\omega_g|_{x = 1}(y) = &F_{Left}\Big( \frac{y - y_1(\eps)}{L^{\frac13}} \Big), \qquad y > y_1(\eps)  \\
\omega_g|_{x = 1 + L}(y) = & F_{Right}\Big( \frac{y - y_2}{L^{\frac13}} \Big), \qquad y < y_2
\end{align}
We define the following quantities on the left and the right: 
\begin{align}
U_k(1, z) = & \zeta^{(k)}_{Left}(z), \qquad z > 1, \qquad U_k(1 + \bar{L}, z) =  \zeta^{(k)}_{Right}(z), \qquad 0 < z < 1, \\
\psi_k(1, z) = & \psi^{(k)}_{Left}(z), \qquad z > 1, \qquad \psi_k(1 + \bar{L}, z) = \psi^{(k)}_{Right}(z), \qquad 0 < z < 1\\
u_k(1, z) = & u^{(k)}_{Left}(z), \qquad z > 1, \qquad u_k(1 + \bar{L}, z) = u^{(k)}_{Right}(z), \qquad 0 < z < 1.
\end{align}
The main quantity we will keep track of is the following
\begin{align}
\Xi^{(k)}_{Left}(\rho) := & \Omega_{k,I}(1, Z), \qquad  \Xi^{(k)}_{Right}(\rho) :=    \Omega_{k,I}(1 + \overline{L}, Z).
\end{align}

\subsection{Function Spaces}

The following norms will be used to measure our data:
\begin{align} \label{Anorm}
\| (f, g) \|_{A} := & \| f \|_{W^{3,\infty}_{\rho}(\mathbb{R}_+)} +\| g \|_{W^{3,\infty}_{\rho}(\mathbb{R}_+)}  \\ \label{Bnorm}
\| (f, g) \|_B := & \sum_{j = 0}^4 (\| \p_z^j f \chi_{O,j} \langle z \rangle^{2M_{Max}} \|_{L^2_z(\mathbb{R}_+)} + \| \p_z^j g \chi_{O,j}  \|_{L^2_z(0,1)} ), \\ \label{Cnorm}
\| (f, g) \|_C := & \sum_{\ell = 0}^4 ( \| Z^{\ell} \p_Z^{\ell} f \|_{L^2_Z} + \| Z^{\ell} \p_Z^{\ell} g \|_{L^2_Z})
\end{align}

For the purposes of this paper, we do not need to introduce our complete $Z_k$ norm in full precision. Instead, it suffices to adopt the following bound: 
\begin{align} \n
\| (\psi, \Omega_I) \|_{Z_k} \gtrsim & \sup_s ( \| \Omega_{k,I} \|_{L^2_Z} )+ \| Z \p_Z \Omega_{k,I} \|_{L^2_Z} )  + \| \Omega_{k,I} \|_{L^\infty_Z L^6_s} + (\| u \|_{L^\infty_s L^\infty_z} + \| \psi \|_{L^\infty_s L^\infty_z}) \\ \label{calZ}
& + \sup_s \| \chi_O \p_Z^2 \Omega_{k} \langle z \rangle^{M_{Max} - k} \|_{L^2_z},
\end{align}
which is a consequence of the definition of the norm $Z_k$ from \cite{IM22}. Moreover, we prove in \cite{IM22} the bound 
\begin{align} \label{usec1}
\| (\psi, \Omega_I) \|_{Z_k} \lesssim o_L(1) \| F_{Left}, F_{Right} \|_{\mathcal{P}}.
\end{align}

\section{Derivation of Constraints}

\subsubsection*{Constraints under the $\slashed{D}$ operator}

The first thing we do is to introduce precisely the operator $\slashed{D}$ into the constraint equations. Consider the system \eqref{normdom1}. We define the modified functionals (for $k = 0, \dots, k_\ast - 1$):
\begin{align} \n
\beta^{(k)}_0 := & L^{\frac23} \int_0^\infty \Phi^0_L G^{(k)}_L -  L^{\frac23}\int_{-\infty}^0 \Phi^0_R G^{(k)}_R + \int_0^\infty \Phi^0_L \rho \slashed{E}_{k,L} - \int_{-\infty}^0 \Phi^0_R \rho \slashed{E}_{k,R} \\
\beta^{(k)}_1:= & L^{\frac23} \int_0^\infty \Phi^1_L G^{(k)}_L - L^{\frac23} \int_{-\infty}^0 \Phi^1_R G^{(k)}_R+ \int_0^\infty \Phi^1_L \rho \slashed{E}_{k,L} - \int_{-\infty}^0 \Phi^1_R \rho \slashed{E}_{k,R}  \\ \label{alphak0}
\alpha^{(k)}_0 := &-  L^{\frac23}\int G_{k+1} \Phi^0 \ud t \ud \rho + \int_0^1  \slashed{E}_k(t, 0) \ud t, \\ \label{alphak1}
\alpha^{(k)}_1 := &-  L^{\frac23}\int G_{k+1} \Phi^1  \ud t \ud \rho+  \int_0^1 \p_\rho  \slashed{E}_k(t, 0) \ud t
\end{align}

\begin{lemma} Assume the following constraints are satisfied
\begin{align} \label{dg:mod:a}
\bar{\ell}_2[\Xi^{(k)}_L, \Xi^{(k)}_R] = &  \bar{\beta}^{(k)}_2 \\  \label{dg:mod:b}
\ell_2[\Xi^{(k)}_L, \Xi^{(k)}_R] = & \beta_2^{(k)} \\  \label{dg:mod:c}
\ell_1[\Xi^{(k)}_L, \Xi^{(k)}_R] = & \beta_1^{(k)} + \alpha_1^{(k)},  \\  \label{dg:mod:d}
\ell_0[\Xi^{(k)}_L, \Xi^{(k)}_R] = & \beta_0^{(k)} + \alpha_0^{(k)}.
\end{align}
Then the unique weak solution $\Xi_k$ is $H^1_t$ and $\slashed{D} \Xi_k = \Xi_{k+1}$.
\end{lemma}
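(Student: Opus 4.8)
The strategy is to reduce the statement about the $k$-th level to the $k=1$ case already handled in Proposition~\ref{Propcon}, but now tracking the extra error terms $\slashed{E}_k$ that appear because $\Xi_{k+1} = \slashed{D}\Xi_k \neq \p_t \Xi_k$. First I would record that the system \eqref{normdom1} for $\Xi_k$ is precisely of the form \eqref{modelpb1} with source $L^{\frac23}G_k$, so the weak solution theory and the dual-profile identities \eqref{con0}--\eqref{con1} apply verbatim with $G$ replaced by $L^{\frac23}G_k$; this already produces the constraints involving $\beta^{(k)}_0,\beta^{(k)}_1$ and the part of $\alpha^{(k)}_0,\alpha^{(k)}_1$ coming from $-L^{\frac23}\int G_{k+1}\Phi^{0,1}$, since (as in \eqref{para:con:1}) $\p_t\Xi_k = \frac{\p_\rho^2\Xi_k + L^{\frac23}G_k}{\rho}$.

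Next I would insert the definition $\Xi_{k+1} = \p_t\Xi_k + \slashed{E}_k$ from \eqref{slashD}--\eqref{defslashE} into these identities and collect the $\slashed{E}_k$-dependent terms. Testing the equation for $\Xi_k$ against the dual profiles $\Phi^0,\Phi^1$ and integrating by parts in $t$ and $\rho$, the term $\int_\Omega \p_t(\text{source})\,\Phi^j$ gets replaced by a contribution that, after using $\p_t\Xi_k = \Xi_{k+1} - \slashed{E}_k$ inside the boundary integrals at $\{t=0\}\cap\{\rho>0\}$ and $\{t=1\}\cap\{\rho<0\}$, produces exactly the $\int_0^\infty \Phi^j_L\rho\,\slashed{E}_{k,L} - \int_{-\infty}^0\Phi^j_R\rho\,\slashed{E}_{k,R}$ pieces in $\beta^{(k)}_j$ and the $\int_0^1 \slashed{E}_k(t,0)\,\de t$, $\int_0^1 \p_\rho\slashed{E}_k(t,0)\,\de t$ pieces in $\alpha^{(k)}_j$. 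The jump/matching conditions satisfied by $\Phi^0,\Phi^1$ at $\rho=0$ (i.e. $[\p_\rho\Phi^0]=-1$, $\Phi^0$ continuous; $[\Phi^1]=1$, $\p_\rho\Phi^1$ continuous) are what turn the interior testing into boundary functionals of $\Xi_k$ at $\rho=0$, and they also produce the two purely local constraints \eqref{dg:mod:a}--\eqref{dg:mod:b} (which are the analogues of \eqref{con:a}--\eqref{con:b} for the $k$-th unknown, guaranteeing $\Xi_{k+1}$ is well-defined via division by $\rho$). So far this is bookkeeping: one matches the four identities \eqref{dg:mod:a}--\eqref{dg:mod:d} to the four scalar conditions obtained from the dual-profile pairing plus the local matching at $\rho = 0$.

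The substantive part — the converse direction — is to show that once \eqref{dg:mod:a}--\eqref{dg:mod:d} hold, the weak solution $\Xi_k$ actually gains a $\p_t$-derivative \emph{and} the gained derivative equals $\Xi_{k+1}$ (not just some abstract $H^1_t$ function). Here I would mimic the argument of Proposition~2.7 of \cite{DMR} as adapted in Proposition~\ref{Propcon}: differentiate the equation formally to get an equation for $w := \slashed{D}\Xi_k$, note that $w$ solves a problem of the form \eqref{modelpb1} with a source built from $G_k, G_{k+1}$ and $\slashed{E}_k$, and that the four constraints are exactly the compatibility/orthogonality conditions making this auxiliary problem solvable in $C^0(H^1)$; then a uniqueness argument (the weak solution to \eqref{modelpb1} is unique) forces the $\p_t$-derivative of $\Xi_k$ to coincide with $w - \slashed{E}_k$, i.e. $\p_t\Xi_k + \slashed{E}_k = \Xi_{k+1}$, which is the claim. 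The main obstacle I expect is making the ``differentiate the equation'' step rigorous at the level of weak solutions: one cannot simply apply $\slashed{D}$ to \eqref{normdom1} because $\slashed{D}$ involves $t$-dependent coefficients and the cutoff $\chi$, so the error term $\slashed{E}_k$ is genuinely present, and one must justify the integrations by parts near $\rho = 0$ where $\rho\p_t$ degenerates. The cleanest route is a difference-quotient argument in $t$: form $\tfrac{1}{h}(\Xi_k(t+h,\cdot) - \Xi_k(t,\cdot))$, obtain uniform $L^2_\rho$ bounds from the constraints (this is where \eqref{dg:mod:a}--\eqref{dg:mod:d} enter, exactly as the two constraints \eqref{con:c}--\eqref{con:d} entered in Proposition~\ref{Propcon}), pass to the limit, and then identify the limit with $\Xi_{k+1} - \slashed{E}_k$ using the explicit formula \eqref{slashD} and the already-established regularity of $u_k$ and $\bar w_z$ away from the interface.
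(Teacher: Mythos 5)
Your plan captures the right ingredients (dual profiles $\Phi^0,\Phi^1$, the $\slashed{E}_k$ correction from $\slashed{D} = \p_t + \slashed{E}_k$, the inductive relations for $\Xi_L^{(k+1)},\Xi_R^{(k+1)}$, and the analogy with Proposition~\ref{Propcon}), and it correctly identifies where the bookkeeping happens. But the rigorous route you propose is genuinely different from the one the paper takes, and one step in your logic is circular as stated.

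The circularity: you set $w := \slashed{D}\Xi_k$ and then argue that uniqueness ``forces'' $\p_t\Xi_k = w - \slashed{E}_k$; but if $w$ is \emph{defined} as $\slashed{D}\Xi_k = \p_t\Xi_k + \slashed{E}_k$, that equality is tautological and says nothing, while the fact that $\slashed{D}\Xi_k$ is even well-defined as an $L^2$ object is precisely what we are trying to prove. The paper avoids this by going the other way: it introduces $w$ \emph{ab initio} as the unique weak solution to the level-$(k+1)$ problem with source $L^{2/3}G_{k+1}$ and boundary data $\Xi_L^{(k+1)},\Xi_R^{(k+1)}$ (which exist independently via \eqref{iteration:1}), and then \emph{inverts} $\slashed{D}$ by defining $\widetilde{\Xi}_k := \Xi_L^{(k)} + \int_0^t(w - \slashed{E}_k)\,\ud t'$ for $\rho > 0$ and $\widetilde{\Xi}_k := \Xi_R^{(k)} + \int_1^t(w - \slashed{E}_k)\,\ud t'$ for $\rho < 0$. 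The whole content of the argument is then that $\widetilde{\Xi}_k$ and $\p_\rho\widetilde{\Xi}_k$ match across $\rho = 0$, which produces two conditions on $\int_0^1\Xi_{k+1}(t,0)\,\ud t$ and $\int_0^1\p_\rho\Xi_{k+1}(t,0)\,\ud t$. Those two integrals are evaluated with the dual identities applied to $w$ (not to $\Xi_k$ directly, as you suggest), and after using \eqref{iteration:1} they become precisely \eqref{dg:mod:c}--\eqref{dg:mod:d}, with \eqref{dg:mod:a}--\eqref{dg:mod:b} serving to make the division by $\rho$ in \eqref{iteration:1} non-singular. Uniqueness of the weak solution then gives $\widetilde{\Xi}_k = \Xi_k$ and hence $\slashed{D}\Xi_k = w = \Xi_{k+1}$.

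Two consequences worth noting for you. First, the role of the constraints in the paper is a \emph{matching condition at $\rho = 0$}, not a uniform $L^2_\rho$ bound as in your difference-quotient sketch; the constraint mechanism is algebraic/geometric (jump conditions at the interface), not an a priori estimate. Second, your proposed difference-quotient argument is a legitimately different strategy, but it is not what the paper does, and it faces a real obstacle here that you do not address: because the problem is forward in $t$ for $\rho > 0$ and backward for $\rho < 0$, the quotient $h^{-1}(\Xi_k(t+h)-\Xi_k(t))$ lives on a shrinking time slab with \emph{mixed} side data that is itself part of the unknown (the trace of $\Xi_k$ at $t = h$, $\rho > 0$ and $t = 1-h$, $\rho < 0$), and it is not obvious how the four constraints translate into uniform-in-$h$ control of those traces without essentially reproving the matching argument. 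The direct construction via inverting $\slashed{D}$ sidesteps this entirely.
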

\begin{proof} This follows very closely the argument of Proposition 2.7 in \cite{DMR}. Consider $w$ to be the candidate 
\begin{subequations} \label{normdom1cand} 
\begin{align}
&\rho \p_t w - \p_\rho^2 w =  L^{\frac23} G_{k+1}, \qquad (t, \rho) \in (0, 1) \times \mathbb{R} \\
&w(t, \pm \infty) = 0,  \qquad t \in (0, 1), \\
&w_{Left}(\rho) := \Xi^{(k+1)}_L(\rho), \qquad \rho > 0, \\
&w_{Right}(\rho) := \Xi_R^{(k+1)}(\rho), \qquad \rho < 0,
\end{align}
\end{subequations}
We want to now invert $\slashed{D}$ to go from $w$ to $\Xi_k$. Recalling the expression \eqref{slashD}, we have 
\begin{align}
\Xi_k = & \Xi^{(k)}_L(\rho) + \int_0^t \p_t \Xi_k \ud t' =   \Xi^{(k)}_L(\rho) + \int_0^t (\Xi_{k+1} - \slashed{E}_k) \ud t', \qquad \rho > 0 \\
\Xi_k = & \Xi^{(k)}_R(\rho) + \int_1^t \p_t \Xi_k \ud t' =   \Xi^{(k)}_R(\rho) + \int_1^t (\Xi_{k+1} - \slashed{E}_k) \ud t', \qquad \rho < 0
\end{align}
Evaluation at $\rho = 0$ leads to the conditions 
\begin{align}
\int_0^1 \Xi_{k+1}(t, 0) \ud t = & \Xi^{(k)}_R(0) - \Xi^{(k)}_L(0) + \int_0^1 \slashed{E}_k(t, 0) \ud t \\
\int_0^1 \p_\rho \Xi_{k+1}(t, 0) \ud t = & \p_\rho \Xi^{(k)}_R(0) - \p_\rho \Xi^{(k)}_L(0) + \int_0^1 \p_\rho \slashed{E}_k(t, 0) \ud t 
\end{align} 
We now use the ``dual" identities, (2.27) in \cite{DMR} to write (for $j = 0, 1$)
\begin{align}
\int_0^1 \p_\rho^j  \Xi_{k+1}(t, 0) \ud t = \int L^{\frac23} G_{k+1} \Phi^j + \int_0^\infty \rho \Xi^{(k+1)}_L \Phi^j \ud \rho - \int_{-\infty}^0 \rho \Xi^{(k+1)}_{R} \Phi^j \ud \rho.
\end{align}
Upon equating both sides, we obtain 
\begin{align}
\p_\rho^j \Xi^{(k)}_L(0) - \p_\rho^j \Xi^{(k)}_R(0) + \int_0^\infty \rho \Xi^{(k+1)}_L \Phi^j - \int_{-\infty}^0 \rho \Xi^{(k+1)}_R \Phi^j = - \int L^{\frac23} G_{k+1} \Phi^j + \int_0^1 \p_\rho^j \slashed{E}_k(t, 0) \ud t.
\end{align}
The final step is to now use the inductive relations for $\Xi^{(k+1)}_L, \Xi^{(k+1)}_R$
\begin{align} \label{iteration:1}
\Xi^{(k+1)}_L = & \frac{\p_\rho^2 \Xi^{(k)}_L + L^{\frac23} G^{(k)}_L}{\rho} + \slashed{E}_{k,L}, \qquad \Xi^{(k+1)}_R =  \frac{\p_\rho^2 \Xi^{(k)}_R + L^{\frac23}G^{(k)}_R}{\rho} + \slashed{E}_{k,R},
\end{align}
where $\slashed{E}_{k,L} = \slashed{E}_k|_{t = 0}$ and similarly for $\slashed{E}_{k,R}$. From here, the calculation proceeds exactly as in Proposition \ref{Propcon}.
\end{proof}

\subsubsection*{Constraints on $F_{Left}, F_{Right}$}

Our next task is to convert the constraints into constraints on $F_{Left}, F_{Right}$ (which we abbreviate as $(F_L, F_R)$ for now). We briefly recall the definition of $(F_L, F_R)$ versus $(\Xi_L, \Xi_R)$: 
\begin{align}
F_L(\eta) = ( \p_y u - \frac{\p_y^2 \bar{u}_{FS}}{\p_y \bar{u}_{FS}} u)|_{x = 0}(y), \qquad \Xi_L(\rho) = ( \p_y u - \frac{\p_y^2 \bar{w}}{\p_y \bar{w}} u)|_{x = 0}(y),
\end{align}
where the fast coordinates appearing above are defined by
\begin{align}
\eta := \frac{y-y_1(\eps)}{L^{\frac13}}, \qquad \rho = \frac{Z}{L^{\frac13}}.
\end{align} 
Therefore, we need to account for two aspects when going from $F_L$ to $\Xi_L$: the change of function as well as the change of coordinate. We define the map
\begin{align} \label{IplusT}
\bold{I} + \bold{T} := (F_{L}, F_R) \mapsto (\Xi_{L}, \Xi_R),
\end{align}
where $\bold{I}$ is the identity. This map is not explicit, but is well defined. Indeed, we first define the coordinate shift (say at $t = 0$) via 
\begin{align}
\rho = \rho[F_{L}](y) = \bar{u}(1, y) + \eps u[F_{L}, \psi](1, y).
\end{align}
We are therefore able to think of $\rho = \rho(\eta)$. Above, the function $u$ is given by the integral formula 
\begin{align}
u_{L} := u[F_{L}](1, y) = \gamma_0[\psi](0) \bar{u}_z + \bar{u}_z \int_1^z \frac{F_{L}(\eta(z'))}{\bar{u}_z} \ud z'
\end{align}
Finally, we define $\bold{I} + \bold{T}$ through the map 
\begin{align} \label{IplusT}
\Xi_{L}(\rho[F_{L}](\eta)) = F_{L}(\eta) - \eps \Big( \frac{\bar{u}_z \p_z^2 u_{L} - \bar{u}_{zz} \p_z u_{L}}{\bar{u}_z (\bar{u}_z + \p_z u_{L})} \Big) u_{L}
\end{align}
We note that prescribing $(F_{L}, F_R) \in \mathcal{P}$ and then determining $(\Xi_{L}, \Xi_R)$ through the nonlinear map $\bold{I} + \bold{T}$ ensures that $(\Xi_{L}, \Xi_R)$ is in the range of $\bold{I} + \bold{T}$. This property is important to preserve, as it will ensure that we can invert the map from $(\Xi_{L}, \Xi_R)$ to $(F_{L}, F_R)$.

\subsubsection*{Intrinsic Constraints}

The first thing we do is to choose $(Q_L, Q_R)$ according to standard parabolic regularity constraints. We will choose 
\begin{align}
F_L = \widehat{F}_L + Q_L, \qquad F_R = \widehat{F}_R + Q_R, 
\end{align}
where $(Q_L, Q_R)$ will be chosen as follows. Writing the constraints for $k = 1, 2$, we find 
\begin{align}
&(k = 1) \qquad \Xi_L''(0) = - L^{\frac23} G_L(0),\\
&(k = 2) \qquad \p_{\rho}^5 \Xi_L(0) = C_1 \p_{\rho}^3 G_L(0) - L^{\frac23} G_L^{(1)}(0) - \p_{\rho}^2 \slashed{E}_{0,L}(0)
\end{align}
and symmetrically for $\Xi_R$. We then re-cast this into the corresponding constraint on $Q_L, Q_R$ by invoking \eqref{IplusT} in the following manner: 
\begin{align}
&(k = 1) \qquad Q_L''(0) = - L^{\frac23} G_L(0) - T[F_L]''(0),\\
&(k = 2) \qquad Q_L'''''(0) = C_1 \p_{\rho}^3 G_L(0) - L^{\frac23} G_L^{(1)}(0) - \p_{\rho}^2 \slashed{E}_{0,L}(0) - T[F_L]'''''(0)
\end{align}
This process can be generalized for $k$ up to $k_\ast$, To do so, we want to use the inductive relations to rewrite these as functionals acting on $F_L,F_R$ itself. It is convenient to now introduce the operators 
\begin{align}
\mathcal{D}_3 f(\rho) := \frac{\p_\rho^2 f(\rho) - \chi(\rho) \p_\rho^2 f(0)}{\rho}, \qquad \mathcal{D}_1 f(\rho) := \frac{f(\rho) - \chi(\rho) f(0)}{\rho}.
\end{align}
We next observe that inducting on \eqref{iteration:1} produces the expressions 
\begin{align} \label{exp:Xi:k:L}
\Xi^{(k)}_L = & \mathcal{D}_3^k \widehat{F}_L + L^{\frac23} \sum_{k' = 1}^k \mathcal{D}_3^{k'-1}  \mathcal{D}_1 G_L^{(k-k')} + \sum_{k' = 1}^k \mathcal{D}_3^{k'-1} \slashed{E}_{k-k',L} +  \mathcal{D}_3^{k} Q_L +  \mathcal{D}_3^{k} \bold{T}[F_L] \\ \label{exp:Xi:k:R}
\Xi^{(k)}_R = & \mathcal{D}_3^k \widehat{F}_R + L^{\frac23} \sum_{k' = 1}^k \mathcal{D}_3^{k'-1} \mathcal{D}_1 G_R^{(k-k')} + \sum_{k' = 1}^k \mathcal{D}_3^{k'-1} \slashed{E}_{k-k',R} + \mathcal{D}_3^{k} Q_R +  \mathcal{D}_3^{k} \bold{T}[F_R]
\end{align}
We now want to rewrite ($1 \le k \le k_{\ast}$)
\begin{align}
(\Xi^{(k-1)}_L)''(0) = &- L^{\frac23} G^{(k-1)}_L(0) \\
(\Xi^{(k-1)}_R)''(0) = & - L^{\frac23} G^{(k-1)}_R(0)
\end{align}
which, upon evaluating \eqref{exp:Xi:k:L} -- \eqref{exp:Xi:k:R} at $k-1$ gives the following condition on $(Q_L, Q_R)$:
\begin{align} \n
(\mathcal{D}_3^{k-1} Q_L)''(0) = &-L^{\frac23} G^{(k-1)}_L(0) - L^{\frac23} \sum_{k' = 1}^{k-1} ( \mathcal{D}_3^{k'-1}  \mathcal{D}_1 G_L^{(k-1-k')})''(0) \\
&- \sum_{k' = 1}^{k-1} ( \mathcal{D}_3^{k'-1} \slashed{E}_{k-1-k',L})''(0) -  \p_{\rho}^2 \mathcal{D}_3^{k-1} \bold{T}[F_L](0) := \bar{\beta}_2^{(k)}\\ \n
(\mathcal{D}_3^{k-1} Q_R)''(0) = &- L^{\frac23} G^{(k-1)}_R(0)- L^{\frac23} \sum_{k' = 1}^{k-1} ( \mathcal{D}_3^{k'-1}  \mathcal{D}_1 G_R^{(k-1-k')})''(0) \\
&- \sum_{k' = 1}^{k-1} ( \mathcal{D}_3^{k'-1} \slashed{E}_{k-1-k',R})''(0) -  \p_{\rho}^2 \mathcal{D}_3^{k-1} \bold{T}[F_R](0) :=  \beta_2^{(k)}
\end{align}

We now insert \eqref{exp:Xi:k:L} -- \eqref{exp:Xi:k:R} into \eqref{dg:mod:a} -- \eqref{dg:mod:d} to derive intrinsic constraints on $\Xi_L, \Xi_R$. We also need to re-define the functionals appearing on the right-hand sides of \eqref{dg:mod:a} -- \eqref{dg:mod:d}. This yields 
\begin{align}  \label{dg:m2:c}
\ell_1[\mathcal{D}_3^{k-1} \widehat{F}_L, \mathcal{D}_3^{k-1} \widehat{F}_R] = & \beta_1^{(k)} + \alpha_1^{(k)},  \\  \label{dg:m2:d}
\ell_0[\mathcal{D}_3^{k-1} \widehat{F}_L, \mathcal{D}_3^{k-1} \widehat{F}_R] = & \beta_0^{(k)} + \alpha_0^{(k)}, \\
\p_{\rho}^{2 + 3(k-1)} Q_L(0) = & \bar{ \beta}_2^{(k)}, \\
\p_{\rho}^{2 + 3(k-1)} Q_R(0) = &\beta_2^{(k)}
\end{align}
where we redefine the right-hand side functionals as follows:
\begin{align} \n
\beta^{(k)}_0 := & L^{\frac23} \int_0^\infty \Phi^0_L G^{(k)}_L -  L^{\frac23}\int_{-\infty}^0 \Phi^0_R G^{(k)}_R + \int_0^\infty \Phi^0_L \rho \slashed{E}_{k,L} - \int_{-\infty}^0 \Phi^0_R \rho \slashed{E}_{k,R} \\  \n
& - L^{\frac23} \sum_{k'=1}^k \ell_0[ \mathcal{D}_3^{k'-1} \mathcal{D}_1 G_L^{(k-k')}, \mathcal{D}_3^{k'-1} \mathcal{D}_1 G_R^{(k-k')}]  \\ \n
&- \sum_{k'=1}^k \ell_0[ \mathcal{D}_3^{k'-1}  \slashed{E}_L^{(k-k')}, \mathcal{D}_3^{k'-1}  \slashed{E}_R^{(k-k')}], \\  \label{beta:1:1:1}
& - \sum_{k'=1}^k \ell_0[ \mathcal{D}_3^{k'}  Q_L, \mathcal{D}_3^{k'} Q_R] - \sum_{k'=1}^k \ell_0[ \mathcal{D}_3^{k'}   \bold{T}[F_L], \mathcal{D}_3^{k'} \bold{T}[F_R]] \\ \n
\beta^{(k)}_1:= & L^{\frac23} \int_0^\infty \Phi^1_L G^{(k)}_L - L^{\frac23} \int_{-\infty}^0 \Phi^1_R G^{(k)}_R+ \int_0^\infty \Phi^1_L \rho \slashed{E}_{k,L} - \int_{-\infty}^0 \Phi^1_R \rho \slashed{E}_{k,R}  \\ \n
&  - L^{\frac23} \sum_{k'=1}^k \ell_1[ \mathcal{D}_3^{k'-1} \mathcal{D}_1 G_L^{(k-k')}, \mathcal{D}_3^{k'-1} \mathcal{D}_1 G_R^{(k-k')}]  \\ \n 
&- \sum_{k'=1}^k \ell_1[ \mathcal{D}_3^{k'-1}  \slashed{E}_L^{(k-k')}, \mathcal{D}_3^{k'-1}  \slashed{E}_R^{(k-k')}] \\ \label{beta:1:1:2}
& - \sum_{k'=1}^k \ell_1[ \mathcal{D}_3^{k'}  Q_L, \mathcal{D}_3^{k'} Q_R] - \sum_{k'=1}^k \ell_1[ \mathcal{D}_3^{k'}   \bold{T}[F_L], \mathcal{D}_3^{k'} \bold{T}[F_R]]
\end{align}
and $\alpha_0^{(k)}, \alpha_1^{(k)}$ are defined in \eqref{alphak0} -- \eqref{alphak1}.

\subsubsection*{Full Constraint System}

From the above developments, we are able to state our full set of constraints. To do so, we first define $\ell_{-1}$ motivated by \eqref{mc1} as follows: 
\begin{align} \label{mc1}
\ell_{-1}[F_L, F_R] :=  \int_0^1 \frac{F_{R}(L^{-\frac13}(z'-1))}{\overline{u_{FS}}'(1, z')} \ud z'
\end{align} 
We now have the following 
\begin{proposition} Let $1 \le k \le k_\ast$. Suppose 
\begin{align} \label{dg:m4:a}
\p_{\rho}^{2 + 3(k-1)}Q_L(0)= &  \bar{\beta}^{(k)}_2, \\  \label{dg:m4:b}
\p_{\rho}^{2 + 3(k-1)}Q_R(0)= & \beta_2^{(k)}, \\  \label{dg:m4:c}
\ell_1[\mathcal{D}_3^{k-1} \widehat{F}_L, \mathcal{D}_3^{k-1} \widehat{F}_R] = & \beta_1^{(k)} + \alpha_1^{(k)},  \\  \label{dg:m4:d}
\ell_0[\mathcal{D}_3^{k-1} \widehat{F}_L, \mathcal{D}_3^{k-1} \widehat{F}_R] = & \beta_0^{(k)} + \alpha_0^{(k)}, \\ \label{dg:m4:e}
\ell_{-1}[F_L, F_R] = & 0,
\end{align}
where $\alpha^{(k)}_1, \alpha^{(k)}_0$ are defined in \eqref{alphak0} -- \eqref{alphak1}, and $\beta^{(k)}_j$ are defined in \eqref{beta:1:1:1} -- \eqref{beta:1:1:2}. Then there exists a unique $H^{k_\ast}_t$ solution to the system
\begin{subequations}
\begin{align}
&\rho \p_t \Xi - \p_\rho^2 \Xi = L^{\frac23}G \\
&\Xi|_{t = 0} = \Xi_L  = (\bold{I} + \bold{T}) F_L, \qquad \rho > 0 \\
&\Xi|_{t = 1} = \Xi_R  = (\bold{I} + \bold{T}) F_R, \qquad \rho < 0 \\
&\Xi|_{|\rho| \rightarrow \infty} = 0.
\end{align}
\end{subequations}
\end{proposition}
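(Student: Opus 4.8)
The plan is to produce the claimed $H^{k_\ast}_t$ solution by bootstrapping one derivative at a time, iterating the preceding lemma on constraints under the $\slashed{D}$ operator. Let $\Xi = \Xi_0$ be the unique weak solution to \eqref{modelpb1} furnished by the basic existence lemma, so that $\Xi_0 \in C^0((0,1);H^1_\rho)$; the uniqueness asserted in the proposition is therefore inherited at once, and only the regularity requires proof. We induct on $k$ from $0$ to $k_\ast - 1$: given that $\Xi_0$ already possesses well-defined iterates $\Xi_0, \dots, \Xi_k$, each solving \eqref{loc:sys:1:o:xi} (equivalently \eqref{normdom1}) with $\slashed{D}\Xi_j = \Xi_{j+1}$ for $j < k$, we verify the hypotheses of the $\slashed{D}$-constraint lemma at level $k$ and thereby obtain $\Xi_k \in H^1_t$ together with the new iterate $\Xi_{k+1} = \slashed{D}\Xi_k$. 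After $k_\ast$ such steps $\Xi_0$ has $k_\ast$ iterates, each in $H^1_t$; since $\slashed{D}$ differs from $\p_t$ only by the lower-order term $\slashed{E}_k$ of \eqref{defslashE} (built from $\Xi_k$, $u_k$ and fixed coefficients, hence controlled at the same level), this is equivalent to $\Xi_0 \in H^{k_\ast}_t$.

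The base case $k=0$ is where the mass constraint \eqref{dg:m4:e}, namely $\ell_{-1}[F_L, F_R] = 0$ with $\ell_{-1}$ as in \eqref{mc1}, is used: it is the compatibility condition singling out the weak solution whose data matches $(\bold{I}+\bold{T})F_L$ and $(\bold{I}+\bold{T})F_R$ and which lies in $C^0((0,1);H^1_\rho)$. At each level the local constraints \eqref{dg:m4:a}--\eqref{dg:m4:b}, i.e. $\p_\rho^{2+3(k-1)}Q_L(0) = \bar{\beta}^{(k)}_2$ and its right-hand analogue, are exactly what keep the inductive relation \eqref{iteration:1} pointwise well-defined: the numerator $\p_\rho^2 \Xi^{(k)}_L + L^{2/3}G^{(k)}_L$ must vanish at $\rho = 0$ before one divides by $\rho$, and, unwinding \eqref{exp:Xi:k:L}--\eqref{exp:Xi:k:R}, this vanishing is precisely \eqref{dg:m4:a}. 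Consequently the whole tower \eqref{exp:Xi:k:L}--\eqref{exp:Xi:k:R} is well-defined under the standing hypotheses.

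The core of the argument is the inductive step, which is a substitution-and-rearrangement: one checks that the hypotheses \eqref{dg:m4:c}--\eqref{dg:m4:d} are equivalent to the constraints \eqref{dg:mod:c}--\eqref{dg:mod:d} demanded by the $\slashed{D}$-constraint lemma at level $k$ (and \eqref{dg:m4:a}--\eqref{dg:m4:b} to \eqref{dg:mod:a}--\eqref{dg:mod:b}). To do this, insert the closed-form expressions \eqref{exp:Xi:k:L}--\eqref{exp:Xi:k:R} for $\Xi^{(k)}_L, \Xi^{(k)}_R$ into the left-hand sides $\ell_0, \ell_1$ of \eqref{dg:mod:c}--\eqref{dg:mod:d}. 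By linearity of these functionals, the contributions of $\mathcal{D}_3^{k'-1}\mathcal{D}_1 G_L^{(k-k')}$, of $\mathcal{D}_3^{k'-1}\slashed{E}_{k-k',L}$, of $\mathcal{D}_3^{k'}Q_L$ and of $\mathcal{D}_3^{k'}\bold{T}[F_L]$ (together with their right counterparts) are all moved to the right-hand side; collecting them reproduces exactly the redefined functionals $\beta^{(k)}_0 + \alpha^{(k)}_0$ and $\beta^{(k)}_1 + \alpha^{(k)}_1$ of \eqref{beta:1:1:1}--\eqref{beta:1:1:2} and \eqref{alphak0}--\eqref{alphak1}, while what remains on the left is $\ell_j[\mathcal{D}_3^{k-1}\widehat{F}_L, \mathcal{D}_3^{k-1}\widehat{F}_R]$, the left-hand side of \eqref{dg:m4:c}--\eqref{dg:m4:d}. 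The nonlinearity of $\bold{I}+\bold{T}$ enters only through the fixed term $-\sum_{k'}\ell_j[\mathcal{D}_3^{k'}\bold{T}[F_L], \mathcal{D}_3^{k'}\bold{T}[F_R]]$ inside $\beta^{(k)}_j$, which is harmless at the level of this conditional statement since $(F_L,F_R)$ is regarded as given.

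The step I expect to be the main obstacle is not a single estimate but the consistency of all this bookkeeping: proving that the $\mathcal{D}_3$--$\mathcal{D}_1$ recursion of \eqref{iteration:1} telescopes into \eqref{exp:Xi:k:L}--\eqref{exp:Xi:k:R} with exactly the stated combinatorics; that the cutoff $\chi$ built into $\mathcal{D}_3$ and $\mathcal{D}_1$ does not disturb the functionals $\ell_j$ (whose kernels $\p_\rho^2 \Phi^{(j)}_{L,R}$ are paired against $\widehat{F}$, supported away from $\rho=0$, so smoothness of $\Phi^{(j)}$ off the interface suffices); and --- most delicately --- that the prescribed data $(\Xi_L, \Xi_R) = (\bold{I}+\bold{T})(F_L,F_R)$ genuinely lies in the range of $\bold{I}+\bold{T}$, so that the translation between constraints on $(F_L,F_R)$ and constraints on $(\Xi_L,\Xi_R)$ is reversible. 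Finally, the finiteness of every functional appearing in \eqref{dg:mod:a}--\eqref{dg:mod:d} and \eqref{beta:1:1:1}--\eqref{beta:1:1:2} rests on the earlier lemmas, which guarantee that $\p_\rho^2\Phi^{(j)}$ and its $t$-iterates are nontrivial $L^2_\rho$ functions and that the $\Phi^{(j)}$ are smooth away from $\{\rho = 0\}$.
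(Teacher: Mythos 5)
Your proposal is correct and tracks the paper's intended argument: the paper does not write out a separate proof for this proposition — it is implicitly established by the preceding lemma on constraints under the $\slashed{D}$ operator together with the substitution of \eqref{exp:Xi:k:L}--\eqref{exp:Xi:k:R} in the ``Intrinsic Constraints'' paragraph — and your explicit induction on $k$, verifying that \eqref{dg:m4:a}--\eqref{dg:m4:d} reproduce \eqref{dg:mod:a}--\eqref{dg:mod:d} at each level and noting that $\slashed{D}$ differs from $\p_t$ only by the lower-order $\slashed{E}_k$, is precisely the bookkeeping the paper leaves to the reader. Your identification of $\ell_{-1}[F_L,F_R]=0$ as the solvability condition for the base existence, and of \eqref{dg:m4:a}--\eqref{dg:m4:b} as the conditions rendering the $\mathcal{D}_3$-recursion well-defined at $\rho=0$, matches the paper's usage.
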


\section{Proof of Main Result}

We define 
\begin{align}
\ell_j^{(k)}[\widehat{F}_L, \widehat{F}_R] = & \ell_j[\mathcal{D}_3^{k-1} \widehat{F}_L, \mathcal{D}_3^{k-1} \widehat{F}_R], \qquad j = 0, 1, 2, \qquad k = 1, \dots, k_\ast  \\
\bar{\ell}_2^{(k)}[\widehat{F}_L, \widehat{F}_R] =& \bar{\ell}_2[\mathcal{D}_3^{k-1} \widehat{F}_L, \mathcal{D}_3^{k-1} \widehat{F}_R], \qquad k = 1, \dots, k_\ast 
\end{align}
We regard the functionals $\beta_j^{(k)}, \bar{\beta}_2^{(k)}$ as depending on both $[F_L, F_R]$ as well as the solution $\psi$, whereas $\alpha^{(k)}_j$ only depends on the interior solution $\psi$. Therefore, we rewrite \eqref{dg:m4:a} -- \eqref{dg:m4:e} by making this dependence explicit:  
\begin{align} \label{dg:m5:a}
\p_{\rho}^{2 + 3(k-1)}Q_L(0) = &  \bar{\beta}^{(k)}_2[(F_L, F_R), \psi], \\  \label{dg:m5:b}
\p_{\rho}^{2 + 3(k-1)}Q_R(0)= & \beta_2^{(k)}[(F_L, F_R), \psi], \\  \label{dg:m5:c}
\ell_1^{(k)}[\widehat{F}_L, \widehat{ F}_R] = & \beta_1^{(k)}[(F_L, F_R), \psi] + \alpha_1^{(k)}[\psi],  \\  \label{dg:m5:d}
\ell_0^{(k)}[\widehat{F}_L, \widehat{F}_R] = & \beta_0^{(k)}[(F_L, F_R), \psi] + \alpha_0^{(k)}[\psi], \\ \label{dg:m5:e}
\ell_{-1}[\widehat{F}_L, \widehat{F}_R] = & 0,
\end{align}
We now formulate a proposition which will imply our main result by proving estimate \eqref{bbd}. 
\begin{proposition} \label{prop:real} Fix any $(\bar{F}_L, \bar{F}_R) \in \bar{\mathcal{P}}_{k_\ast}$. Let $L << 1$. Then there exist numbers 
\begin{align} \label{A:def:2}
\bold{A}_{k_\ast} := (c_{-1}, c^{(k)}_0, c^{(k)}_1, q_{L, 2 + 3(k-1)}, q_{R, 2 + 3(k-1)}) \text{ for }k = 1, \dots, k_{\ast}
\end{align}
such that data of the type 
\begin{align}
(F_L, F_R) =& (\widehat{F}_L, \widehat{F}_R) + (Q_L, Q_R) \\ \label{data:set:k:F}
(\widehat{F}_L, \widehat{F}_R) =& ( \bar{F}_L, \bar{F}_R) +c_{-1}\bold{e}_{-1} + \sum_{k = 1}^{k_\ast} (c_0^{(k)} \bold{e}^{(k)}_0 + c^{(k)}_1 \bold{e}^{(k)}_1) \\
(Q_L, Q_R) = & (\chi(\rho) \sum_{k = 1}^{k_\ast} q_{L, 2 + 3(k-1)} \rho^{2 + 3(k-1)} , \chi(\rho) \sum_{k = 1}^{k_\ast} q_{R, 2 + 3(k-1)} \rho^{2 + 3(k-1)}  )
\end{align}
satisfy the constraint equations \eqref{dg:m5:a} -- \eqref{dg:m5:e}. The scalars in $\bold{A}_{k_\ast}$ satisfy the bounds 
\begin{align}
|\bold{A}_{k_\ast}|_{\mathbb{R}^{4k_\ast + 1}} \lesssim o_L(1) \| \psi \|_{\mathcal{Z}}.
\end{align}

\end{proposition}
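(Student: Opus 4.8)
The plan is to solve the constraint system \eqref{dg:m5:a}--\eqref{dg:m5:e} for the unknown scalars $\bold{A}_{k_\ast}$ by a fixed-point/linear-algebra argument, treating $\psi$ as given. First I would observe that the constraints decouple by type: \eqref{dg:m5:a}--\eqref{dg:m5:b} are \emph{local} conditions, pinning down the coefficients $q_{L,2+3(k-1)}$, $q_{R,2+3(k-1)}$ of $(Q_L,Q_R)$ directly, since $\p_\rho^{2+3(k-1)}(\chi(\rho)q\,\rho^{2+3(k-1)})|_{\rho=0}$ is a nonzero multiple of $q$; the remaining dependence of $\bar\beta_2^{(k)},\beta_2^{(k)}$ on $(F_L,F_R)$ enters only through $\bold{T}[F_L], \bold{T}[F_R]$ and lower-order $\mathcal{D}_3$-iterates, hence can be absorbed into an iteration in $L$. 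The condition \eqref{dg:m5:e} is a single linear condition $\ell_{-1}=0$, which (since $\ell_{-1}$ is nontrivial, being essentially integration against $1/\overline{u_{FS}}'$) is solved by the coefficient $c_{-1}$ of $\bold{e}_{-1}$, choosing $\bold{e}_{-1}$ with $\ell_{-1}[\bold{e}_{-1}]=1$ and $\bar\ell_2,\ell_2,\ell_1,\ell_0$-neutral on the relevant span. The heart of the matter is the $2k_\ast\times 2k_\ast$ system \eqref{dg:m5:c}--\eqref{dg:m5:d}: inserting the ansatz \eqref{data:set:k:F} and using $\ell_j^{(k)}[\bar F_L,\bar F_R]=0$ for $(\bar F_L,\bar F_R)\in\bar{\mathcal{P}}_{k_\ast}$, the left-hand sides become $\sum_{k'}\big(c_0^{(k')}\ell_j^{(k)}[\bold{e}_0^{(k')}] + c_1^{(k')}\ell_j^{(k)}[\bold{e}_1^{(k')}]\big)$ plus the contributions of $(Q_L,Q_R)$ through $\mathcal{D}_3^{k-1}\widehat F$.

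The key step is to show the $2k_\ast\times 2k_\ast$ matrix $M$ with entries $\ell_j^{(k)}[\bold{e}_{j'}^{(k')}]$, $j,j'\in\{0,1\}$, $1\le k,k'\le k_\ast$, is invertible. This is exactly where the linear-independence lemmas of Section 2 enter. Recall $\ell_j^{(k)}[\widehat F_L,\widehat F_R]=\int_0^\infty\rho\,\p_t^k\Phi^{(j)}_L\,\widehat F_L - \int_0^\infty\rho\,\p_t^k\Phi^{(j)}_R\,\widehat F_R$ (up to the harmless $\mathcal{D}_3^{k-1}$ shift, which is invertible on $C^\infty_c(\mathbb{R}_+)$ away from $\rho=0$). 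By the final lemma of Section 2, the collection $\mathcal{C}_{k_\ast}=\{\p_t^k\Phi^{(0)},\p_t^k\Phi^{(1)}\}_{k=1}^{k_\ast}$ is linearly independent and nontrivial on $\{0\}\times(\eps,\infty)$ and on $\{1\}\times(-\infty,-\eps)$. Hence the $2k_\ast$ linear functionals $(\widehat F_L,\widehat F_R)\mapsto \ell_j^{(k)}[\widehat F_L,\widehat F_R]$ on $\widehat{\mathcal{P}}$ are linearly independent: a vanishing linear combination would give a nontrivial linear combination of $\rho\,\p_t^k\Phi^{(j)}_{L/R}$ annihilating all test functions, forcing it to vanish on the relevant ray, contradicting the lemma. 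Therefore one can choose the $2k_\ast$ spanning pairs $\bold{e}_0^{(k)},\bold{e}_1^{(k)}$ (smooth, compactly supported away from $\rho=0$) biorthogonal to these functionals, i.e.\ $\ell_j^{(k)}[\bold{e}_{j'}^{(k')}]=\delta_{jj'}\delta_{kk'}$, making $M$ the identity; this is the same construction used in the $k_\ast=1$ case (Proposition \ref{prop:k:eq:1}) but now leaning on the full degree-counting argument to handle the union over $k$.

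With $M=\mathrm{Id}$, the system \eqref{dg:m5:c}--\eqref{dg:m5:d} reads schematically $c_j^{(k)} = \beta_j^{(k)}[(F_L,F_R),\psi] + \alpha_j^{(k)}[\psi] - (\text{$Q$-contributions})$, and because $\beta_j^{(k)}$ depends on $(F_L,F_R)$ only in a Lipschitz way carrying a prefactor $L^{2/3}$ (or more generally $o_L(1)$, via the $\mathcal{D}_1 G$, $\slashed E$, $\mathcal{D}_3^{k'}Q$, and $\mathcal{D}_3^{k'}\bold{T}[F]$ terms, each of which is a small Lipschitz perturbation in the sense of the ``Small Lipschitz Perturbations'' discussion), the map $\bold{A}_{k_\ast}\mapsto \bold{A}_{k_\ast}$ obtained by substituting \eqref{data:set:k:F} is a contraction on a small ball of $\mathbb{R}^{4k_\ast+1}$ for $L\ll 1$. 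The unique fixed point gives the required scalars, and tracking the prefactors yields $|\bold{A}_{k_\ast}|\lesssim \sum_{k}(|\alpha_j^{(k)}[\psi]| + |\beta_j^{(k)}|_{\psi\text{-part}}) \lesssim o_L(1)\|\psi\|_{\mathcal{Z}}$, using the bound \eqref{cont:nu} (i.e.\ $|\mu_{k,j}|=|\alpha_j(G_k)|\le o_L(1)\|\Omega\|_{\mathcal{Z}}$) together with the $Z_k$-norm control \eqref{calZ}--\eqref{usec1} to estimate the $\psi$-dependent pieces of $\beta_j^{(k)}$, $\bar\beta_2^{(k)}$.

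\textbf{Main obstacle.} The delicate point is the invertibility of $M$ uniformly in $k_\ast$ — equivalently, ensuring the $2k_\ast$ functionals $\ell_j^{(k)}$ stay linearly independent as $k_\ast$ grows. This is precisely what the ``degree'' bookkeeping ($d^{(0)}, d^{(1)}$) and Lemma \ref{lem:rig:1} are designed to supply, so the proof here is really an application of those results; the remaining work (biorthogonalization, the contraction estimate, and propagating $o_L(1)$ prefactors through the Lipschitz bounds on $\beta_j^{(k)}$) is routine once that structural input is in hand. A secondary technical nuisance is that the operators $\mathcal{D}_3^{k-1}$ and the nonlinear coordinate-change map $\bold{T}$ must be shown to preserve the function classes and the relevant quantitative bounds, but these are bounded operations with the needed smallness built in through powers of $L$.
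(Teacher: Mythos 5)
Your proof proposal is correct and follows essentially the same route as the paper: you reduce the constraint system to a contraction on $\mathbb{R}^{4k_\ast+1}$ after biorthogonalizing the $\bold{e}^{(k)}_j$ against the functionals $\ell^{(k)}_j$, whose linear independence is supplied by the degree-counting lemma of Section 2, and you use Lemmas \ref{lem:2} and \ref{lem:3} (together with \eqref{usec1}) to furnish the $o_L(1)$ smallness that drives the contraction and yields the final bound. The one small slip — describing the $(Q_L,Q_R)$-contributions as entering the left-hand side through $\mathcal{D}_3^{k-1}\widehat F$ — is inconsequential, since in fact $Q$ is not part of $\widehat F$ but appears on the right as a $\mathcal{D}_3^{k'}Q$ term inside $\beta_j^{(k)}$, which your subsequent discussion treats correctly.
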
 

The proposition is a consequence of two lemmas: one regarding the numbers $\alpha_j^{(k)}$, and one regarding the Lipschitz functions $\beta_j^{(k)}, \bar{\beta}_2^{(k)}$. 

%\begin{lemma} \label{lem:1} The linear functionals $\ell_j^{(k)}, \bar{\ell}_2^{(k)}, \ell_{-1}$ for $k = 0, \dots, k_\ast - 1$ and $j = 0, 1, 2$ are nontrivial and linearly independent. Moreover, they can be spanned by basis functions $\bold{e}_{-1}, \bold{e}_0^{(k)}, \bold{e}_1^{(k)}, \bold{e}_2^{(k)}, \bar{\bold{e}}_2^{(k)}$ which are smooth and compactly supported.
%\end{lemma}

\begin{lemma}  \label{lem:2} The numbers $\alpha^{(k)}_j[\psi]$ satisfy the bound 
\begin{align}
|\alpha^{(k)}_j[\psi]| \lesssim o_L(1)  \| \psi \|_{\mathcal{Z}}, \qquad k = 1, \dots, k_\ast . 
\end{align}
\end{lemma}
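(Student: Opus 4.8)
The plan is to estimate the two pieces into which each $\alpha^{(k)}_j[\psi]$ naturally splits according to \eqref{alphak0}--\eqref{alphak1}: a \emph{bulk} piece $-\overline{L}^{\frac23}\int G_{k+1}\Phi^j\,\ud t\,\ud\rho$ and an \emph{interface} piece $\int_0^1 \p_\rho^{\,j}\slashed{E}_k(t,0)\,\ud t$ (with $\p_\rho^{0}=\mathrm{id}$). The key structural observation is that the dual profiles $\Phi^0,\Phi^1$ are fixed --- independent of $L$ and of $\psi$ --- and enjoy enough regularity and decay ($\Phi^j$ smooth away from $\{\rho=0\}$, decaying at $|\rho|=\infty$, with $\p_\rho^2\Phi^j\in L^2$) that their norms are harmless constants; so the whole task reduces to exhibiting, in each piece, a positive power of $L$ times a quantity controlled by $\|\psi\|_{\mathcal{Z}}$ through \eqref{calZ}.

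For the bulk piece I would expand $G_{k+1}$ via \eqref{exp:Zisit}. The two top-order terms $\tau_2\overline{L}^{-\frac13}\rho\,\p_\rho^2\Xi_{k+1}$ and $\tau_1\overline{L}^{-\frac13}\p_\rho\Xi_{k+1}$ carry the singular factor $\overline{L}^{-\frac13}$, so against the prefactor $\overline{L}^{\frac23}$ they leave $O(\overline{L}^{\frac13})$; I would integrate by parts in $\rho$ against $\Phi^j$, splitting the integral at the interface and using the vanishing of the weight $\rho$ at $\rho=0$ (and the compact $\rho$-support of $\Xi_{k+1}=\Omega_{k+1}\chi_I$) to kill the resulting boundary terms, with the one genuinely surviving interface value --- which vanishes for $j=0$ since $\Phi^0_L(0)=0$ --- absorbed by a trace bound, and then close by Cauchy--Schwarz against $\|\Xi_{k+1}\|_{L^2_\rho}$, $\|\p_\rho\Xi_{k+1}\|_{L^2_\rho}$. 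The remaining lower-order terms in $G_{k+1}$ --- $\tau_0\Xi_{k+1}$, $\tau_{-1}\p_z\psi_{k+1}\chi_I$, $\tau_{-2}\psi_{k+1}\chi_I$, the cutoff commutator terms $\delta^{-2}\chi''\Omega_{k+1}$ and $\delta^{-1}\chi'\p_Z\Omega_{k+1}$, and the forcing $\bold{F}_{k+1}\chi_I$ --- retain the full $\overline{L}^{\frac23}$ and are estimated directly by Cauchy--Schwarz or by the $L^\infty$ bounds on $u,\psi$ in \eqref{calZ}. Summing gives the bulk piece $\lesssim(\overline{L}^{\frac13}+\overline{L}^{\frac23})\|\psi\|_{\mathcal{Z}}$.

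For the interface piece I would use \eqref{defslashE}: $\slashed{E}_k$ is a combination of $\Xi_k$ and $u_k$ with coefficients built from $\p_t\bar{w}_z$ and $\p_t(\bar{w}_{zz}/\bar{w}_z)$, cut off near the interface. Since $\p_t=\overline{L}\,\p_s$ and the relevant background quantities and their $s$-derivatives are $O(1)$ there, $\slashed{E}_k=O(\overline{L})(\Xi_k+u_k)$ on the support of the cutoff; evaluating at $\rho=0$ (and, for $\alpha^{(k)}_1$, differentiating once first, which costs nothing extra from the cutoff since $\chi'(0)=0$), I would bound the traces $\Xi_k(t,0)$, $\p_\rho\Xi_k(t,0)$, $u_k(t,0)$, $\p_\rho u_k(t,0)$ by $\mathcal{Z}$-controlled norms --- using the degenerate equation $\p_\rho^2\Xi_k(t,0)=-\overline{L}^{\frac23}G_k(t,0)$ together with a one-dimensional trace/Sobolev inequality compatible with the $\mathcal{Z}$ norm --- and integrate in $t\in(0,1)$, obtaining the interface piece $\lesssim\overline{L}\,\|\psi\|_{\mathcal{Z}}$. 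Together with the bulk estimate this yields $|\alpha^{(k)}_j[\psi]|\lesssim o_L(1)\|\psi\|_{\mathcal{Z}}$.

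The hard part is making the top-order and nonlocal contributions of $G_{k+1}$ genuinely controllable by the $\mathcal{Z}$ norm. For the quasilinear term $\tau_2\overline{L}^{-\frac13}\rho\,\p_\rho^2\Xi_{k+1}$ one must verify with care that the integration by parts against the only piecewise-smooth profile $\Phi^j$ leaves no uncontrolled interface boundary term (this is exactly where the vanishing of $\rho$ at $\rho=0$ and the jump conditions on $\Phi^j$ enter) and that the $\Xi_{k+1}$-norms produced lie within reach of \eqref{calZ}, which is why the weighted $\p_Z^2\Omega$ and $Z\p_Z\Omega_I$ components of the $\mathcal{Z}$ norm are indispensable. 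For the forcing $\bold{F}_{k+1}=\frac{\p_z}{\bar{w}_z}\big(\mathcal{R}_{k+1}/\bar{w}_z\big)$ one has to unwind $\mathcal{R}_{k+1}$ via \eqref{defRKRK} into bilinear combinations of $\Xi$-level unknowns with $u,\psi$ (and their nonlocal, $v$-type, counterparts), for which the $L^\infty_sL^\infty_z$ control of $u$ and $\psi$ in \eqref{calZ} is precisely what closes the estimate. One should also keep track of the mild index shift so that the $\alpha^{(k)}_j$ with $1\le k\le k_\ast$ only ever invoke the forcing and unknowns at levels within the range controlled by \eqref{calZ}.
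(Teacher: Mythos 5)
Your proposal follows essentially the same route as the paper's proof: split $\alpha^{(k)}_j$ into the bulk $L^{\frac23}\int G\,\Phi^j$ piece and the interface $\int \slashed{E}_k(\cdot,0)$ piece, integrate by parts once against the fixed dual profile $\Phi^j$ to neutralize the top-order $\tau_1 \p_Z\Omega$ and quasilinear $\tau_2 Z\p_Z^2\Omega$ contributions (using the vanishing weight at the interface and the $Z\p_Z\Omega$, $L^\infty_Z L^6_s$ components of the $\mathcal{Z}$ norm), and exploit $\p_t = \overline{L}\p_s$ for the $\slashed{E}_k$ piece. The only minor imprecision is in the power-of-$L$ bookkeeping: the change of variables $t\mapsto s$ costs an adverse factor $\overline{L}^{-1/6}$ when passing to the $L^6_s$ norms that $\mathcal{Z}$ controls, so the paper lands on $L^{1/6}$ rather than your $\overline{L}^{1/3}$, though this does not affect the $o_L(1)$ conclusion.
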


\begin{lemma}  \label{lem:3} For $j = 0, 1, 2$ and $k = 1, \dots, k_\ast $, we have 
\begin{align}
\beta_j^{(k)}[(F_L, F_R), \psi] = \gamma_j^{(k)}[F_L, F_R] + \Gamma_j^{(k)}[\psi], 
\end{align}
and the following bounds hold 
\begin{align}
| \gamma_j^{(k)}[F_L, F_R] -  \gamma_j^{(k)}[\bar{F}_L, \bar{F}_R]| \lesssim &o_L(1) \| (F_L - \bar{F}_L, F_R - \bar{F}_R) \|_{\mathcal{P}} \\
|\Gamma^{(k)}_j[\psi]| \lesssim & o_L(1) \| \psi \|_{\mathcal{Z}}.
\end{align}
\end{lemma}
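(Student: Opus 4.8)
The plan is to unwind the definitions \eqref{beta:1:1:1}--\eqref{beta:1:1:2} of $\beta_j^{(k)}$ by substituting the expansions \eqref{exp:Xi:k:L}--\eqref{exp:Xi:k:R} for $\Xi^{(k)}_{L},\Xi^{(k)}_{R}$ and the formula \eqref{exp:Zisit} for $G_k$, and then to sort the resulting terms into those that see only the side data $(F_L,F_R)$ (with $(Q_L,Q_R)$ counted as part of the data) and those that carry the interior solution $\psi$. Each building block splits naturally: $G_k$ decomposes via \eqref{exp:Zisit} into the ``Airy part'' $\tau_2\overline{L}^{-\frac13}\rho\p_\rho^2\Xi_k + \tau_1\overline{L}^{-\frac13}\p_\rho\Xi_k + \tau_0\Xi_k$, which after inserting \eqref{exp:Xi:k:L}--\eqref{exp:Xi:k:R} is, up to the $\eps u$ correction sitting inside the coefficients $\tau_i$, a functional of the data alone, and the ``nonlocal part'' $\tau_{-1}\p_z\psi_k\chi_I + \tau_{-2}\psi_k\chi_I - \delta^{-2}\chi''\Omega_k - 2\delta^{-1}\chi'\p_Z\Omega_k + \bold{F}_k\chi_I$; the error $\slashed{E}_k$ from \eqref{defslashE} splits through its $\Xi_k$-term versus its $u_k$-term; and the map $\bold{T}$ splits through the explicit-data part of \eqref{IplusT} versus the part carried by the scalar $\gamma_0[\psi](0)$ and by the coordinate shift $\rho[F_L] = \bar{u}(1,\cdot) + \eps u[F_L,\psi](1,\cdot)$. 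One then \emph{defines} $\gamma_j^{(k)}[F_L,F_R]$ to be the sum of all data-only contributions and $\Gamma_j^{(k)}[\psi]$ to be the sum of all $\psi$-carrying contributions (which also absorbs the $\eps u$ coefficient corrections), so that the asserted identity holds by construction and all the content lies in the two bounds.

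For the Lipschitz estimate, I would note that $\gamma_j^{(k)}$ is assembled from the data only through the bounded linear operators $\mathcal{D}_1,\mathcal{D}_3$ and their powers up to order $k_\ast$, the bounded linear functionals $\ell_0,\ell_1$ paired against the fixed dual profiles $\Phi^0,\Phi^1$ and their $\rho$-derivatives, and the map $\bold{T}$, which is a compact perturbation of the identity with $O(\eps)$ Lipschitz constant on $\mathcal{P}$ by \eqref{IplusT}. Hence $\gamma_j^{(k)}[F_L,F_R] - \gamma_j^{(k)}[\bar F_L,\bar F_R]$ depends only on $F-\bar F$, and the prefactor $o_L(1)$ is produced by the explicit $\overline{L}^{\frac23}$ attached to every forcing term in \eqref{beta:1:1:1}--\eqref{beta:1:1:2}, which beats the worst $\overline{L}^{-\frac13}$ coming from the $\p_\rho$ and $\rho\p_\rho^2$ factors inside $G_k$ and leaves a net $\overline{L}^{\frac13}$, together with the $\eps$-smallness of $\bold{T}$. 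The point requiring care is that the $k$-fold composition $\mathcal{D}_3^{k}$ entering through the recursion \eqref{exp:Xi:k:L} should not spoil this: each $\mathcal{D}_3$ raises the $\rho$-derivative count by two while dividing by $\rho$, and a careful bookkeeping of these derivative counts and the accompanying powers of $\overline{L}$ keeps the gain favorable for every $k \le k_\ast$.

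For the bound on $\Gamma_j^{(k)}$, every $\psi$-carrying term is an integral of a fixed dual profile (or one of its $\rho$-derivatives, or the kernel of $\ell_j$) against one of $\psi_k\chi_I,\ \p_z\psi_k\chi_I,\ \Omega_k,\ \p_Z\Omega_k,\ \bold{F}_k$, or against the scalar $\gamma_0[\psi](0)$, and each of these is controlled by $\|\psi\|_{\mathcal{Z}}$ via the lower bound \eqref{calZ}; for the forcing $\bold{F}_k = \frac{\p_z}{\bar{w}_z}(\mathcal{R}_k/\bar{w}_z)$ one additionally exploits the explicit product structure of $\mathcal{R}_k$ in \eqref{defRKRK}, estimating its bilinear terms and the $v_k = -\int_0^z\p_s u_k$ factors inside it in the $L^\infty$-type norms collected in $\mathcal{Z}$. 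Carrying the same $\overline{L}^{\frac23}$ prefactors through then yields $|\Gamma_j^{(k)}[\psi]| \lesssim o_L(1)\|\psi\|_{\mathcal{Z}}$. The main obstacle will be exactly this last step: the nonlocal forcing $\bold{F}_k$ through $\mathcal{R}_k$, and the $v_k$ hidden inside it, is where the genuine nonlocality of Prandtl enters, so a soft estimate does not suffice and one must invoke the full strength of the $\mathcal{Z}$-norm uniformly in $k$; a secondary difficulty is keeping track of the powers of $\overline{L}$ through $\mathcal{D}_3^{k}$ so that the $o_L(1)$ gain survives simultaneously for all $1 \le k \le k_\ast$.
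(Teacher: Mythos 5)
The paper's proof is much shorter than you anticipate and it does not unwind \eqref{exp:Zisit} or rebuild the splitting of $G_k$ from scratch. It simply quotes from the companion paper \cite{IM22} a ready-made three-piece decomposition of the boundary traces,
\[
G^{(k)}_{L,R} = G^{(k,d)}_{L,R}[F_L, F_R] + G^{(k,b)}_{L,R}[F_L, F_R] + G^{(k,s)}_{L,R}[\psi],
\]
together with the quantitative bounds: the fast data piece $G^{(k,d)}$ is $O(L^{-1/3}\|(F_L,F_R)\|_{\mathcal P})$, the slow data piece $G^{(k,b)}$ is $O(L^{1/6}\|(F_L,F_R)\|_{\mathcal P})$, and the $\psi$-carrying piece $G^{(k,s)}$ is $o_L(1)\sum_{k'\le k}\|\psi\|_{Z_{k'}}$. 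Pairing these with the explicit $L^{2/3}$ prefactors in \eqref{beta:1:1:1}--\eqref{beta:1:1:2} produces the net $o_L(1)$ and is essentially the entire proof. So the \emph{structure} you describe in your first paragraph, sorting the pieces of $\beta_j^{(k)}$ into data-only and $\psi$-carrying contributions, coincides with what the paper does; but the substantive input is black-boxed from \cite{IM22}, not re-derived.

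Where your proposal diverges is that you propose to establish these inputs yourself, and as written this leaves genuine gaps. First, $\mathcal D_1$ and $\mathcal D_3$ are emphatically \emph{not} bounded linear operators in any na\"ive sense: $\mathcal D_3$ costs two $\rho$-derivatives and divides by $\rho$, so $\mathcal D_3^{k}$ costs $2k$ derivatives and accumulates negative powers of $\rho$; the choice of $Q_{L,R}$ as polynomials of degree $2+3(k-1)$ (cf.\ \eqref{QLQR}) and the high-regularity ambient spaces are precisely what makes this iteration well-defined, and the phrase ``careful bookkeeping'' glosses over a nontrivial derivative/weight count. Second, the splitting you propose for $G_k$ via \eqref{exp:Zisit} is an interior statement, while $\beta_j^{(k)}$ sees the boundary traces $G^{(k)}_{L,R}$; passing from one to the other is exactly where the nonlocality through $\psi_k$ and $v_k=-\int_0^z\p_s u_k$ enters, which you correctly flag as ``the main obstacle'' but do not resolve. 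Third, for $\bold F_k$ through $\mathcal R_k$ in \eqref{defRKRK}, a soft estimate in the $L^\infty$-type quantities collected in \eqref{calZ} does not obviously close uniformly in $k$, which again you note but leave open. In short, your plan is the right shape but deposits all the difficulty in exactly those estimates that the paper does not prove here; to make your version self-contained you would have to rebuild a substantial portion of the estimates from \cite{IM22}.
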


A standard contraction mapping argument then proves Proposition \ref{prop:real}. The rest of the paper is devoted to establishing these three lemmas. 

\begin{proof}[Proof of Lemma \ref{lem:2}] We recall the definitions \eqref{alphak0} -- \eqref{alphak1}. First, we treat the $\slashed{E}_k$ terms. Recalling the definition \eqref{defslashE}, we have the following identity  
\begin{align} \n
\p_{\rho} \slashed{E}_{k-1} =& \overline{L}^{\frac43} \p_Z \{ \frac{\bar{w}_{sz}}{\bar{w}_z} \} \Omega_{k-1,I}+ \overline{L}^{\frac43}  \frac{\bar{w}_{sz}}{\bar{w}_z} \p_Z \Omega_{k-1,I} + \overline{L}^{\frac43} \p_Z \{ \frac{1}{\bar{w}_z^2} \p_s \Big( \frac{\bar{w}_{zz}}{\bar{w}_z} \Big) \} u_{k-1} \chi_I \\
&+ \overline{L}^{\frac43}  \frac{1}{\bar{w}_z^2} \p_s \Big( \frac{\bar{w}_{zz}}{\bar{w}_z} \Big) \p_Z u_{k-1} \chi_I 
\end{align}
after which we estimate 
\begin{align*}
\| \p_{\rho} \slashed{E}^{(n)}_{k-1}(\cdot, 0) \|_{L^1_t} \lesssim &  \overline{L}^{\frac13} (\| \Omega^{(n)}_{k-1,I} \|_{L^\infty_Z L^1_s} + \| \p_Z \Omega^{(n)}_{k-1,I} \|_{L^\infty_Z L^1_s} + \| \p_Z \phi_{k-1} \|_{L^\infty_Z L^1_s}) \lesssim o_L(1) \| \psi^{(n)} \|_{Z_{k-1}}, \\
\| \slashed{E}^{(n)}_{k-1}(\cdot, 0) \|_{L^1_t} \lesssim & \| \Omega^{(n)}_{k-1,I} \|_{L^\infty_Z L^1_s} + \| \p_Z \Phi^{(n)}_{k-1} \|_{L^\infty_Z L^1_s} \lesssim o_L(1) \| \psi^{(n)} \|_{Z_{k-1}}.
\end{align*}
We now turn to the $G_{k+1}$ terms, defined in \eqref{exp:Zisit}. Of these, the most dangerous is the following: 
\begin{align} \n
&L^{\frac23} |\int_{t} \int_{\rho} \tau_1 \Phi^{(j)} \p_Z \Omega_k^{(n)} \ud \rho \ud t| \\ \n
= &L^{\frac13} |\int_{t} \int_{\rho} \tau_1 \Phi^{(j)} \p_{\rho} \Omega_k^{(n)} \ud \rho \ud t| \\ \n
\le& L^{\frac13} |\int_{t} \int_{\rho} \tau_1 \p_\rho \Phi^{(j)}  \Omega_k^{(n)} \ud \rho \ud t| + L^{\frac23}| \int_t \int_\rho \p_Z \tau_1 \Phi^j \Omega_k \ud \rho \ud t | + L^{\frac13} |\int_t \tau_1 \Phi^j \Omega_k \ud t|   \\ \n
\lesssim & L^{\frac13} \| \langle \rho \rangle \p_\rho \Phi^{(j)} \|_{L^2_{t \rho}} \| \Omega_k \|_{L^\infty_Z L^2_t} \\ \n
\lesssim & L^{\frac13} \| \langle \rho \rangle \p_\rho \Phi^{(j)} \|_{L^2_{t \rho}} \| \Omega_k \|_{L^\infty_Z L^6_t} \\ \n
\lesssim & L^{\frac16} \| \langle \rho \rangle \p_\rho \Phi^{(j)} \|_{L^2_{t \rho}} \| \Omega_k \|_{L^\infty_Z L^6_s} \\ \label{sixterm}
\lesssim & L^{\frac16} \| \psi \|_{\mathcal{Z}},  
\end{align}
according to the lower bound \eqref{calZ}. Similarly, we have the term 
\begin{align*}
&L^{\frac23} | \int_t \int_{\rho} \Phi^{(j)} \tau_2 L^{-\frac13} \rho \p_{\rho}^2 \Xi_k \ud t \ud \rho | \\
\lesssim &L^{\frac23} | \int_t \int_{\rho} \p_{\rho} \Phi^{(j)} \tau_2 L^{-\frac13} \rho \p_{\rho} \Xi_k \ud t \ud \rho | + L^{\frac23} | \int_t \int_{\rho} \Phi^{(j)} \tau_2 L^{-\frac13} \p_{\rho} \Xi_k \ud t \ud \rho |  \\
& + L^{\frac23} | \int_t \int_{\rho}  \Phi^{(j)} \p_{\rho}\tau_2 L^{-\frac13} \rho \p_{\rho} \Xi_k \ud t \ud \rho | = I_1 + I_2 + I_3.
\end{align*}
While $I_2, I_3$ are treated identically to \eqref{sixterm}, we estimate $I_1$ as follows: 
\begin{align*}
I_1 \lesssim &L^{\frac13} \| \langle \rho \rangle \p_\rho \Phi^{(j)} \|_{L^2_{t \rho}} \| \rho \p_{\rho} \Xi_k \|_{L^2_{t\rho}} \\
\lesssim& L^{\frac13} \| \langle \rho \rangle \p_\rho \Phi^{(j)} \|_{L^2_{t \rho}} \| Z \p_{Z} \Omega_k \|_{L^2_{t\rho}}  \\
\lesssim& L^{\frac13} \| \langle \rho \rangle \p_\rho \Phi^{(j)} \|_{L^2_{t \rho}} \| Z \p_{Z} \Omega_k \|_{L^\infty_{t} L^2_{\rho}} \\
 \lesssim& L^{\frac16} \| \langle \rho \rangle \p_\rho \Phi^{(j)} \|_{L^2_{t \rho}} \| Z \p_{Z} \Omega_k \|_{L^\infty_{s} L^2_{Z}} \\
 \lesssim & L^{\frac16} \| \psi \|_{\mathcal{Z}}, 
\end{align*}
again according to the lower bound, \eqref{calZ}. The remaining terms in $G_{k+1}$ are strictly simpler, and we omit repeating these details. 
\end{proof}

\begin{proof}[Proof of Lemma \ref{lem:3}] We first recall the following bounds, which were established in \cite{IM22}. There is a decomposition 
\begin{align}
G^{(k)}_{L} =& G^{(k, d)}_{L}[F_L, F_R] + G^{(k,b)}_{L}[F_L, F_R] + G^{(k,s)}_{L}[\psi], \\
 G^{(k)}_{R} =& G^{(k, d)}_{R}[F_L, F_R] + G^{(k,b)}_{R}[F_L, F_R] + G^{(k,s)}_{R}[\psi]
\end{align}
where the following bounds hold 
\begin{align}
\| \langle \rho \rangle^{M_m} \p_\rho^m G^{(k,d)}_{L,R}[F_L, F_R] \|_\infty \lesssim & L^{- \frac13} \| (F_L, F_R) \|_{\mathcal{P}} \\
\|  \langle z \rangle^{M_m} \p_z^m G^{(k, b)}_{L,R}[F_L, F_R] \|_{L^\infty} \lesssim & L^{\frac16} \| (F_L, F_R) \|_{\mathcal{P}}, \\
\|  \langle z \rangle^{M_m} \p_z^m G^{(k, s)}_{L,R}[\psi] \|_{L^\infty} \lesssim& o_L(1)  \sum_{k' = 0}^k \| \psi \|_{Z_k}.
\end{align}
Pairing these bounds with the definitions \eqref{beta:1:1:1} -- \eqref{beta:1:1:2} gives the desired result. 
\end{proof}

\begin{proof}[Proof of Theorem \ref{thm:twomain}] The proof essentially follows from the bounds \eqref{abd} -- \eqref{bbd}, which have now been established thanks to Proposition \ref{prop:real}. We introduce the following iteration for $0 \le k \le k_{\ast} -1$: 
\begin{subequations} \label{it:it:1} 
\begin{align}
&\rho \p_t \Xi^{(n+1)}_k - \p_\rho^2 \Xi^{(n+1)}_k = \overline{L}^{\frac23} G^{(n)}_k, \qquad (t, \rho) \in (0, 1) \times \mathbb{R} \\
&\Xi_k^{(n+1)}(t, \pm \infty) = 0,  \qquad t \in (0, 1) \\
&\Xi^{(n+1)}_k(0, \rho) := \Xi^{(k, n+1)}_{Left}(\rho), \qquad 0 < \rho < \infty \\
& \Xi^{(n+1)}_k(1, \rho) := \Xi^{(k, n+1)}_{Right}(\rho), \qquad - \infty < \rho < 0.
\end{align}
\end{subequations}
The source term $G_k^{(n)}$ is given as follows 
\begin{align} \n
G^{(n)}_k(t, \rho) = & \Big( \tau^{(n)}_2 \overline{L}^{-\frac13} \rho \p_\rho^2 \Xi^{(n)}_{k} +  \tau^{(n)}_1 \overline{L}^{-\frac13} \p_\rho \Xi^{(n)}_k  + \tau^{(n)}_0 \Xi^{(n)}_k + \tau^{(n)}_{-1} \p_z \psi^{(n)}_k \chi_I + \tau^{(n)}_{-2} \psi^{(n)}_k \chi_I \Big) \\ \label{exp:Zisit:4it}
 & - \frac{ \chi''( \frac{\overline{L}^{\frac13}\rho}{\delta} )}{\delta^2} \Omega^{(n)}_k - \frac{2}{\delta} \chi'\Big( \frac{\overline{L}^{\frac13}\rho}{\delta} \Big) \p_Z \Omega^{(n)}_k +\bold{F}^{(n)}_k \chi_I.
\end{align}
The iteration is initialized by setting 
\begin{align}
\Xi_k^{(0)} = 0, \qquad \psi_k^{(0)} = 0, \qquad  \Omega^{(0)}_k = 0 \qquad G_k^{(0)} = 0, \qquad \bold{F}^{(0)}_k = 0.  
\end{align}
We subsequently solve \eqref{it:it:1} to obtain $\Xi^{(n+1)}$. Since $G^{(n)}$ depends not only on $\Xi^{(n+1)}$, we need to define $\psi^{(n+1)}$ and $\Omega^{(n+1)}$. Over the course of the iteration, we will relax the constraint that $\Xi = \Omega \chi_I$. That is, we will not enforce $\Xi^{(n+1)} = \Omega^{(n+1)} \chi_I$. Instead, we will define $\Omega^{(n+1)}$ as a different object: it is derived from solving the stream function equation, with the data $[\psi^{(k, n+1)}_{Left}, \psi^{(k, n+1)}_{Right}]$. That is, consider the solution to the full Prandtl system \eqref{ukstruc1}
\begin{align}
& \bar{w} \p_s u - \p_z \bar{w} \p_s \psi + \mathcal{A}[\overline{\psi}, \psi] - \p_z^2 u = 0, \qquad (s, z) \in (0, 1) \times (0, 1) \\
& u|_{z = 0} = 0, \qquad \omega_g|_{z = 0} = \Xi^{(n+1)}|_{Z = 0}, \\
& \psi|_{s = 1} = \psi^{(k, n+1)}_{Left}(z), \qquad 0 < z < 1,
\end{align}
and analogously for the $1 < z < \infty$ domain. This will define $\psi^{(n+1)}$ on the tangential interior $0 < s < 1$, and is a smooth function away from $z = 1$.  Importantly, this structure ensures that $\psi^{(n+1)}$ can in turn be controlled by re-applying the energy analysis from \cite{IM22}.

We now need to define the map that takes us from $F^{(n+1)}_{Left}, F^{(n+1)}_{Right}$ to $\Xi^{(n+1)}_{Left}, \Xi^{(n+1)}_{Right}$, in a similar manner to \eqref{IplusT}. This map is not explicit, but is well defined. Indeed, we first define the coordinate shift via 
\begin{align}
\rho = \rho[F_{Left}^{(n)}](y) = \bar{u}(1, y) + \eps u[F^{(n)}_{Left}, \psi^{(n)}](1, y).
\end{align}
Above, the function $u$ is given by the integral formula 
\begin{align}
u^{(n)}_{Left} := u[F^{(n)}_{Left}](1, y) = \gamma_0^{(0, n)}[\psi^{(n)}](0) \bar{u}_z + \bar{u}_z \int_1^z \frac{F^{(n)}_{Left}(\eta(z'))}{\bar{u}_z} \ud z'
\end{align}
Finally, we define 
\begin{align}
\bold{T}^{(n)} := F^{(n+1)}_{Left} \mapsto \Xi^{(n+1)}_{Left}
\end{align}
through the map 
\begin{align}
\Xi^{(n+1)}_{Left}(\rho[F^{(n)}_{Left}](\eta)) = F^{(n+1)}_{Left}(\eta) - \eps \Big( \frac{\bar{u}_z \p_z^2 u^{(n)}_{Left} - \bar{u}_{zz} \p_z u^{(n)}_{Left}}{\bar{u}_z (\bar{u}_z + \p_z u^{(n)}_{Left})} \Big) u^{(n)}_{Left}
\end{align}
We note that prescribing $F^{(n+1)}_{Left} \in \mathcal{P}$ at each step of the iteration and then determining $\Xi^{(n+1)}_{Left}$ through the nonlinear map $\bold{T}$ ensures that $\Xi^{(n+1)}_{Left}$ is in the range of $\bold{T}$. This property is important to preserve, as it will ensure that we can invert the map from $\Xi_{Left}$ to $F_{Left}$. 

Due to the expressions \eqref{exp:Xi:k:L} -- \eqref{exp:Xi:k:R}, we regard $\Xi^{(k, n)}_{L}, \Xi^{(k, n)}_{R}$ as functionals of the given data for the $k$ prior iterates. Moreover, we subsequently view each quantity below also as functionals of these given data: 
\begin{subequations}
\begin{align} \label{ttu1}
\Xi^{(k, n)}_{Left} =& \widetilde{\Xi}^{(k)}_{Left}[ \Xi^{(0,n)}_{Left}, \Xi^{(0,n-1)}_{Left}, \dots, \Xi^{(0,n-k)}_{Left}, \psi^{(n-1)}, \dots, \psi^{(n-k)}], \\ \label{ttu2}
G^{(k, n)}_{Left} = & \widetilde{G}^{(k)}_{Left}[ \Xi^{(0,n)}_{Left}, \Xi^{(0,n-1)}_{Left}, \dots, \Xi^{(0,n-k)}_{Left}, \psi^{(n)}, \dots, \psi^{(n-k)}], \\  \label{ttu3}
\psi^{(k, n)}_{Left} = & \widetilde{\psi}^{(k)}_{Left}[\Xi^{(0,n)}_{Left}, \Xi^{(0,n-1)}_{Left}, \dots, \Xi^{(0,n-k)}_{Left}, \psi^{(n)}, \dots, \psi^{(n-k)}], \\ \label{ttu4}
\zeta^{(k, n)}_{Left} = & \widetilde{\zeta}^{(k)}_{Left}[\Xi^{(0,n)}_{Left}, \Xi^{(0,n-1)}_{Left}, \dots, \Xi^{(0,n-k)}_{Left}, \psi^{(n)}, \dots, \psi^{(n-k)}].
\end{align}
\end{subequations}

We will prescribe data of the type
\begin{align}
(F^{(n+1)}_L, F^{(n+1)}_R) =& (\widehat{F}^{(n+1)}_L, \widehat{F}^{(n+1)}_R) + (Q^{(n+1)}_L, Q^{(n+1)}_R) \\ \label{data:set:k:F}
(\widehat{F}^{(n+1)}_L, \widehat{F}^{(n+1)}_R) =& ( \bar{F}_L, \bar{F}_R) +c^{(n+1)}_{-1}\bold{e}_{-1} + \sum_{k = 1}^{k_\ast} (c_0^{(k, n+1)} \bold{e}^{(k)}_0 + c^{(k, n+1)}_1 \bold{e}^{(k)}_1) \\
(Q^{(n+1)}_L, Q^{(n+1)}_R) = & (\chi(\rho) \sum_{k = 1}^{k_\ast} q^{(n+1)}_{L, 2 + 3(k-1)} \rho^{2 + 3(k-1)} , \chi(\rho) \sum_{k = 1}^{k_\ast} q^{(n+1)}_{R, 2 + 3(k-1)} \rho^{2 + 3(k-1)}  )
\end{align}
Importantly, the basis functions $\bold{e}_0^{(k)}, \bold{e}_1^{(k)}, \bold{e}_{-1}$ are remaining fixed through the iteration; only the projections 
\begin{align*}
\bold{A}_{k_\ast}^{(n+1)} := (c^{(n+1)}_{-1}, c^{(k, n+1)}_0, c^{(k, n+1)}_1, q^{(n+1)}_{L, 2 + 3(k-1)}, q^{(n+1)}_{R, 2 + 3(k-1)}) \text{ for }k = 1, \dots, k_{\ast}
\end{align*}
are updating (due to the update in the source term $G^{(n)}$). 

 By re-applying Proposition \ref{prop:real}, we obtain the bounds 
\begin{align}
|\bold{A}_{k_\ast}^{(n+1)}|_{\mathbb{R}^{4k_\ast + 1}} \lesssim & o_L(1) \sum_{k = 0}^{k_\ast} \| \psi^{(n - k)} \|_{\mathcal{Z}} +  o_L(1) \sum_{k = 0}^{k_\ast} | \bold{A}_{k_\ast}^{(n - k)} |_{\mathbb{R}^{4k_\ast + 1}}, \\ \n
\| \psi^{(n+1)} \|_{\mathcal{Z}} \lesssim & \| F^{(n+1)}_{Left}, F^{(n+1)}_{Right} \|_{\mathcal{P}} + o_L(1) \Big( \sum_{k' = 0}^{k_{\ast}} \| F^{(n-k')}_{Left}, F^{(n-k')}_{Right} \|_{\mathcal{P}} +  \sum_{k' = 0}^{k_{\ast}} \| \psi^{(n-k')} \|_{Z_{k_{\ast}}}  ) \\
\lesssim &  \| \bar{F}_{Left}, \bar{F}_{Right} \|_{\bar{\mathcal{P}}_{k_\ast}} + o_L(1) \sum_{k = 0}^{k_\ast+1} |\bold{A}_{k_\ast}^{(n+1 - k)}|_{\mathbb{R}^{4k_\ast + 1}} +  o_L(1) \sum_{k' = 0}^{k_{\ast}} \| \psi^{(n-k')} \|_{Z_{k_{\ast}}}.
\end{align}
The dependance of the above bounds on the $k_\ast$ prior iterates (as opposed to just the previous iterate) is due to the ``lagging" structure of the interdependencies, \eqref{ttu1} -- \eqref{ttu4}. By taking differences, it is clear that the sequence $\{ \bold{A}_{k_\ast}^{(n)}, \psi^{(n)} \}_{n = 0}^\infty$ converges to $\{ \bold{A}_{k_\ast}, \psi \}_{n = 0}^\infty$, as required. 
\end{proof}

We now address the case of $L$ not necessarily small. 

\begin{proposition} \label{prop:real} Fix any $(\bar{F}_L, \bar{F}_R) \in \bar{\mathcal{P}}_{k_\ast}$. There exists a discrete set $\{L_i \}_{i = 1}^N$ such that if $L \neq L_i$, then there exist numbers 
\begin{align} \label{A:def:2}
\bold{A}_{k_\ast} := (c_{-1}, c^{(k)}_0, c^{(k)}_1, q_{L, 2 + 3(k-1)}, q_{R, 2 + 3(k-1)}) \text{ for }k = 1, \dots, k_{\ast}
\end{align}
such that data of the type 
\begin{align}
(F_L, F_R) =& (\widehat{F}_L, \widehat{F}_R) + (Q_L, Q_R) \\ \label{data:set:k:F}
(\widehat{F}_L, \widehat{F}_R) =& ( \bar{F}_L, \bar{F}_R) +c_{-1}\bold{e}_{-1} + \sum_{k = 1}^{k_\ast} (c_0^{(k)} \bold{e}^{(k)}_0 + c^{(k)}_1 \bold{e}^{(k)}_1) \\
(Q_L, Q_R) = & (\chi(\rho) \sum_{k = 1}^{k_\ast} q_{L, 2 + 3(k-1)} \rho^{2 + 3(k-1)} , \chi(\rho) \sum_{k = 1}^{k_\ast} q_{R, 2 + 3(k-1)} \rho^{2 + 3(k-1)}  )
\end{align}
satisfy the constraint equations \eqref{dg:m5:a} -- \eqref{dg:m5:e}. The scalars in $\bold{A}_{k_\ast}$ satisfy the bounds 
\begin{align}
|\bold{A}_{k_\ast}|_{\mathbb{R}^{4k_\ast + 1}} \lesssim  \| \bar{F}_L, \bar{F}_R \|_{\bar{\mathcal{P}}_{k_\ast}}.
\end{align}

\end{proposition}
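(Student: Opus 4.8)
The plan is to replace the small-$L$ contraction argument by the analytic Fredholm alternative, using the already-proven case $L\ll1$ as the input that rules out total degeneracy. First I would reformulate, exactly as in the proof of Theorem \ref{thm:twomain}, the problem of solving \eqref{dg:m5:a}--\eqref{dg:m5:e} simultaneously with the Airy system \eqref{loc:sys:1:o:xi} and the interior stream-function equations as that of finding a fixed point of an affine map on the Banach space $\mathcal{X}:=\mathbb{R}^{4k_\ast+1}\times\mathcal{Z}$, whose first factor records the scalars $\bold{A}_{k_\ast}$ and whose second factor records the interior solution $\psi$. Writing this map as $x\mapsto b_L+\mathcal{K}_L x$, the inhomogeneity $b_L$ depends linearly and boundedly on $(\bar F_L,\bar F_R)\in\bar{\mathcal{P}}_{k_\ast}$ (its $\psi$-component using the interior estimate \eqref{usec1}), while $\mathcal{K}_L$ is linear and, I claim, compact on $\mathcal{X}$: its finite-dimensional block is automatically compact; the $\mathcal{Z}$-valued block inherits compactness from the parabolic gain of regularity in the interior Prandtl solve — the same gain exploited in \cite{IM22} — which maps into a space compactly embedded in $\mathcal{Z}$, the nonlocal term $v_R\p_y u_{FS}$ being lower order in this count; and the blocks feeding $\psi$ back into the scalar equations factor through the finite-rank functionals $\ell_j^{(k)},\beta_j^{(k)},\alpha_j^{(k)}$, which extract finitely many weighted moments of smooth functions. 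Thus $\mathrm{Id}-\mathcal{K}_L$ is a compact perturbation of the identity — the source of the corresponding statement for $\mathcal{N}_{side}$ in Theorems \ref{thm:twomain} and \ref{thmex}.

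Next I would verify that $L\mapsto\mathcal{K}_L$ is an operator-norm-analytic family on $(0,L_{Max})$: every ingredient depends real-analytically on $L$ — the rescaled length $\bar L=\int_1^{1+L}\Lambda^{-2}$, the dual profiles $\Phi^{(0)},\Phi^{(1)}$ (solutions of the linear adjoint Airy problem, whose coefficients vary analytically), the operators $\mathcal{D}_1,\mathcal{D}_3$, the coefficients $\tau_i,\underline\tau_i,\bold{c}_i$, and the coordinate/unknown change $\bold{I}+\bold{T}$; for $\eps$ small the $\eps$-dependent free-boundary corrections enter only as additional analytic perturbations. Combined with the $L\ll1$ case, which provides invertibility of $\mathrm{Id}-\mathcal{K}_L$ for all sufficiently small $L$, the analytic Fredholm theorem gives the dichotomy: either $\mathrm{Id}-\mathcal{K}_L$ fails to be invertible for every $L\in(0,L_{Max})$ — excluded by the small-$L$ case — or it is invertible for all $L$ outside a discrete set $\{L_i\}_{i=1}^N$, finite in each compact subinterval, with $(\mathrm{Id}-\mathcal{K}_L)^{-1}$ analytic away from $\{L_i\}$. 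This exceptional set is intrinsic to $\mathcal{K}_L$ and, in particular, independent of $(\bar F_L,\bar F_R)$.

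Finally, for $L\notin\{L_i\}$ I would set $x=(\bold{A}_{k_\ast},\psi):=(\mathrm{Id}-\mathcal{K}_L)^{-1}b_L$. Unravelling the reformulation shows that the data \eqref{data:set:k:F} built from these scalars satisfies \eqref{dg:m5:a}--\eqref{dg:m5:e} and yields the $H^{k_\ast}_t$ solution, while $|\bold{A}_{k_\ast}|_{\mathbb{R}^{4k_\ast+1}}\le\|x\|_{\mathcal{X}}\le\|(\mathrm{Id}-\mathcal{K}_L)^{-1}\|\,\|b_L\|_{\mathcal{X}}\lesssim\|(\bar F_L,\bar F_R)\|_{\bar{\mathcal{P}}_{k_\ast}}$, the implicit constant depending on $L$ and blowing up as $L\to L_i$ — exactly the asserted estimate.

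The hard part will be the first two steps: showing the $\psi$-block of $\mathcal{K}_L$ is genuinely compact despite the mixed-type, nonlocal character of the interior Prandtl equations — one must route the compactness through the derivative gain and compact embeddings rather than through any would-be smallness — and showing that $L\mapsto\mathcal{K}_L$ is truly real-analytic, not merely smooth, uniformly on compact subintervals, which is what upgrades the exceptional set from closed to discrete. The Section~2 linear-independence results for $\mathcal{C}_{k_\ast}=\{\p_t^k\Phi^{(0)},\p_t^k\Phi^{(1)}\}_{k=1}^{k_\ast}$ enter precisely here: they are what make the small-$L$ case work, and hence what excludes the degenerate branch of the Fredholm dichotomy.
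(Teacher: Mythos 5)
Your plan shares the paper's core mechanism---analytic dependence on $L$ forcing a discrete exceptional set---but it is pitched at the wrong level of generality, and this introduces a genuine gap that the paper's actual argument sidesteps entirely. The paper does not set the problem up on the infinite-dimensional product $\mathbb{R}^{4k_\ast+1}\times\mathcal{Z}$ and does not invoke the analytic Fredholm alternative. Instead it observes that, given $\bold{A}_{k_\ast}$, the data $(F_L,F_R)$ is determined, and the stability estimate \eqref{lab1} from \cite{IM22} then delivers $\psi$ with a bound; composing with the functionals $\alpha_j^{(k)},\Gamma_j^{(k)},\gamma_j^{(k)}$ returns to $\mathbb{R}^{4k_\ast+1}$. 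This ``solves out'' $\psi$ and reduces the constraint system \eqref{dg:m5:a}--\eqref{dg:m5:e} to a $(4k_\ast+1)\times(4k_\ast+1)$ affine system $\bold{A}_{k_\ast}-\bold{M}_L\bold{A}_{k_\ast}=g(\bar F_L,\bar F_R)$ on $\mathbb{R}^{4k_\ast+1}$. The exceptional set is then literally the zero set of the analytic scalar $\det(\mathbb{I}-\bold{M}_L)$ on the compact interval $[L_0,L_{Max}]$, which is finite unless the determinant vanishes identically (ruled out, as you also note, by the small-$L$ case). No compactness in $\mathcal{Z}$ is ever needed.

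The gap in your proposal is precisely the claim that the $\mathcal{Z}$-block of $\mathcal{K}_L$ is compact ``from the parabolic gain of regularity in the interior Prandtl solve.'' That assertion is not available here. The interior problem is of mixed type---forward-parabolic for $u_P>0$, backward-parabolic for $u_P<0$---so there is no global parabolic smoothing estimate you can lean on, and the nonlocal term $v_R\p_y u_{FS}$ is far from ``lower order in this count'': it is exactly the term that prevents a naive parabolic treatment and forced \cite{IM22} to develop the codimension-two decomposition and to excise the resonant $L$-values in the first place. Establishing compactness of the solution map in the $\mathcal{Z}$-topology would essentially require redoing all of \cite{IM22} with a derivative gain, which is neither done there nor needed. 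Your route would become correct only after that compactness were proved, and at that point it would be strictly more work than the paper's finite-dimensional reduction for the same conclusion. The lesson worth extracting is that the a-priori stability bound \eqref{lab1} is strong enough to eliminate the infinite-dimensional unknown outright, so the Fredholm machinery can be collapsed to a determinant.
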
 
\begin{proof} We first recall from \cite{IM22} the following stability bound: 
\begin{align} \label{lab1}
\| \psi\|_{Z_{k_{\ast}}} \lesssim & \| F_{Left}, F_{Right} \|_{\mathcal{P}}.
\end{align} 
We now need to address the constraint equations. We define the vector $\bold{A}_{k_\ast}$ as before. We subsequently view the following as operators $\mathbb{R}^{4k_{\ast}+1} \mapsto \mathbb{R}$.  
\begin{align}
\alpha_j^{(k)} = \alpha_j^{(k)}(\bold{A}_{k_\ast}; L), \qquad \Gamma_{j}^{(k)} = \Gamma_{j}^{(k)}(\bold{A}_{k_\ast}, L).
\end{align}
We thus derive the following affine system for $\vec{\mathcal{A}}$:
\begin{align}
\bold{A}_{k_\ast} - \bold{M}_L \bold{A}_{k_\ast} =  g(\overline{F}_{Left},\overline{F}_{Right}),
\end{align} 
where the operator $\bold{M}_L = M_L + B_L$, and $M_L$ is the linear operator that takes $\bold{A}_{k_\ast} \mapsto \psi \mapsto \alpha_j^{(k)}(\bold{A}_{k_\ast}; L) +  \Gamma_{j}^{(k)}(\bold{A}_{k_\ast}, L)$ for $1 \le j \le 4$ and $1 \le k \le k_\ast$, and $B_L$ is the linear operator that takes $\bold{A}_{k_\ast}$ and outputs the numbers $\gamma(\cdot, L)$ in \eqref{dg:m5:a}. Through the analytic dependence of $\bold{M}_L$ on $L$, for $L \ge L_0$ (due to the rescaling $F_{Left}(\frac{Z}{L^{\frac13}}), F_{Right}(\frac{Z}{L^{\frac13}})$), we see that by excising finitely many values of $L$, we can obtain 
\begin{align} \label{Abd}
\bold{A}_{k_\ast} = (\mathbb{I} - \bold{M}_L)^{-1} g(\overline{F}_{Left}, \overline{F}_{Right}), 
\end{align}
which shows that $|\bold{A}_{k_\ast}| \lesssim \| \overline{F}_{Left}, \overline{F}_{Right} \|_{\bar{\mathcal{P}}_{k_\ast}}$. Therefore, combining \eqref{lab1} with \eqref{Abd} gives the bounds 
\begin{align}
|\bold{A}_{k_\ast}| + \| \psi\|_{Z_{k_{\ast}}} \lesssim & \| \overline{F}_{Left}, \overline{F}_{Right} \|_{\bar{\mathcal{P}}_{k_\ast}}.
\end{align} 
\end{proof}

\noindent \textbf{Acknowledgements:} S.I is grateful for the hospitality and inspiring work atmosphere at NYU Abu Dhabi and Courant Institute, NYU where part of this work took place. The work of S.I is partially supported by a UC Davis startup grant. The work of N.M. is supported by NSF grant DMS-1716466 and by Tamkeen under the NYU Abu Dhabi Research Institute grant of the center SITE. The authors would also like to thank A-L. Dalibard and F. Marbach for helpful discussions about the work \cite{DMR} and for pointing out the requirement for the construction argument implemented in this work.

\def\bibindent{3.5em}

\end{document}